\numberwithin{equation}{section}
\theoremstyle{plain}
\newtheorem{thm}[equation]{Theorem}
\newtheorem{cor}[equation]{Corollary}
\newtheorem{lem}[equation]{Lemma}
\newtheorem{prop}[equation]{Proposition}
\newtheorem{definition}[equation]{Definition}
\theoremstyle{definition}
\theoremstyle{remark}
\DeclareMathOperator{\Br}{Br}
\DeclareMathOperator{\codim}{codim}
\DeclareMathOperator{\Div}{Div}
\DeclareMathOperator{\Gal}{Gal}
\DeclareMathOperator{\im}{Im}
\DeclareMathOperator{\Pic}{Pic}
\DeclareMathOperator{\rank}{rank}
\DeclareMathOperator{\Res}{Res}
\def\Br{{\rm Br}}
\def\inv{{\rm inv}}
\def\hom{{\rm Hom}}
\def\Gal{{\rm Gal}}
\def\ker {{\rm  Ker}}
\def\hom{{\rm Hom}}
\def\Pic{{\rm Pic}}
\def\ker{{\rm ker}}
   \newcommand{\textcyr}[1]{%
     {\fontencoding{OT2}\fontfamily{wncyr}\fontseries{m}\fontshape{n}%
      \selectfont #1}}
\newcommand{\Sha}{{\mbox{\textcyr{Sh}}}}
\DeclareFontFamily{U}{wncy}{}
\DeclareFontShape{U}{wncy}{m}{n}{%
<5>wncyr5%
<6>wncyr6%
<7>wncyr7%
<8>wncyr8%
<9>wncyr9%
<10>wncyr10%
<11>wncyr10%
<12>wncyr6%
<14>wncyr7%
<17>wncyr8%
<20>wncyr10%
<25>wncyr10}{}
\DeclareMathAlphabet{\cyr}{U}{wncy}{m}{n}
\begin{document}

\title[]
{Strong approximation with Brauer-Manin obstruction for groupic varieties}

\author{Yang CAO}

\address{Yang CAO \newline Laboratoire de Math\'ematiques d'Orsay
\newline Univ. Paris-Sud, CNRS, Univ. Paris-Saclay \newline 91405 Orsay, France}

\email{yang.cao@math.u-psud.fr}

\author{Fei XU}

\address{Fei XU \newline School of Mathematical Sciences, \newline Capital Normal University,
\newline 105 Xisanhuanbeilu, \newline 100048 Beijing, China}

\email{xufei@math.ac.cn}

\thanks{\textit{Key words} : linear algebraic group, smooth compactification, strong approximation, torsor,
Brauer\textendash Manin obstruction}

\date{\today.}



\maketitle

\begin{abstract} Strong approximation with Brauer-Manin obstruction is established for smooth varieties containing  a connected linear algebraic group as an open subset with a compatible action.
\end{abstract}

\tableofcontents

\section{Introduction}

Classical strong approximation for semi-simple simply connected linear algebraic groups has been established by Eichler in \cite{Eichler1}-\cite{Eichler2}, Weil in \cite{Weil}, Shimura in \cite{Shimura}, Kneser in \cite{Kneser}, Platonov in \cite{Platonov1}-\cite{Platonov2}, Prasad in \cite{Prasad} and so on from thirties to seventies of last century. Min\v{c}hev in \cite{Mincev} pointed out that classical strong approximation is not true for varieties which are not simply connected. Colliot-Th\'el\`ene and the second named author in \cite{CTX} first suggested that one should study strong approximation with Brauer-Manin obstruction which generalizes classical strong approximation by using Manin's idea and established strong approximation with Brauer-Manin obstruction for homogenous spaces of semi-simple linear algebraic groups with application to integral points. Since then, Harari in \cite{Ha08} proved strong approximation with Brauer-Manin obstruction for tori and Demarche in \cite{D} extended Harari's result to connected linear algebraic groups and Wei and the second named author in \cite{WX} extended Harari's result to groups of multiplicative type.  Borovoi and Demarche in \cite{BD} established strong approximation with Brauer-Manin obstruction for homogenous spaces of connected linear algebraic groups with connected stabilizers. Colliot-Th\'el\`ene and the second named author in \cite{CTX1} proved strong approximation with Brauer-Manin obstruction for certain families of quadratic forms and  Colliot-Th\'el\`ene and Harari in \cite{CTH} extended this result to certain families of homogenous spaces of linear algebraic groups.

 In our previous paper \cite{CX}, strong approximation with Brauer-Manin obstruction has been established for open toric varieties. It is natural to ask whether such a result is still true if the torus is replaced by a connected linear algebraic group.
The basic idea in \cite{CX} is to construct the standard toric varieties (see Definition 2.12 in \cite{CX}) by using the complement divisors.
 The group action provides the crucial relation of local integral points for almost all places (see Proposition 4.1 in \cite{CX}).
 In order to prove strong approximation, one further needs to show that any point outside the torus can be approximated by a point in the torus with the same local invariant for all elements in the Brauer groups of a given toric variety (see Proposition 4.2 in \cite{CX}). The proof of such local approximation property is reduced to an affine toric variety case.

 Such a method cannot be generalized to the case of arbitrary connected linear algebraic groups directly. For example, one cannot expect that such varieties can be covered by affine pieces of the same type varieties for a general linear algebraic group even over an algebraically closed field (see \cite{Su}).

Instead of explicit constructions, we apply the descent theory to study universal torsors. Combining with the rigidity property of torsors under multiplicative groups developed by Colliot-Th\'el\`ene in \cite{CT08}, we conclude that such torsors also contain linear algebraic groups with compatible action (see Lemma \ref{a_Y}). Applying the idea of group action in \cite{CX}, one can prove strong approximation with Brauer-Manin obstruction using the relation of local integral points at almost all places (see Lemma \ref{loc-int}) provided by these torsors. In fact, this paper recovers the main result in \cite{CX} by this new method.

Notation and terminology are standard. Let $k$ be a number field, $\Omega_k$ the set of all primes in $k$ and  $\infty_k$ the set of all Archimedean primes in $k$. Write $v<\infty_k$ for $v\in \Omega_k\setminus \infty_k$. Let $O_k$ be the ring of integers of $k$ and $O_{k,S}$ the $S$-integers of $k$ for a finite set $S$ of $\Omega_k$ containing $\infty_k$. For each $v\in \Omega_k$, the completion of $k$ at $v$ is denoted by $k_v$ and the completion of $O_k$ at $v$ by $O_v$. Write $O_{v}=k_v$ for $v\in \infty_k$ and $k_{\infty}=\prod_{v\in \infty_k} k_v$. Let ${\mathbf A}_k$ be the adelic ring of $k$ and ${\mathbf A}_k^f$ the finite adelic ring of $k$.

For any scheme $X$ over $k$, we denote $X_{\bar k}=X\times_k \bar{k}$ with $\bar{k}$ a fixed algebraic closure of $k$. A variety $X$ over $k$ is defined to be a reduced separated scheme of finite type over $k$. Let $$\Br(X)=H_{\text{\'et}}^2(X, \Bbb G_m),  \ \ \ \Br_1(X)= \ker[\Br(X)\rightarrow \Br(X_{\bar{k}})], \ \ \ \Br_a(X)=\Br_1(X)/\Br(k) . $$  We denote by $\Bbb A_k^n$ the affine space of dimension $n$ over $k$.
Define
$$ X(\mathbf A_k)_{\bullet}= [\prod_{v\in \infty_k} \pi_0(X(k_v))]\times X(\mathbf A_k^f) $$
where $\pi_0(X(k_v))$ is the set of connected components of $X(k_v)$ for each $v\in \infty_k$. Since an element in $\Br (X)$ takes a constant value at each connected component of $\pi_0(X(k_v))$ for all $v\in \infty_k$, one can define
$$ X({\mathbf A}_k)_{\bullet}^B = \{ (x_v)_{v\in \Omega_k}\in X({\mathbf A}_k)_{\bullet}: \ \ \sum_{v\in \Omega_k} \inv_v(\xi(x_v))=0, \ \ \forall \xi\in B \}  $$ for any subset $B$ of $\Br(X)$. Class field theory implies that $X(k) \subseteq X(\mathbf A_k)_{\bullet}^B$.

\begin{definition} \label{sa} Let $k$ be a number field. Let $X$ be a variety over $k$.

(1) If $X(k)$ is dense in $ X({\mathbf A}_k)_{\bullet}$, we say $X$ satisfies strong approximation off $\infty_k$.

(2) If $X(k)$ is dense in $ X({\mathbf A}_k)_{\bullet}^{B}$ for some subset $B$ of $\Br(X)$, we say $X$ satisfies strong approximation with respect to $B$ off $\infty_k$.
\end{definition}

In this paper, we will study strong approximation for a $G$-groupic variety for a connected linear algebraic group $G$ defined as follows.

\begin{definition} \label{tv} Let $k$ be a field. Let $G$ be a connected linear algebraic group over $k$ and $X$ be a variety over $k$.

(1)  $X$ is called a $G$-variety if there is an action of $G$
$$ a_X: G\times_k X \longrightarrow X  $$ over $k$.

A morphism from a $G$-variety $X$ to a $G$-variety $X'$ is defined to be a morphism of schemes from $X$ to $X'$ which is compatible with the actions of $G$. Such a morphism is called $G$-morphism.

(2) If $X$ is a geometrically integral $G$-variety and $G$ is contained in $X$ as an open subset such that the action $a_X|_{G\times_k G}=m_G$ where $m_G$ is the multiplication of $G$, then we call $X$ a $G$-groupic variety.

A morphism $f$ from a $G$-groupic variety $X$ to a $G'$-groupic variety $X'$ is defined to be a morphism of schemes from $X$ to $X'$ such that $f|_{G}: G\rightarrow G'$ is a homomorphism of linear algebraic groups.
\end{definition}

It is clear that a morphism of groupic varieties is compatible with group actions.

 Let $k$ be a field with characteristic $0$. For any connected linear algebraic group $G$, the reductive part $G^{red}$ of $G$ is given by $$ 1\rightarrow R_u(G)\rightarrow G \rightarrow G^{red} \rightarrow 1 $$ where $R_u(G)$ is the unipotent radical of $G$. Let $G^{ss}=[G^{red}, G^{red}]$ be the semi-simple part of $G$, let $G^{sc}$ be the semi-simple simply connected covering of $G^{ss}$, let $G^{tor}$ be the maximal quotient torus of $G$ and let $\varsigma_G: G\rightarrow  G^{tor} $ be the canonical quotient homomorphism.

 The main result of this paper is the following theorem.

\begin{thm} \label{main} Let $k$ be a number field and $G$ a connected linear algebraic group. Assume $G'(k_\infty)$ is not compact for any non-trivial simple factor $G'$ of $G^{sc}$. Then any smooth $G$-groupic variety $X$ over $k$ satisfies strong approximation with respect to $\Br_1(X)$ off $\infty_k$. \end{thm}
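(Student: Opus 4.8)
We prove Theorem \ref{main} by descent, reducing it to the strong approximation property of connected linear algebraic groups (due to Demarche \cite{D}) applied to the group which, by Lemma \ref{a_Y}, sits inside a universal torsor of $X$; this is the strategy sketched in the introduction, generalizing \cite{CX}. First, a smooth $G$-groupic variety $X$ is geometrically integral and contains the rational point $e\in G\subseteq X$, so its elementary obstruction vanishes. Since $G$ is open dense in $X$, restriction embeds $\bar k[X]^\times/\bar k^\times$ into the finitely generated group $\bar k[G]^\times/\bar k^\times$, and $\Pic(X_{\bar k})$ is an extension of a subquotient of the finite group $\Pic(G_{\bar k})$ by the free abelian group on the irreducible components of $X_{\bar k}\setminus G_{\bar k}$; hence both are finitely generated and universal torsors of $X$ exist. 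By the descent theory of Colliot-Th\'el\`ene and Sansuc adapted to strong approximation (see \cite{CTX}, \cite{Ha08}, \cite{D}),
\[
X(\mathbf A_k)_\bullet^{\Br_1(X)}\subseteq\bigcup_{\sigma} f_\sigma\bigl(Y_\sigma(\mathbf A_k)_\bullet^{\Br_1(Y_\sigma)}\bigr),
\]
the union running over the finitely many twists $f_\sigma\colon Y_\sigma\to X$ of a universal torsor admitting an adelic point. A universal torsor has trivial geometric units and trivial geometric Picard group, so $\Br_a(Y_\sigma)=0$ and $Y_\sigma(\mathbf A_k)_\bullet^{\Br_1(Y_\sigma)}=Y_\sigma(\mathbf A_k)_\bullet$. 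As each $f_\sigma$ is continuous and $f_\sigma(Y_\sigma(k))\subseteq X(k)$, it suffices to prove that every nonempty $Y_\sigma$ satisfies strong approximation off $\infty_k$ in the plain sense.

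\emph{The torsor is again groupic.} This is Lemma \ref{a_Y}. The restriction of $f_\sigma$ to the open dense subgroup $G\subseteq X$ is a torsor over the connected linear group $G$ under a group $S$ of multiplicative type; by Colliot-Th\'el\`ene's rigidity for such torsors \cite{CT08}, after translating by its fibre over $e$ it becomes a connected linear algebraic group $G_\sigma$ sitting in a central extension $1\to S\to G_\sigma\to G\to 1$. Hence $Y_\sigma$ is a smooth $G_\sigma$-groupic variety. Because $S$ is central of multiplicative type, one checks $R_u(G_\sigma)\cong R_u(G)$ and that $G_\sigma^{ss}\to G^{ss}$ is a central isogeny, so $G_\sigma^{sc}=G^{sc}$; in particular $G_\sigma'(k_\infty)$ is non-compact for every non-trivial simple factor $G_\sigma'$ of $G_\sigma^{sc}$, so the hypothesis of the theorem is inherited by $G_\sigma$.

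\emph{Strong approximation for $Y_\sigma$.} Put $Z=Y_\sigma$, $H=G_\sigma$, and fix $O_{k,S_0}$-models $\mathcal H\subseteq\mathcal Z$ with the $H$-action and the quotient $\varsigma_H\colon H\to H^{tor}$ extending. Let $(z_v)\in Z(\mathbf A_k)_\bullet$ with $z_v\in\mathcal Z(O_v)$ for $v\notin S\supseteq S_0$, and let an open neighbourhood of $(z_v)$ be given, constraining only places in $S$. Since $H(k_v)$ is dense in $Z(k_v)$, choose $h_v\in H(k_v)$ close to $z_v$ for $v\in S\setminus\infty_k$ and lying in the connected component of $z_v$ for $v\in\infty_k$, and set $h_v=e$ for $v\notin S$; then $(h_v)\in H(\mathbf A_k)_\bullet$, but $(h_v)$ need not satisfy the Brauer-Manin condition for $\Br_1(H)$. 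Since $\Br_a(Z)=0$, every non-zero class of $\Br_a(H)$ has non-trivial residue along some boundary divisor of $Z$; using the relation between $\mathcal Z(O_v)$ and $\mathcal H(O_v)$ provided by the $H$-action (Lemma \ref{loc-int}, valid for all $v$ outside a finite set) together with $\varsigma_H$, one can, at finitely many further places $v$ forming a set $S'\supseteq S$, choose $h_v\in H(k_v)\cap\mathcal Z(O_v)$ arbitrarily $v$-adically close to a prescribed boundary divisor, and so realize suitable values of $\inv_v(\xi(h_v))$ for $\xi$ running over a finite set of generators of $\Br_a(H)$. Solving the corresponding reciprocity relations, we obtain $(h_v)\in H(\mathbf A_k)_\bullet^{\Br_1(H)}$ still lying in the given neighbourhood of $(z_v)$, with $h_v\in\mathcal Z(O_v)$ for $v\notin S$ and $h_v\in\mathcal H(O_v)$ for $v\notin S'$. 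By strong approximation with Brauer-Manin obstruction for the connected linear group $H$ (Demarche \cite{D}, whose hypothesis holds by the previous step), there is $h\in H(k)$ close to $h_v$ for $v\in S'\setminus\infty_k$, in the prescribed component for $v\in\infty_k$, and with $h\in\mathcal H(O_v)\subseteq\mathcal Z(O_v)$ for $v\notin S'$. Then $h\in H(k)\subseteq Z(k)$ lies in the required neighbourhood, which finishes the proof.

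\emph{Main obstacle.} The decisive input is Lemma \ref{a_Y}: one must show that the universal torsor of a $G$-groupic variety, restricted to $G$, genuinely acquires the structure of a connected linear algebraic group with the same simply connected covering, which rests on controlling the geometric Picard group and units of $X$ and on the rigidity of torsors under groups of multiplicative type; the behaviour under twisting and possible torsion in $\Pic(X_{\bar k})$ require care. The second delicate point is Lemma \ref{loc-int}: with no combinatorial model available (unlike the toric case treated in Proposition 4.1 of \cite{CX}), one must prove that the action of $G_\sigma$ really governs the $O_v$-points of the torsor at almost all places, which is what converts the structural statement into the effective local control used at the auxiliary places in the last step.
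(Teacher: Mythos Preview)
Your reduction has a genuine gap at the descent step. A universal torsor $Y_\sigma\to X$ under $S$ with $\hat S=\Pic(X_{\bar k})$ has $\Pic((Y_\sigma)_{\bar k})=0$, but the fundamental exact sequence of \cite{CTS87} gives $\bar k[Y_\sigma]^\times/\bar k^\times\cong\bar k[X]^\times/\bar k^\times$, which is in general nonzero (it embeds in $(G^{tor})^*$ and equals it whenever $\codim(X\setminus G,X)\geq 2$, e.g.\ when $X=G$). Thus your claim $\Br_a(Y_\sigma)=0$ fails; already for $X=G=\mathbb G_m$ the universal torsor is $\mathbb G_m$ itself, with $\Br_a(\mathbb G_m)\cong H^1(k,\mathbb Q/\mathbb Z)\neq 0$. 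Consequently the passage to ``plain'' strong approximation for $Y_\sigma$ is unjustified, and the descent inclusion you quote for $\Br_1(X)$ in the form stated likewise needs $\bar k[X]^\times=\bar k^\times$. Your third paragraph tries to repair this by adjusting $(h_v)$ near boundary divisors to land in $H(\mathbf A_k)_\bullet^{\Br_1(H)}$, but that argument again presupposes $\Br_a(Z)=0$ (so that every class in $\Br_a(H)$ has a residue on $Z\setminus H$), and even then ``solving the corresponding reciprocity relations'' for the typically infinite group $\Br_a(H)$ is asserted rather than proved.

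The paper addresses precisely this obstacle, and its architecture is quite different from yours. It does not run descent over twists at all. It pulls back the universal torsor of a smooth \emph{compactification} $X^c$ (where $\bar k[X^c]^\times=\bar k^\times$ automatically), obtaining a single torsor $Y\to X$ under a torus $T$ with $T^*=\Pic(X^c_{\bar k})$; the preimage $H$ of $G$ is quasi-trivial with $H^{tor}\cong T_0\times_k T_1$, where $T_1^*\cong\bar k[Y]^\times/\bar k^\times$. The unwanted units are then killed by passing to the fibre $Y_0=\psi^{-1}(1_{T_1})$ of an extension $\psi\colon Y\to T_1$ of the projection $H\to T_1$ (Lemma \ref{ext}); Proposition \ref{y_0} shows $\bar k[Y_0]^\times=\bar k^\times$, $\Pic((Y_0)_{\bar k})=0$ and $\Br_a(Y_0)=0$. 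The proof of Theorem \ref{main} then combines the local approximation Proposition \ref{local}, the integral decomposition Lemma \ref{loc-int}, the identification $\Br_1(X)=\ker\bigl(\Br_1(G)\to\Br_a(H_0)\bigr)$ (Proposition \ref{br}), relative strong approximation for $H_0\to G$ (Proposition \ref{rel-sa}), and finally strong approximation for $Y_0$ (Proposition \ref{add}). The construction of $Y_0$ and the verification of Proposition \ref{y_0} are the substitutes for the step that is missing in your outline.
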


The paper is organized as follows. In \S 2, we study some basic properties of $G$-varieties over a field of characteristic 0. In \S 3, we apply the descent theory developed by Colliot-Th\'el\`ene and Sansuc in \cite{CTS87} and the rigidity property of torsors under multiplicative groups developed by Colliot-Th\'el\`ene in \cite{CT08} to construct the right candidates so that one can expect the arguments analogue to those of \cite{CX} to apply. All results in this section work over arbitrary fields of characteristic 0 as well. In \S 4, we give a proof of Theorem \ref{main} based on the results in previous sections. In \S 5, we study strong approximation with Brauer-Manin obstruction off any finite non-empty subset of $\Omega_k$ and prove such strong approximation when the invertible functions over $\bar k$ are constant (see Theorem \ref{arch-main}).

\section{Preliminary on $G$-varieties}

In this section, we establish some basic results on $G$-varieties which we need in the next sections.  In this section, we assume that $k$ is an arbitrary field $k$ with $char(k)=0$ and $\Gamma_k=\Gal(\bar{k}/k)$ where $\bar k$ is an algebraic closure of $k$.

First, we need a kind of Stein factorization in the category of $G$-varieties.

\begin{lem}\label{gr-nor} Let $A\xrightarrow {\lambda} B$ be a dominant $G$-morphism of normal and geometrically integral $G$-varieties over $k$ where $G$ is a connected linear algebraic group. Assume $B$ is affine. Then $\lambda$ can be factorized into morphisms of $G$-varieties $A\xrightarrow{\iota} C$ and $C\xrightarrow{\tau} B$ such that $C$ is normal and geometrically integral, the generic fiber of $\iota$ is geometrically integral and $\tau$ is finite.
\end{lem}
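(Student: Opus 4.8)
The natural approach is to mimic the classical Stein factorization while tracking the $G$-action throughout. First I would pass to function fields: let $K = k(B) \hookrightarrow L = k(A)$ be the extension induced by $\lambda$, and let $L_0$ be the algebraic closure of $K$ in $L$. Since $A$ and $B$ are geometrically integral, $K$ is the generic stalk of $B$ and $L$ that of $A$; the field $L_0$ is a finite extension of $K$, and the geometric integrality of $A$ forces $L_0 \otimes_K \bar{K}$ to be a field, i.e. $L_0/K$ is a finite extension with $L_0$ geometrically integral over $B$. Now take $C$ to be the normalization of $B$ in $L_0$; since $B$ is affine and normal and $L_0/K$ is finite, $C$ is an affine normal integral scheme, finite over $B$ via a morphism $\tau$, and the inclusions $K \subseteq L_0 \subseteq L$ give a factorization $A \xrightarrow{\iota} C \xrightarrow{\tau} B$ of $\lambda$ (using that $A$ is normal, so $\iota$ is defined on all of $A$ by the universal property of normalization). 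By construction the generic fiber of $\iota$ is $\Spec(L \otimes_{L_0} L_0) = \Spec L$ viewed over $L_0$, and since $L_0$ is algebraically closed in $L$ this generic fiber is geometrically integral. Geometric integrality of $C$ over $k$ follows because $C_{\bar k}$ is the normalization of $B_{\bar k}$ in $L_0 \otimes_k \bar k$, and $L_0$ geometrically integral over $B$ means this ring is a domain.

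The substantive point — and the one I expect to be the main obstacle — is to equip $C$ with a $G$-action making both $\iota$ and $\tau$ into $G$-morphisms. The idea is that the $G$-action on $A$ and $B$ induces, after base change to $\bar k$ and then (if desired) descent, a canonical action on the intermediate variety. Concretely, consider the two morphisms $G \times_k A \rightrightarrows A \to C$ given by $\iota \circ a_A$ and $\iota \circ \mathrm{pr}_A$; I want to show the composite $G \times_k A \to C$ (via either route after suitable modification) factors through $G \times_k C$. The clean way is functorial: the normalization $C \to B$ of $B$ in $L_0$ is characterized by a universal property among normal schemes dominating $B$ with function field containing $L_0$, and the field $L_0$ — being the algebraic closure of $k(B)$ in $k(A)$ — is intrinsic to the morphism $\lambda$. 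Therefore for each $g \in G(\bar k)$, translation $a_A(g,-)\colon A_{\bar k} \to A_{\bar k}$ covers translation on $B_{\bar k}$, hence induces an automorphism of $C_{\bar k}$ compatible with everything in sight; one checks these fit into a morphism $G_{\bar k} \times C_{\bar k} \to C_{\bar k}$ by the same functoriality applied to $G \times A$ (using that $G\times C$ is the normalization of $G \times B$ in the appropriate field, since $G$ is geometrically integral so products stay integral and function fields behave well). Galois descent, or rather the fact that the whole construction was done over $k$ from data over $k$, then gives the action over $k$, and associativity plus the unit axiom are verified on a dense open (the generic point), hence hold.

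Finally I would record that $\iota$ is dominant (clear, as $k(C) = L_0 \subseteq L$) so that the factorization is of the required form, and note that $\iota$ has geometrically integral generic fiber as already checked. One subtlety worth isolating in the writeup: to apply the normalization universal property one needs $A$ normal (given) and one needs to know $L_0 \hookrightarrow L$ to conclude $A \to C$ exists — this is where normality of $A$ is essential and where the hypothesis is used. The genuinely delicate verification is the compatibility of the induced maps on normalizations with the group law, i.e. that $G \times_k C \to C$ really is a morphism of schemes and not just a compatible family of translations; I would handle this by exhibiting $G \times_k C$ itself as the normalization of $G \times_k B$ in $k(G \times_k A)_0 = k(G)\otimes_k L_0$ (legitimate since $G$ is smooth connected, hence $G\times(-)$ preserves normality and geometric integrality, and $k(G) \otimes_k L_0$ is the algebraic closure of $k(G\times B)$ in $k(G\times A)$), so that the two actions $G\times A \to A \to C$ and $G \times A \to G\times C$ are both instances of the normalization map and the $G$-action on $C$ is deduced from that of $A$ by universality.
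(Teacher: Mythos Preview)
Your proposal is correct and follows essentially the same route as the paper: both construct $C$ as the normalization of $B$ in the algebraic closure $L_0$ of $k(B)$ inside $k(A)$, and both obtain the $G$-action by identifying $G\times_k C$ with the analogous Stein factor of $G\times_k A \to G\times_k B$. The paper makes this identification by first building the intermediate object $\widetilde{C}$ for $G\times_k A \to G\times_k B$ and then proving $\widetilde{C}\xrightarrow{\sim} G\times_k C$ via a finite-plus-generically-isomorphic argument (using that $A_{\eta_C}$ is geometrically integral over $k(C)$), whereas you phrase the same step as a direct universal-property claim; these are equivalent formulations of the same computation. Your detours through $\bar k$-points and Galois descent are unnecessary (the construction is already over $k$), but you recognize this yourself, and your final paragraph lands on exactly the argument the paper uses.
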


\begin{proof} Since $\lambda$ is dominant, one can view $k(B)$ as a subfield of $k(A)$. Let $k(C)$ be the algebraic closure of $k(B)$ inside $k(A)$ and $k[C]$ be the integral closure of $k[B]$ inside $k(C)$. Since $A$ is normal, one has $k[C]\subseteq k[A]$ and $k[C]$ is integral closure of $k[B]$ inside $k[A]$.  Since $A$ is geometrically integral, one has $k$ is algebraically closed inside $k(A)$. Therefore $k$ is algebraically closed in $k(C)$. Then $C=Spec(k[C])$ is normal, geometrically integral and $C \rightarrow B$ is finite. Moreover $\lambda$ factors into $A\xrightarrow{\iota} C$ and $C\xrightarrow{\tau} B$ by inclusion of the global sections.

Similarly, we can factor $A\times_k G \xrightarrow{\lambda\times id} B\times_k G$ into $G\times_k A \rightarrow \widetilde{C} \rightarrow G\times_k B$ such that $k[\widetilde{C}]$ is the integral closure of $k[B\times_k G]$ inside $k[A\times_k G]$. Then one has the following commutative diagram
 \[ \begin{CD} G\times_k A  @>{pr_2}>> A \\
 @VVV @VVV \\
\widetilde{C}  @>>> C  \\
@VVV @VVV \\
G\times_k B @>>{pr_2}> B
 \end{CD} \]
where $\widetilde{C}  \rightarrow  C$ is a canonical morphism induced by $k[A]\hookrightarrow k[G\times_k A]$. Moreover one has a unique morphism $ \widetilde{C} \xrightarrow{\theta} G\times_k C$ which is finite because both the morphism $\widetilde{C} \rightarrow G\times_k B$ and the morphism $G\times_k C \rightarrow G\times_k B$ are finite. Let $\eta_C$ be the generic point of $C$. Then one obtains
$$ G\times_k A_{\eta_C} \rightarrow {\widetilde{C}}_{\eta_C} \rightarrow G\times_k k(C) $$
over $\eta_C$. Since $k(C)$ is algebraic closed in $k(A)$, one obtains that all fibers of $$ G\times_k A_{\eta_C} \rightarrow G\times_k k(C)$$ are geometrically integral. Therefore $ {\widetilde{C}}_{\eta_C} \rightarrow G\times_k k(C) $ is an isomorphism. This implies that $\theta$ is an isomorphism.

Replacing $pr_2$ by the actions $a_A$ and $a_B$ in the above diagram, one obtains the following commutative diagram
\[ \begin{CD} G\times_k A  @>{a_A}>> A \\
 @VVV @VVV \\
\widetilde{C}=G\times_k C  @>{a_C}>> C  \\
@VVV @VVV \\
G\times_k B @>>{a_B}> B
 \end{CD} \]
where $a_C$ is induced by the homomorphism of the global sections which is the unique homomorphism to make the above diagram commute. This implies that $C$ is a $G$-variety and $\iota$ and $\tau$ are morphisms of $G$-varieties by uniqueness. \end{proof}

We can apply this lemma to prove the following result.

\begin{prop}\label{fib-int} Let $A\xrightarrow {\lambda} B$ be a  $G$-morphism of geometrically integral $G$-varieties over $k$. Assuming $B$ is affine and smooth. If $B=G/G_1$ where both $G$ and $G_1$ are connected linear algebraic groups, then all fibers of $\lambda$ are nonempty and geometrically integral. Moreover if $A$ is smooth, then $\lambda$ is smooth.
\end{prop}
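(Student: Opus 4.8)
The guiding observation is that $G$ acts transitively on $B=G/G_1$, so a nonempty $G$-stable constructible subset of $B$ is all of $B$, and, over $\bar k$, a nonempty $G$-stable closed subset of $A$ maps onto $B$. The first assertion is then immediate: since $\lambda$ is $G$-equivariant and $A\neq\emptyset$, its image $\lambda(A)$ is a nonempty $G$-stable constructible subset of $B$, hence equals $B$; thus $\lambda$ is surjective, in particular dominant, and every fibre is nonempty.

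For geometric integrality of the fibres, I would first reduce to the generic fibre. Because $\operatorname{char}k=0$ and $G\to B$ splits over every algebraically closed field, any two fibres of $\lambda$ taken over points with algebraically closed residue field become isomorphic after translating by a suitable point of $G$; hence it is enough to show the generic fibre $A_{\eta_B}$ is geometrically integral, equivalently --- using $\operatorname{char}k=0$ once more --- that $k(B)$ is algebraically closed in $k(A)$. To check this I would pass to the normalization $\widetilde A\to A$, which is again a normal, geometrically integral $G$-variety since $G$ is smooth, and apply Lemma \ref{gr-nor} to the dominant $G$-morphism $\widetilde A\to B$ (legitimate since $\widetilde A$ is normal and $B$ is affine and normal, being smooth): one obtains a factorization $\widetilde A\xrightarrow{\iota}C\xrightarrow{\tau}B$ with $C$ normal and geometrically integral, $\iota$ with geometrically integral generic fibre, and $\tau$ finite and $G$-equivariant. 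As $k(\widetilde A)=k(A)$, it remains to prove that $\tau$ is an isomorphism.

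This is the step where the hypothesis $B=G/G_1$ with $G_1$ connected is essential. Since $\tau$ is finite and surjective, $\dim C=\dim B$; choosing $c\in C(\bar k)$ over the base point of $B$, the orbit $G_{\bar k}\cdot c$ is locally closed of dimension $\dim C$, hence dense, hence open, in the irreducible $C_{\bar k}$, while its complement is mapped by the finite, $G$-equivariant morphism $\tau$ onto a $G_{\bar k}$-stable closed subset of $B_{\bar k}$ of dimension $<\dim B$, which must be empty. Thus $C_{\bar k}=G_{\bar k}\cdot c=G_{\bar k}/H$ with $H\subseteq G_1$, and the finiteness of the induced projection $G_{\bar k}/H\to G_{\bar k}/G_1$ forces $[G_1:H]<\infty$; as $G_1$ is connected this means $H=G_1$, so $\tau$ is an isomorphism. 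Consequently $A_{\eta_B}$ is geometrically integral, and therefore so is every fibre of $\lambda$.

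Finally, assume $A$ is smooth. By generic smoothness over the base in characteristic $0$, $\lambda$ is smooth over some dense open $U_0\subseteq B$, so its non-smooth locus $N$, a closed subset of $A$, satisfies $\lambda(N)\subseteq B\setminus U_0\subsetneq B$. But translations by $G$ are automorphisms of $A$ lying over automorphisms of $B$, so $N$ is $G$-stable; then over $\bar k$, were $N$ nonempty it would contain the orbit of one of its closed points and hence, by transitivity, surject onto $B$ --- a contradiction. Hence $N=\emptyset$ and $\lambda$ is smooth. The main obstacle in all of this is the middle step: producing the Stein-type factorization of Lemma \ref{gr-nor} (which is what forces the detour through the normalization, $A$ not being assumed normal) and then showing that a finite equivariant cover of the homogeneous space $G/G_1$ with $G_1$ connected is trivial; the surjectivity and smoothness assertions are short once one observes that the relevant loci are $G$-stable.
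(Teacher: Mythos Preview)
Your argument is correct and follows the same strategy as the paper: establish surjectivity from transitivity, reduce geometric integrality of all fibres to that of the generic fibre, obtain a Stein-type factorization $\widetilde A\to C\xrightarrow{\tau} B$ via Lemma~\ref{gr-nor}, and use connectedness of $G_1$ to force $\tau$ to be an isomorphism; smoothness then comes from generic smoothness plus $G$-stability of the non-smooth locus.

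The only genuine difference is how you reach a normal $G$-variety to which Lemma~\ref{gr-nor} applies. You pass to the normalization $\widetilde A\to A$, whereas the paper passes to the smooth locus $A^{sm}\subseteq A$. Both work: the paper's choice makes the $G$-variety structure immediate (an open $G$-stable subset), while yours requires the observation that the $G$-action lifts to $\widetilde A$ (true by the universal property of normalization since $G\times_k\widetilde A$ is normal and the composite $G\times_k\widetilde A\to A$ is dominant), and that $\widetilde A$ remains geometrically integral (true in characteristic $0$ because normalization commutes with the separable base change $k\to\bar k$). Your treatments of the step $C\cong B$ and of the smoothness assertion are a bit more explicit than the paper's, which simply notes that $C$ has a single $G$-orbit and then invokes generic smoothness together with the group translation; the content is the same.
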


\begin{proof} Without loss of generality, one can assume $k=\bar k$. Since the action of $G$ on $B$ is transitive, $\lambda$ is surjective.

Suppose $A$ is smooth. By applying Lemma \ref{gr-nor}, one can factorize $\lambda$ into morphisms of $G$-varieties $A\xrightarrow{\iota} C$ and $C\xrightarrow{\tau} B$ such that $C$ is geometrically integral, the generic fiber of $\iota$ is geometrically integral and $\tau$ is finite. Since $B$ has only a single orbit, any orbit of $G$ in $C$ contains the generic point of $C$. This implies that $C$ has a single orbit of $G$ as well. Since $G_1$ is connected, $G_1$ contains no proper closed subgroups of finite index. This implies $B=C$. This means that the generic fiber of $\lambda$ is nonempty and geometrically integral.

In general, let $A^{sm}$ be the smooth locus of $A$. Then $A^{sm}$ is also a $G$-variety. By the above result, one has that the generic fiber of $\lambda_{A^{sm}}$ is geometrically integral. Since $A_{\eta_B}^{sm}$ is open dense in $A_{\eta_B}$ where $\eta_B$ is the generic point of $B$, one concludes that $A_{\eta_B}$ is geometrically integral. Since all fibers are translated by the group action, one concludes that all fibers of $\lambda$ are nonempty and geometrically integral. By generic smoothness (see Corollary 10.7 of Chapter III in \cite{Hartshorne}), one further obtains that $\lambda$ is smooth.
\end{proof}

\begin{prop}\label{Uu}
Let $A$ be a smooth geometrically integral $G$-variety, $B\subset A$ an open $G$-subvariety. Then there exists an open $G$-subvariety $C\subset A$, such that $\codim(A\setminus C, A)\geq 2$, $B\subset C$, $(C\setminus B)_{\bar k}\cong \coprod_iD_i$ and each $D_i$ is a smooth integral $G_{\bar{k}}$-variety with $\dim(D_i)=\dim(A)-1$.
\end{prop}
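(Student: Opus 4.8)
The plan is to obtain $C$ by excising from $A$ the ``bad part'' of the closed complement $Z:=(A\setminus B)_{\mathrm{red}}$, which is a closed $G$-subvariety of $A$ since $B$ is $G$-stable; I will assume $B\neq\emptyset$, so that every irreducible component of $Z$ has codimension $\geq 1$ in $A$. First I would split $Z=Z_1\cup Z_{\geq 2}$, where $Z_1$ is the union of the codimension-$1$ irreducible components and $Z_{\geq 2}$ is the union of the components of codimension $\geq 2$; both are closed in $A$, and $\codim(Z_{\geq 2},A)\geq 2$. Then I would let $S\subseteq Z_1$ be the non-smooth locus of the $k$-variety $Z_1$ --- closed in $Z_1$, hence closed in $A$ --- and set $C:=A\setminus(Z_{\geq 2}\cup S)$.

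The first thing to verify is that $C$ is an open $G$-subvariety with $B\subset C$ and $\codim(A\setminus C,A)\geq 2$. The $G$-action on $Z$ preserves $Z_1$, $Z_{\geq 2}$ and $S$, because the dimension of (the image of) an irreducible subset and the non-smooth locus are preserved by the automorphisms $a_X(g,-)$; concretely, for any codimension-$1$ component $\bar D$ of $Z_{\bar k}$ the translate $g\cdot\bar D$ is irreducible of dimension $\dim A-1$, hence again lies in $Z_{1,\bar k}$. Therefore $Z_{\geq 2}\cup S$ is a $G$-stable closed subset, $C$ is open and $G$-stable, and $B=A\setminus Z\subset C$. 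For the codimension bound I only need $\codim(S,A)\geq 2$: since $\mathrm{char}(k)=0$, each component of $Z_1$ is generically smooth and not contained in the union of the others, so $S$ is a proper closed subset of the equidimensional scheme $Z_1$ of dimension $\dim A-1$, whence $\dim S\leq\dim A-2$.

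Next I would describe $C\setminus B$ geometrically. One has $C\setminus B=C\cap Z=Z_1\setminus(S\cup Z_{\geq 2})$, which is closed in $C$; endow it with its reduced structure. Over $\bar k$, write $Z_{1,\bar k}=\bigcup_i\bar D_i$ for the irreducible components. Since $S_{\bar k}$ is exactly the singular locus of $Z_{1,\bar k}$, it contains every intersection $\bar D_i\cap\bar D_j$ ($i\neq j$) and every $\mathrm{Sing}(\bar D_i)$; hence $Z_{1,\bar k}\setminus S_{\bar k}=\coprod_i(\bar D_i\setminus S_{\bar k})$ is a disjoint union, each piece being a smooth integral variety. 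Removing $Z_{\geq 2,\bar k}$ as well gives $(C\setminus B)_{\bar k}=\coprod_i D_i$ with $D_i:=\bar D_i\setminus(S_{\bar k}\cup Z_{\geq 2,\bar k})$; as neither $S_{\bar k}$ nor $Z_{\geq 2,\bar k}$ contains the component $\bar D_i$, each $D_i$ is a nonempty dense open subscheme of $\bar D_i$, hence smooth, integral and of dimension $\dim A-1$. Finally, $G_{\bar k}$ is connected and acts on $Z_{1,\bar k}$, so it fixes each component $\bar D_i$ (a connected group cannot act nontrivially on a finite set); since $S_{\bar k}$ and $Z_{\geq 2,\bar k}$ are $G_{\bar k}$-stable, each $D_i$ is then a $G_{\bar k}$-subvariety, as required.

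The step I expect to be the main obstacle is making the group action interact correctly with the stratification: one has to know that $a_X$ restricts to the divisorial piece $Z_1$ and to its singular locus $S$ --- not just to $Z$ as a whole --- and, over $\bar k$, that it stabilises each individual component $\bar D_i$ rather than permuting them; this is precisely where (geometric) connectedness of $G$ is used. A secondary nuisance is that $Z_1$ and $Z_{\geq 2}$ may really meet, so $C\setminus B$ is only an \emph{open} subscheme of the smooth locus $Z_1\setminus S$, and one must check by a dimension count that each $D_i$ is still dense in $\bar D_i$. The remainder is routine verification that the loci involved are closed and that passing to the non-smooth locus commutes with base change to $\bar k$.
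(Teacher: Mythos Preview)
Your proof is correct and follows essentially the same construction as the paper: the paper first removes the singular locus of $A\setminus B$ and then discards the codimension~$\geq 2$ connected components of what remains, whereas you first split off the codimension~$\geq 2$ irreducible components and then remove the singular locus of the divisorial part, but an easy check shows $\mathrm{Sing}(Z)\cup C_2 = Z_{\geq 2}\cup S$, so the resulting open set $C$ is literally the same. Your write-up is in fact more detailed than the paper's terse ``one obtains the result''---in particular you make explicit the point that connectedness of $G_{\bar k}$ forces it to stabilise each individual component $\bar D_i$ (via the irreducibility of $G_{\bar k}\times\bar D_i$), which the paper leaves implicit.
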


\begin{proof}
Let $C'= A\setminus [(A\setminus B)_{sing}]$, where $(A\setminus B)_{sing}$ is the singular part of $A\setminus B$.
Then $C'$ is an open $G$-subvariety of $A$, $\codim(A\setminus C',A)\geq 2$ and $C'\setminus B$ is smooth over $k$.
Thus $(C'\setminus B)=C_1\coprod C_2$ where $C_1$ is the union of all codimension $1$ connected components of $C'\setminus B$, and $C_2$ is the union of all codimension $\geq 2$ connected components of $C'\setminus B$.
 Then $C_1$ and $C_2$ are stable under the action of $G$.
Let $C:=C'\setminus C_2$ and one obtains the result.
\end{proof}

\section{Pull-back of universal torsors over smooth compactifications}

Harari and Skorobogatov have extended the descent theory of Colliot-Th\'el\`ene and Sansuc in \cite{CTS87} to open varieties by defining the extended type of torsors in \cite{HaSk}. One could try to use this generalisation, but we will use the pull-back of the universal torsors of smooth compactifications of open varieties. By the rigidity property of torsors under multiplicative groups developed in \cite{CT08}, one concludes that such torsors also contain a linear algebraic group as an open subset with a compatible action.

In this section, we assume that $k$ is an arbitrary field $k$ with $char(k)=0$ and $\Gamma_k=\Gal(\bar{k}/k)$ where $\bar k$ is an algebraic closure of $k$.
Let $X$ be a smooth $G$-groupic variety over $k$ and $X^c$ be a smooth compactification of $X$ over $k$.  Then $X^c$ is a smooth compactification of $G$ over $k$ and $\Pic(X^c_{\bar{k}})$ is a flasque $\Gamma_k$-module by Theorem 3.2 in \cite{BKG}.
Let $T$ be a torus over $k$ such that the character group $$T^*=Hom_{\bar{k}}(T, \Bbb G_m)=\Pic(X^c_{\bar{k}}) . $$

By corollary 2.3.9 in \cite{Sko}, there is a universal torsor $\rho: Z \rightarrow X^c$ under $T$ over $k$ satisfying $\rho^{-1}(1_G)(k) \neq \emptyset$.
Since $\bar {k} [X^c]^{\times}=\bar{k}^{\times}$, by \cite{CTS87} Section 2.1, one has $Pic(Z_{\bar k})=0$  and $\bar {k}[Z]^\times = \bar {k}^\times$.
 Let $$H=Z\times_{X^c} G \subset Z. $$
 Then $H$ is a quasi-trivial linear algebraic group over $k$ (i.e. $Pic(H_{\bar{k}})=0$ and $\bar{k}[H]^{\times}/\bar{k}^{\times}$ is a permutation $Gal(\bar{k}/k)$-module, see Definition 2.1 in \cite{CT08}) and the projection map $\rho_G$ induces a flasque resolution
\begin{equation} \label{flasque} 1 \rightarrow T \xrightarrow{\kappa} H \xrightarrow{\rho_G} G \rightarrow 1 \end{equation} of $G$ by Theorem 5.4 in \cite{CT08}. Moreover, one has
\begin{equation} \label{uh}  \bar k[H]^\times/\bar k^\times \cong  \Div_{Z_{\bar k} \setminus H_{\bar k}} (Z_{\bar k}) \cong \Div_{X^c_{\bar k}\setminus G_{\bar k}}(X^c_{\bar k}) \end{equation}  by
Theorem 1.6.1 in \cite{CTS87} and Lemma B.1 in \cite{CT08}.

The pull-back of the universal torsor $Z \rightarrow X^c$ defines a torsor over $X$ under $T$:
 $$ \rho_X: \ Y=Z \times_{X^c} X \longrightarrow X .$$
 By Proposition 5.1 in \cite{CT08}, the variety $Y$ is quasi-trivial (see Definition 1.1 in \cite{CT08}) and
 \begin{equation} \label{uy} \bar k[Y]^\times/\bar k^\times \cong  \Div_{Z_{\bar k} \setminus Y_{\bar k}} (Z_{\bar k}) \cong \Div_{X^c_{\bar k}\setminus X_{\bar k}}(X^c_{\bar k}) \end{equation}  by
Theorem 1.6.1 in \cite{CTS87} and Lemma B.1 in \cite{CT08}.

\begin{lem}\label{a_Y} The multiplication $m_H: H\times_k H \rightarrow H$ can be extended to an action
$$ a_Y: \ H\times_k Y \rightarrow Y $$ over $k$.
\end{lem}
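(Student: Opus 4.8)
The plan is to construct the action $a_Y$ by descent from an action on the ambient torsor $Z$, using the group structure on $H$ and the universal property of $Z$ as a universal torsor. First I would observe that $H = Z \times_{X^c} G$ carries a left translation action of $H$ on itself, namely $m_H$, which is compatible (via $\rho_G$) with the translation action of $G$ on $G \subset X^c$. I would like to extend this to an action of $H$ on the whole torsor $Z \to X^c$ covering the $G$-action $a_{X^c}$ on $X^c$ (which exists because $X^c$ is a $G$-groupic variety, hence by the remark after Definition \ref{tv} carries a compatible $G$-action extending $m_G$). The key input is the rigidity/functoriality of universal torsors: for a fixed $h \in H(\bar k)$ lying over $g = \rho_G(h)$, the two torsors $Z$ and $(a_{X^c,g})^* Z$ over $X^c_{\bar k}$ have the same type (both pull back the identity on $\Pic(X^c_{\bar k})$ after translation, since translation by $g$ acts trivially on $\Pic(X^c_{\bar k})$ — a torus being connected), so they are isomorphic, and the point $h \in H \subset Z$ over $1_G$ rigidifies the isomorphism. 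Carrying this out in families over $H$ rather than pointwise is exactly the assertion that the morphism $H \times_k Z \to X^c$, $(h,z) \mapsto a_{X^c}(\rho_G(h), \rho(z))$, when equipped with the torsor $\mathrm{pr}_{23}^* Z$, is isomorphic to $\mathrm{pr}^* Z$ for the other projection; the rigidification is supplied by $m_H$ on the open subset $H \times_k H$.

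The cleaner route, which I would actually pursue, avoids a direct manipulation of universal-torsor types and instead builds $a_Y$ on a dense open set and then extends. Concretely: on $H \times_k H \subset H \times_k Y$ we already have $m_H$ landing in $H \subset Y$. I would next extend this over $H \times_k G$: since $Y \to X$ restricts to the torsor $H \to G$, and $H$ acts on $H$ compatibly with the $G$-action on $G$, a standard argument (the action map $H \times_k H \to H$ is $G$-equivariant for the $\mathrm{pr}_2$-action downstairs, and $Y = Z \times_{X^c} X$ is the pullback) shows $m_H$ extends to $a: H \times_k G \to H \subset Y$ covering $a_X|_{G \times_k G}$. Then I would use Lemma \ref{gr-nor} / Proposition \ref{fib-int}-style normality arguments: $H \times_k Y$ is smooth (hence normal) and geometrically integral, $Y$ is separated, and the two maps $H \times_k Y \to X$ given by $(h,y) \mapsto a_X(\rho_G(h), \rho_X(y))$ agree; one wants to lift this to $Y$. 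The map $a$ is defined on the dense open $H \times_k G$, and its graph closure in $(H \times_k Y) \times_X Y$ is a candidate for the graph of $a_Y$; normality of $H \times_k Y$ plus the torsor structure (the fibers of $Y \to X$ being torsors under the connected group $T$) should force this closure to be the graph of an honest morphism.

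The technical heart, and the step I expect to be the main obstacle, is showing that the partially-defined action extends to a morphism on all of $H \times_k Y$, i.e. that there is no codimension-one locus of indeterminacy. The right tool is the universal property of the universal torsor combined with the fact that extending a rational map to a torsor under a torus from a normal variety is governed by $\Pic$: since $\Pic(Z_{\bar k}) = 0$ and $\bar k[Z]^\times = \bar k^\times$, a torsor-morphism $Z \to Z$ over a morphism $X^c \to X^c$ is rigid once normalized at one point, and there is no obstruction to spreading it out. I would formalize this by pulling back $Z$ along $m: H \times_k Z \to X^c$, $(h,z)\mapsto a_{X^c}(\rho_G(h),\rho(z))$, noting $m^* Z$ and $\mathrm{pr}_{Z}^* Z$ are torsors under $T$ over the smooth geometrically integral $H \times_k Z$ with the same type in $H^1$, comparing them via the identity section over $H \times_k H$, and descending the resulting $T$-torsor isomorphism $\mathrm{pr}_Z^* Z \xrightarrow{\sim} m^* Z$ to the desired map $a_Z: H \times_k Z \to Z$; restricting $a_Z$ to $H \times_k Y = H \times_k (Z\times_{X^c}X)$ lands in $Y$ by construction and gives $a_Y$, with the extension of $m_H$ built into the normalization choice. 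Uniqueness of such extensions (again by separatedness of $Y$ and density of $H \times_k H$) guarantees that $a_Y$ restricted to $H \times_k H$ is indeed $m_H$.
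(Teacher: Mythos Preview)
There is a real gap: your first and third paragraphs rely on a $G$-action $a_{X^c}$ on $X^c$, but the paper never assumes $X^c$ is $G$-equivariant. Only $X$ is a $G$-groupic variety; $X^c$ is merely \emph{a} smooth compactification of it as a variety. So the map $m\colon H \times_k Z \to X^c$, $(h,z)\mapsto a_{X^c}(\rho_G(h),\rho(z))$ in your final paragraph is not defined, and the opening plan to lift an action $a_{X^c}$ fails for the same reason. Your middle paragraph does use the correct map $a_X$ on $X$, but the graph-closure step there is not an argument as written: a rational map from a normal variety into a $T$-torsor need not extend across divisors, and what would force the extension is precisely the vanishing of $\Pic((H\times_k Y)_{\bar k})$, an input you only invoke later and only in the $Z\to X^c$ setting where it cannot be used.

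The paper's proof avoids $X^c$ entirely at this step. It works directly with $a_X\colon G\times_k X\to X$ and lifts it along the $T$-torsor $Y\to X$ by citing the rigidity lemma for torsors under groups of multiplicative type (Lemme~5.5 of \cite{CT08}), following verbatim the argument of Th\'eor\`eme~5.6 of \cite{CT08} with $m_G$ replaced by $a_X$. Your torsor-comparison idea is essentially the content of that lemma, and the fix is simply to run it for $Y\to X$ and the map $H\times_k Y\to X$, $(h,y)\mapsto a_X(\rho_G(h),\rho_X(y))$, rather than for $Z\to X^c$; once the lift $a_Y$ exists and restricts to $m_H$ on $H\times_k H$, density of $H$ in $Y$ gives the action axioms, as the paper notes.
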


\begin{proof} One only needs to modify the argument in Theorem 5.6 in \cite{CT08}  and replace $$m_G: G\times_k G \rightarrow G \ \ \ \text{ by } \ \ \ a_X: G\times_k X \rightarrow X$$ with $a_X|_{G\times_k G}=m_G$. By Lemma 5.5 in \cite{CT08}, there is a morphism
$$ a_Y: \ H\times_k Y \rightarrow Y  $$ such that the following diagram

\[ \begin{CD}
H \times_k Y @ >{a_Y}>> Y \\
@ V{\rho_G\times \rho_X}VV  @ VV{\rho_X}V  \\
G \times_k X @>>{a_X}> X
\end{CD}
\]
commutes and $ a_Y|_{H\times_k H}= m_H $. Since $H$ is dense in $Y$, the associativity of $m_H$ implies that $a_Y$ is an action of $H$.  \end{proof}

It is clear that the following diagram
 $$ \begin{CD} @. @. 1 @. 1 \\
@. @. @ VVV @VVV\\
1 @>>> R_u(G) @>>>   \ker(\varsigma_G)  @ >>> G^{ss} @>>> 1\\
@. @V{\cong}VV   @VVV  @VVV \\
1 @>>> R_u(G) @>>> G @>>> G^{red} @>>> 1 \\
@. @. @V{\varsigma_G}VV @VV{\varsigma_{G^{red}}}V \\
@. @. G^{tor} @>{\cong}>> (G^{red})^{tor} \\
@. @. @VVV @VVV \\
@.@. 1 @. 1
\end{CD} $$
commutes and that its columns and rows are exact. Therefore $ker(\varsigma_G)$ is geometrically integral whenever $G$ is connected.

Since
$$ (H^{tor})^*= \bar k[H]^\times /\bar k^\times= \Div_{X_{\bar k} \setminus G_{\bar k}}(X_{\bar k}) \oplus \Div_{X^c_{\bar k}\setminus X_{\bar k}}(X^c_{\bar k})$$  as $\Gamma_k$-module by (\ref{uh}), one has $H^{tor}=T_0\times_k T_1$ where $T_0$ and $T_1$ are tori over $k$ such that $$T_0^*=\Div_{X_{\bar k} \setminus G_{\bar k}}(X_{\bar k}) \cong \Div_{Y_{\bar k} \setminus H_{\bar k}}(Y_{\bar k}) \ \ \ \text{and} \ \ \ T_1^*= \Div_{X^c_{\bar k}\setminus X_{\bar k}}(X^c_{\bar k})$$
by Lemma B.1 in \cite{CT08}.
Moreover, the inclusion $\iota_0: T_0\hookrightarrow H^{tor}$ is induced by
\begin{equation}\label{iota0} \iota_0^*: (H^{tor})^*= \bar k[H]^\times /\bar k^\times\xrightarrow{div} \Div_{Y_{\bar k} \setminus H_{\bar k}}(Y_{\bar k})\cong T_0^*.\end{equation}

\begin{lem} \label{h_0} Let $\psi$ be the surjective homomorphism $H\xrightarrow{\psi} T_1$ obtained by composing $\varsigma_H$ with the projection on $T_1$ and $H_0=ker(\psi)$. Then $H_0$  and $\ker(\varsigma_{H_0})$ are connected and quasi-trivial with the canonical isomorphisms
$$H_0^{ss}\xrightarrow{\cong} H^{ss}\xrightarrow{\cong}G^{sc} \ \ \ \text{and} \ \ \  \chi^*:  T_0^*\xrightarrow{\cong} \bar k[H_0]^\times /\bar k^\times . $$  \end{lem}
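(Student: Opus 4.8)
The lemma is a statement about the structure of the connected linear algebraic group $H_0=\ker(\psi)$, and the plan is to compare its unipotent radical, its semisimple part and its maximal torus quotient with those of $H$, exploiting that $H$ is quasi-trivial and that $\psi$ only divides out the torus factor $T_1$ of $H^{tor}$. First I would record that, since $\psi=pr_{T_1}\circ\varsigma_H$ factors through $\varsigma_H$, one has $\ker(\varsigma_H)\subseteq H_0$ and $\varsigma_H(H_0)=T_0\times 1$, so that $H_0$ sits in an extension $1\to\ker(\varsigma_H)\to H_0\to T_0\to 1$; as $\ker(\varsigma_H)$ is geometrically integral (the commutative diagram displayed above for $G$ applies verbatim to the connected group $H$), in particular connected, and $T_0$ is a torus, $H_0$ is connected.

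Next I would prove $R_u(H_0)=R_u(H)$. Since $T_1$ is a torus, $\psi(R_u(H))=1$, so $R_u(H)$ is a connected normal unipotent subgroup of $H_0$, giving $R_u(H)\subseteq R_u(H_0)$; conversely $R_u(H_0)$ is characteristic in $H_0$, hence normal, connected and unipotent in $H$, so $R_u(H_0)\subseteq R_u(H)$. In particular $R_u(H)\subseteq H_0$ and $H^{red}/H_0^{red}\cong H/H_0\cong T_1$. Now fix a Levi decomposition $H=R_u(H)\rtimes L$ with a Levi subgroup $L_0\subseteq L$ of $H_0$, so $L\cong H^{red}$, $L_0\cong H_0^{red}$ and $L/L_0\cong T_1$. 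Because $T_1$ is abelian, the derived group $D(L)\cong H^{ss}$ maps trivially to $L/L_0$, hence $D(L)\subseteq L_0$; writing the reductive group $L$ as $L=Z(L)^{\circ}\cdot D(L)$, the modular law gives $L_0=(L_0\cap Z(L)^{\circ})\cdot D(L)$, and since $D(L)$ is perfect while $L_0\cap Z(L)^{\circ}$ is central we get $D(L_0)=D(L)$, i.e.\ the canonical map $H_0^{ss}\to H^{ss}$ is an isomorphism. Combined with the isomorphism $H^{ss}\xrightarrow{\cong}G^{sc}$ furnished by the flasque resolution (\ref{flasque}) (here one uses that the quasi-trivial group $H$ has $\Pic(H_{\bar k})=0$, so $H^{ss}$ is simply connected), this yields $H_0^{ss}\xrightarrow{\cong}H^{ss}\xrightarrow{\cong}G^{sc}$.

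It then remains to identify $H_0^{tor}$, compute the invertible functions, and deduce quasi-triviality. Since $\ker(\varsigma_{H_0})$ is the subgroup of $H_0$ generated by $R_u(H_0)$ and $H_0^{ss}$, the previous two paragraphs give $\ker(\varsigma_{H_0})=\ker(\varsigma_H)$; in particular $\ker(\varsigma_{H_0})$ is connected, and since its semisimple part is $G^{sc}$ and its torus quotient is trivial it is quasi-trivial. As $\ker(\varsigma_H)\subseteq H_0$ and $\varsigma_H(H_0)=T_0\times 1$, passing to the quotient gives $H_0^{tor}=H_0/\ker(\varsigma_{H_0})\cong T_0$, with the canonical projection equal to $\chi:=pr_{T_0}\circ\varsigma_H|_{H_0}\colon H_0\to T_0$. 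By Rosenlicht's theorem $\bar k[H_0]^{\times}/\bar k^{\times}\cong(H_0^{tor})^{*}=T_0^{*}$ via $\chi^{*}$, and $T_0^{*}=\Div_{X_{\bar k}\setminus G_{\bar k}}(X_{\bar k})$ is a permutation $\Gamma_k$-module, being free on the set of codimension-$1$ irreducible components of $X_{\bar k}\setminus G_{\bar k}$ permuted by $\Gamma_k$. Finally, for a connected linear algebraic group the Picard group of the underlying $\bar k$-variety depends only on the semisimple part and is dual to its fundamental group (see e.g.\ \cite{CTS87}); since $H_0^{ss}\cong G^{sc}$ is simply connected, $\Pic(H_{0,\bar k})=0$, and together with the permutation-module computation this shows $H_0$ is quasi-trivial.

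I expect the only genuinely delicate point to be the equality $H_0^{ss}=H^{ss}$ inside $H$; once this (together with $R_u(H_0)=R_u(H)$) is established, all the remaining assertions follow formally from the structure theory of linear algebraic groups in characteristic $0$ and the properties of the flasque resolution (\ref{flasque}) already recorded above. The Levi-subgroup argument, resting on the fact that $D(L)$ dies in the torus quotient $L/L_0$, seems the cleanest route to it.
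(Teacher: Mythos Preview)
Your argument is correct and in substance coincides with the paper's: both proofs rest on the identification $\ker(\varsigma_{H_0})=\ker(\varsigma_H)$ and then read off all the assertions (connectedness, $H_0^{ss}\cong H^{ss}\cong G^{sc}$, $H_0^{tor}\cong T_0$, quasi-triviality) from it together with the known properties of the quasi-trivial group $H$. The only difference is packaging: the paper obtains $\ker(\chi)\cong\ker(\varsigma_H)$ in one stroke from the commutative diagram with exact rows and columns
\[
\begin{CD}
@. 1 @. 1 @. @.\\
@. @VVV @VVV @. @.\\
@. \ker(\chi) @>{\cong}>> \ker(\varsigma_H) @. @.\\
@. @VVV @VVV @. @.\\
1 @>>> H_0 @>>> H @>{\psi}>> T_1 @>>> 1\\
@. @V{\chi}VV @VV{\varsigma_H}V @VV{=}V @.\\
1 @>>> T_0 @>>> H^{tor} @>>> T_1 @>>> 1
\end{CD}
\]
and then deduces $\Pic((H_0)_{\bar k})=0$ and $\Pic(\ker(\varsigma_{H_0})_{\bar k})=0$ by two applications of Sansuc's exact sequence (6.11.4), whereas you unpack the same identification by hand via Levi subgroups (proving $R_u(H_0)=R_u(H)$ and $D(L_0)=D(L)$ separately) and deduce $\Pic=0$ from the description of $\Pic(G_{\bar k})$ in terms of $\pi_1(G^{ss})$. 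Your route is slightly more explicit but longer; the paper's diagram is more economical and avoids choosing Levi subgroups.
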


\begin{proof} By P.94 in \cite{CT08}, one has $H^{ss}\cong H^{sc}\xrightarrow{\cong} G^{sc}$. It is clear that there is a surjective homomorphism $H_0\xrightarrow{\chi} T_0$ over $k$ such that the following diagram

$$  \begin{CD} @. 1 @. 1 \\
@. @VVV  @VVV\\
@. \ker(\chi) @>{\cong}>> \ker(\varsigma_H)   \\
@.    @VVV  @VVV \\
1 @>>> H_0 @>>> H @>{\psi}>> T_1 @>>> 1 \\
@. @V{\chi}VV @VV{\varsigma_{H}}V @VV{=}V \\
1@>>> T_0 @>{\iota_0}>> H^{tor} @>{\psi}>> T_1 @>>> 1 \\
@. @VVV @VVV \\
@. 1 @. 1
\end{CD} $$
commutes and has exact rows and columns. One concludes that $\ker(\chi)$ is connected and $\ker(\chi)\cong \ker(\varsigma_{H_0})$.
Therefore $H_0$ is connected and $H_0^{ss}\xleftarrow{\cong}\ker(\chi)^{ss}\xrightarrow{\cong}H^{ss}$.
Since $\Pic(H_{\bar{k}})=0$, one further has that $\Pic((H_0)_{\bar k})=0$, and then $\Pic(\ker(\varsigma_{H_0})_{\bar k}))=0$ by (6.11.4) in \cite{Sansuc}. \end{proof}

\begin{lem}\label{constant}
If $\bar{k}[X]^\times =\bar k^\times$, then $H_0\xrightarrow{\rho_G} G$ is surjective and its kernel is a group of multiplicative type.
\end{lem}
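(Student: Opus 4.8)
The plan is to exploit the assumption $\bar k[X]^\times = \bar k^\times$ to pin down the tori $T_0$ and $T_1$ and then chase the diagram in Lemma \ref{h_0}. First I would observe that $\bar k[X]^\times = \bar k^\times$ forces $\bar k[G]^\times = \bar k^\times$ as well (since $G$ is open in $X$ and invertible functions on an open subset of a normal variety are controlled by divisors at the boundary, but here the global units are already trivial), hence by (\ref{uh}) the summand $\Div_{X_{\bar k}\setminus G_{\bar k}}(X_{\bar k})$ records exactly those boundary divisors of $X$ inside $X^c$ that happen to lie in the complement of $G$; what matters is that $T_0^* = \Div_{X_{\bar k}\setminus G_{\bar k}}(X_{\bar k})$ is a permutation module whose generators correspond to the irreducible components of $X_{\bar k}\setminus G_{\bar k}$. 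The key point is that, under the hypothesis $\bar k[X]^\times=\bar k^\times$, the natural map $\bar k[H]^\times/\bar k^\times \to \Div_{Y_{\bar k}\setminus H_{\bar k}}(Y_{\bar k})$ of (\ref{iota0}) becomes injective with finite cokernel, equivalently $\iota_0^*$ restricted to appropriate summands is an isogeny, so that $\ker(\chi)$, which equals $\ker(\varsigma_{H_0})$ by Lemma \ref{h_0}, has trivial torus quotient.

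Concretely, the second step is to identify $\ker(\varsigma_{H_0})$. From Lemma \ref{h_0} we already know $\ker(\varsigma_{H_0})\cong\ker(\chi)$ is connected with $\Pic(\ker(\varsigma_{H_0})_{\bar k})=0$ and $\ker(\varsigma_{H_0})^{ss}\cong H_0^{ss}\cong G^{sc}$. To show $\rho_G\colon H_0\to G$ is surjective, I would compose $H_0\hookrightarrow H\xrightarrow{\rho_G} G$ and check the image is all of $G$: since $\psi\colon H\to T_1$ factors through $\varsigma_H$ and $T_1^* = \Div_{X^c_{\bar k}\setminus X_{\bar k}}(X^c_{\bar k})$ consists precisely of the boundary divisors added in the compactification (disjoint from $X$, hence a fortiori not affecting $G$), the composite $H\xrightarrow{\rho_G} G\xrightarrow{\varsigma_G} G^{tor}$ kills nothing coming from $T_1$; more precisely $\ker\psi = H_0$ surjects onto $G$ because $\ker(\rho_G) = T = \ker(\varsigma_{X^c})$-part sits inside $H_0$ — one needs that $T\cap H_0$ still surjects onto... actually the cleanest route is: $\rho_G(H_0)$ is a normal subgroup of $G$ (as $H_0\triangleleft H$ and $\rho_G$ surjective), and the quotient $G/\rho_G(H_0)$ is a quotient of $T_1$ via the induced map; but $G/\rho_G(H_0)$ is also a quotient of $G^{tor}$, whose character group is $\bar k[G]^\times/\bar k^\times = \Div_{X_{\bar k}\setminus G_{\bar k}}(X_{\bar k})$ (using $\bar k[X]^\times=\bar k^\times$ to kill any contribution), and this has no common summand with $T_1^*$ since the corresponding boundary divisors are disjoint. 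Hence $G/\rho_G(H_0)$ is trivial, i.e. $\rho_G|_{H_0}$ is surjective.

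For the final claim, that $\ker(\rho_G|_{H_0})$ is of multiplicative type: its kernel is a subgroup of $\ker(\rho_G) = T$, hence a group of multiplicative type automatically, being a closed subgroup of a torus. So once surjectivity is established this part is immediate. I would close by remarking that $\ker(\rho_G|_{H_0})$ is in fact $T\cap H_0 = \ker(\psi|_T)$, the kernel of the composite $T\xrightarrow{\kappa} H\xrightarrow{\psi} T_1$, which is a group of multiplicative type of character group $\operatorname{coker}(\psi^*\colon T_1^*\to T^*)$.

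The main obstacle I anticipate is the bookkeeping needed to see that the boundary divisors of $X$ in $X^c$ (contributing to $T_1^*$) are genuinely "independent" from the boundary divisors of $G$ in $X$ (contributing to $T_0^*$) at the level of the character lattices and the maps $\iota_0$, $\psi$, $\chi$ — i.e. verifying that the surjection $G/\rho_G(H_0)\twoheadleftarrow T_1$ is forced to be trivial. This amounts to unwinding the isomorphisms (\ref{uh}), (\ref{uy}), (\ref{iota0}) and the diagram of Lemma \ref{h_0} carefully, and it is exactly here that the hypothesis $\bar k[X]^\times=\bar k^\times$ does its work (without it $\bar k[G]^\times/\bar k^\times$ would acquire extra generators mixing the two pieces). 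Everything else — connectedness, the $\Pic=0$ statements, the semisimple part — has already been supplied by Lemma \ref{h_0}, and "closed subgroup of a torus is of multiplicative type" is standard.
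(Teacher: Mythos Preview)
Your overall architecture is right and close to the paper's: once you know $\ker(\rho_G|_{H_0}) = H_0 \cap T$ is a closed subgroup of the torus $T$, multiplicative type is automatic; and surjectivity of $\rho_G|_{H_0}$ is equivalent to surjectivity of $\psi\circ\kappa\colon T\to T_1$, since $G/\rho_G(H_0)\cong T_1/(\psi\circ\kappa)(T)$, which is exactly what the paper proves. But two steps in your write-up do not hold. The opening claim that $\bar k[X]^\times=\bar k^\times$ forces $\bar k[G]^\times=\bar k^\times$ is false (take $X=\Bbb A^1$, $G=\Bbb G_m$); the hypothesis only makes $div\colon \bar k[G]^\times/\bar k^\times \to \Div_{X_{\bar k}\setminus G_{\bar k}}(X_{\bar k})$ \emph{injective}, and indeed you later treat $(G^{tor})^*=\bar k[G]^\times/\bar k^\times$ as nontrivial, contradicting your own first sentence.

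The substantive gap is the ``disjoint boundary divisors'' step. You want $\rho_G^*((G^{tor})^*)\cap \psi^*(T_1^*)=0$ inside $(H^{tor})^*$, but $\rho_G^*$ need not land in the summand $T_0^*$: under the identification $(H^{tor})^*\cong\Div_{X^c_{\bar k}\setminus G_{\bar k}}(X^c_{\bar k})$ of (\ref{uh}), $\rho_G^*$ sends $f\in\bar k[G]^\times$ to $div_{X^c}(f)$, which typically has components on both $X\setminus G$ and $X^c\setminus X$. In the example $G=\Bbb G_m\subset X=\Bbb A^1\subset X^c=\Bbb P^1$ one finds $(H^{tor})^*=\Bbb Z a\oplus\Bbb Z b$ with $T_0^*=\Bbb Z a$, $T_1^*=\Bbb Z b$, and $\rho_G^*$ sends the coordinate on $\Bbb G_m$ to $a-b$, lying in neither summand. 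What actually gives the trivial intersection is that the \emph{projection} $\iota_0^*\colon (H^{tor})^*\to T_0^*$ composed with $\rho_G^*$ agrees with the injective divisor map $\bar k[G]^\times/\bar k^\times\to\Div_{X_{\bar k}\setminus G_{\bar k}}(X_{\bar k})\cong T_0^*$; since $\ker(\iota_0^*)=\psi^*(T_1^*)$, injectivity of this composite yields the trivial intersection you need. This compatibility is precisely the paper's commutative diagram, and it is the place where the hypothesis $\bar k[X]^\times=\bar k^\times$ genuinely enters.
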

\begin{proof} Since $\bar{k}[X]^\times /\bar k^\times=1$, the morphism 
$\bar{k}[G]^\times/\bar{k}^\times \xrightarrow{div}  
\Div_{X_{\bar k} \setminus G_{\bar k}}(X_{\bar k})$ is injective. 
Since one has the following commutative diagram
\[ \begin{CD} \bar{k}[G]^\times /\bar k^\times @>{\rho_G^*}>> \bar {k}[H]^\times/\bar k^\times @>>> \bar{k}[H_0]^\times/\bar k^\times \\
@V{div}VV @VVV @AA{\chi^*}A \\
\Div_{X_{\bar k} \setminus G_{\bar k}}(X_{\bar k}) @>>{\rho_X^*}>  \Div_{Y_{\bar k} \setminus H_{\bar k}}(Y_{\bar k}) @>>{\cong}> T_0^* \end{CD} \] by Lemma \ref{h_0}, one obtains that 
$\bar{k}[G]^\times /\bar k^\times \xrightarrow{\rho_G^*} \bar{k}[H_0]^\times/\bar k^\times$ is injective. By tracing the following diagram with the exact rows
\[ \begin{CD}
1 @>>> \bar{k}[G]^\times /\bar k^\times @>{\rho_G^*}>> \bar {k}[H]^\times/\bar k^\times @>\kappa^*>> T^* \\
@. @. @VV{=}V @. \\
1 @>>> T_1^* @>>{\psi^*}> \bar {k}[H]^\times/\bar k^\times @>>> \bar{k}[H_0]^\times/\bar k^\times @>>> 1 ,\end{CD} \] one obtains that $\kappa^*\circ \psi^*$ is injective. Therefore $T\xrightarrow{\psi\circ \kappa} T_1$ is surjective. By tracing the following diagram with the exact rows
\[ \begin{CD}
1 @>>> T @>{\kappa}>> H @>\rho_G>> G @>>> 1 \\
@. @. @VV{=}V @. \\
1 @>>> H_0 @>>> H @>>{\psi}> T_1 @>>> 1 ,\end{CD} \] one obtains that $H_0\xrightarrow{\rho_G} G $ is surjective and its kernel is a group of multiplicative type.   
\end{proof}

\begin{prop} \label{br} One has the following exact sequence
$$ 1 \rightarrow  \Br_1(X)\rightarrow \Br_1(G) \rightarrow \Br_a (H_0) $$ for a smooth $G$-groupic variety $(G\hookrightarrow X)$ over $k$, where the map $\Br_1(G)\rightarrow \Br_a(H_0)$ is given by the map $H_0\rightarrow H\xrightarrow{\rho_G} G$.
\end{prop}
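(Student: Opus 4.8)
The plan is to build the exact sequence by comparing the low-degree terms of the Hochschild–Serre spectral sequence for the torsor $\rho_X\colon Y\to X$ with those for the torsor $H_0\to G$, using the identification $Y=Z\times_{X^c}X$ and the facts already established: $\bar k[Y]^\times=\bar k^\times$, $\Pic(Y_{\bar k})=0$ (so $Y$ is quasi-trivial), $H=Z\times_{X^c}G\subset Y$ is open and dense, and the action $a_Y\colon H\times_kY\to Y$ from Lemma \ref{a_Y} restricting to $m_H$ on $H\times_kH$. First I would recall the standard description (as in \cite{CTS87}, \cite{Sko}) of $\Br_1(X)/\Br(k)=\Br_a(X)$ in terms of $X_{\bar k}$: since $\bar k[X]^\times=\bar k^\times$ there is an exact sequence $0\to\Pic(X_{\bar k})\to \Pic(Y_{\bar k})=0$, hence $\Pic(X_{\bar k})$ injects into the character group of $T$, and one gets a functorial injection $\Br_a(X)\hookrightarrow H^1(k,\Pic(X_{\bar k}))$ which fits into the descent exact sequence relating $\Br_a(X)$, $H^1(k,T^*)$ and the type map; the same machine applies to $G$ with $H$ in place of $Y$, and to $G$ with $H_0$ in place of $H$ (via Lemma \ref{constant}, whose hypothesis $\bar k[X]^\times=\bar k^\times$ is in force, $H_0\to G$ is a surjection with kernel of multiplicative type, so it is an admissible torsor for the descent formalism).

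The key step is the middle identification. Using the open immersion $G\hookrightarrow X$ one has the localization sequence
\[
0\to \bar k[X]^\times/\bar k^\times \to \bar k[G]^\times/\bar k^\times \xrightarrow{\ \divisor\ } \Div_{X_{\bar k}\setminus G_{\bar k}}(X_{\bar k})\to \Pic(X_{\bar k})\to \Pic(G_{\bar k}),
\]
and because $\bar k[X]^\times/\bar k^\times=1$ the map $\divisor$ is injective; combined with the isomorphism $\bar k[H_0]^\times/\bar k^\times\cong T_0^*=\Div_{X_{\bar k}\setminus G_{\bar k}}(X_{\bar k})$ of Lemma \ref{h_0} and the commutative square displayed in the proof of Lemma \ref{constant}, one sees that $\ker\bigl[\Pic(X_{\bar k})\to\Pic(G_{\bar k})\bigr]$ is exactly the cokernel of $\bar k[G]^\times/\bar k^\times \hookrightarrow \bar k[H_0]^\times/\bar k^\times$. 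Dualizing in Galois cohomology and feeding this into the two descent exact sequences (for $Y\to X$ and for $H_0\to G$), I would chase the resulting diagram to obtain $\Br_1(X)=\ker\bigl[\Br_1(G)\to \Br_a(H_0)\bigr]$, where the map is pullback along $H_0\to H\xrightarrow{\rho_G}G$; the inclusion $\Br_1(X)\subseteq\Br_1(G)$ comes simply from restriction along $G\hookrightarrow X$ and is injective because $G$ is dense in $X$ and both are smooth (so $\Br(X)\to\Br(G)$ is injective, already a consequence of $\Pic(X_{\bar k})\hookrightarrow\Pic(Y_{\bar k})=0$ type arguments applied together with purity / density).

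The main obstacle I anticipate is bookkeeping the compatibility of the various type maps: one must check that the map $H^1(k,\Pic(X_{\bar k}))\to H^1(k,\bar k[H_0]^\times/\bar k^\times)$ induced by the geometric computation above is genuinely the one computing the composite $\Br_a(X)\to\Br_a(G)\to\Br_a(H_0)$, i.e.\ that pulling a Brauer class back to $H_0$ and then taking its type agrees with first taking the type on $X$ and transporting it via the geometric sequence — this is the kind of statement that is true "by functoriality of descent" but requires care because $H_0\to G$ is a torsor under a group of multiplicative type rather than a full torus, so one should invoke the extension of the descent formalism (e.g.\ as in \cite{CT08}, \cite{Sko}, or \cite{HaSk}) to groups of multiplicative type. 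A secondary technical point is the exactness on the left: one needs $\Br(k)\to\Br(X)$ to be injective (clear, $X$ has a $k$-point, e.g.\ $1_G\in G(k)\subset X(k)$) and the snake-lemma chase to not lose a term, which should follow once all the maps are lined up. Everything else is a routine diagram chase in the spirit of the proof of Lemma \ref{constant}.
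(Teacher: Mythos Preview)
Your proposal has a genuine gap: you repeatedly assume $\bar k[X]^\times=\bar k^\times$, but this is \emph{not} a hypothesis of Proposition~\ref{br}. You use it to get $\Br_a(X)\hookrightarrow H^1(k,\Pic(X_{\bar k}))$, to make the divisor map on units injective, and --- most critically --- to invoke Lemma~\ref{constant} so that $H_0\to G$ is surjective with kernel of multiplicative type; your entire descent comparison for the pair $(H_0,G)$ rests on treating $H_0\to G$ as a torsor. But Lemma~\ref{constant} explicitly carries $\bar k[X]^\times=\bar k^\times$ as a hypothesis, and without it $H_0\to G$ need not even be surjective. For instance, take $G=\Bbb G_m^2$, $X=\Bbb G_m\times_k\Bbb A^1_k$, $X^c=\Bbb P^1_k\times_k\Bbb P^1_k$: then $\dim T_1=3$ and $\dim H=4$, so $\dim H_0=1<2=\dim G$. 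Thus your argument, as written, establishes only the special case that is actually needed in \S\ref{ss}, not the proposition as stated.

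The paper avoids this by never treating $H_0\to G$ as a torsor. It starts from Sansuc's exact sequence
\[
1\to\Br_1(X)\to\Br_1(G)\to H^2\bigl(k,\Div_{X_{\bar k}\setminus G_{\bar k}}(X_{\bar k})\bigr),
\]
valid for the open immersion $G\hookrightarrow X$ of smooth varieties with a rational point, with no condition on $\bar k[X]^\times$. It then identifies the $H^2$ term with $\Br_a(H_0)$ using only that $\Pic((H_0)_{\bar k})=0$ and $\bar k[H_0]^\times/\bar k^\times\cong T_0^*=\Div_{X_{\bar k}\setminus G_{\bar k}}(X_{\bar k})$ from Lemma~\ref{h_0}, via the Hochschild--Serre isomorphism $\Br_a(H_0)\cong H^2(k,\bar k[H_0]^\times/\bar k^\times)$. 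That the resulting map $\Br_1(G)\to\Br_a(H_0)$ coincides with pullback along $H_0\to H\xrightarrow{\rho_G}G$ is read off from the functorial square for $(Y\to X,\ H\to G)$ together with the description of $\iota_0^*$ in (\ref{iota0}) as the divisor map. No descent sequence for $H_0\to G$, no $H^1(k,\Pic)$, and no assumption on $\bar k[X]^\times$ are needed.
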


\begin{proof} On the one hand, one has the following commutative diagram of exact sequences
\[ \begin{CD}
1 @ >>>  \Br_1(X) @ >>> \Br_1(G)  @ >>>  H^2(k, \Div_{X_{\bar k} \setminus G_{\bar k}}(X_{\bar k})) \\
@.  @ V{\rho_X^*}VV @ V{\rho_G^*}VV @ VV{\cong}V \\
1 @ >>> \Br_1(Y) @>>> \Br_1(H) @ >>> H^2(k, \Div_{Y_{\bar k}\setminus H_{\bar k}} (Y_{\bar k}))
\end{CD}
\]
by (6.1.3) of Lemma 6.1 in \cite{Sansuc} and functoriality.

On the other hand, since $\Pic((H_0)_{\bar k})=\Pic(H_{\bar k})=0$ by Lemma \ref{h_0}, the homomorphism $H_0 \rightarrow H$ induces the following commutative diagram
\[ \begin{CD}H^2(k, {H^{tor}}^*) @>{\cong}>> H^2(k, \bar k[H]^\times/\bar{k}^{\times}) @>{\cong}>>  \Br_a(H) \\
 @V{\iota_0^*}VV@VVV @VVV \\
  H^2(k,T_0^*)@>{\chi^*}>{\cong}> H^2(k, \bar k[H_0]^\times/\bar{k}^{\times})@>>{\cong}> \Br_a(H_0)
 \end{CD} \]  by Hochschild-Serre spectral sequence and  Lemma \ref{h_0}.
Since the composition of morphisms
$$H^2(k, \bar k[H]^\times/\bar{k}^{\times})\rightarrow  \Br_a(H)\rightarrow H^2(k, \Div_{Y_{\bar k}\setminus H_{\bar k}} (Y_{\bar k}))$$
 is induced by $\bar k[H]^\times/\bar{k}^{\times}\xrightarrow{div} \Div_{Y_{\bar k}\setminus H_{\bar k}} (Y_{\bar k})$, the result follows from (\ref{iota0}).
 \end{proof}

\begin{lem} \label{ext} The homomorphism $\psi$ in Lemma \ref{h_0} can be extended to a smooth $H$-morphism
$Y \xrightarrow{\psi} T_1$ over $k$ with geometrically integral fibers. \end{lem}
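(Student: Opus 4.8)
The plan is to extend the homomorphism $\psi: H \to T_1$ to a morphism $Y \to T_1$ by using the action $a_Y$ of $H$ on $Y$ constructed in Lemma \ref{a_Y}, and then to apply the structural results of \S 2 to get smoothness and geometric integrality of the fibers. First I would observe that $T_1$ is itself a $T_1$-groupic variety (a torus acting on itself), so $\psi: H \to T_1$ is a morphism of groupic varieties in the sense of Definition \ref{tv}, hence compatible with the group actions: $\psi \circ a_H = m_{T_1} \circ (\psi \times \psi)$ on $H \times_k H$. The key point is to produce the extension of $\psi$ to all of $Y$. One natural way: since $H$ is open dense in $Y$ and $Y$ is quasi-trivial with $\bar k[Y]^\times/\bar k^\times \cong \Div_{X^c_{\bar k} \setminus X_{\bar k}}(X^c_{\bar k}) = T_1^*$ by (\ref{uy}), and $\psi$ corresponds (via $\varsigma_H$ followed by projection) exactly to the characters in $T_1^* \subset \bar k[H]^\times/\bar k^\times$, these invertible functions on $H$ actually extend to invertible functions on $Y$ (because $\bar k[Y]^\times / \bar k^\times$ surjects onto, indeed equals, this summand). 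So the morphism $H \to T_1$ given by these units extends to $Y \to T_1$; one checks this extension is defined over $k$ by Galois descent, using that $T_1^*$ is a $\Gamma_k$-submodule of $\bar k[Y]^\times/\bar k^\times$.

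Next I would verify that this extended morphism $Y \xrightarrow{\psi} T_1$ is an $H$-morphism, i.e. $\psi \circ a_Y = m_{T_1} \circ (\psi \times \psi)$ where $H$ acts on $T_1$ through $\psi: H \to T_1$ and left translation. Both sides are morphisms $H \times_k Y \to T_1$ which agree on the dense open $H \times_k H$ (where they both restrict to the compatibility of the homomorphism $\psi$ with multiplication); since $T_1$ is separated and $H \times_k Y$ is integral (as $H$ is geometrically integral and $Y$ is geometrically integral, $Y$ being quasi-trivial), two morphisms to a separated scheme agreeing on a dense open are equal. Hence $\psi: Y \to T_1$ is an $H$-morphism of geometrically integral $H$-varieties.

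Finally, for the smoothness and geometric integrality of the fibers: $T_1 = H/H_0$ by the defining exact sequence $1 \to H_0 \to H \xrightarrow{\psi} T_1 \to 1$ of Lemma \ref{h_0}, where $H_0$ is connected (Lemma \ref{h_0}) and $H$ is a connected linear algebraic group, and $T_1$ is affine and smooth. So $\psi: Y \to T_1$ is precisely a morphism of geometrically integral $H$-varieties with affine smooth target of the form $H/H_0$ with $H, H_0$ connected linear algebraic groups. Since $Y$ is smooth over $k$ (as $X$ is smooth and $Y \to X$ is a torsor under a torus, hence smooth), Proposition \ref{fib-int} applies verbatim and yields that all fibers of $\psi$ are nonempty and geometrically integral and that $\psi$ is smooth. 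The main obstacle I anticipate is the very first step — rigorously justifying that $\psi$ extends from $H$ to $Y$, i.e. matching the units on $H$ cutting out $\psi$ with the units on $Y$ from (\ref{uy}) in a way that respects the $\Gamma_k$-action and gives a morphism over $k$ (rather than just over $\bar k$); everything after that is formal, reducing to the already-proved Proposition \ref{fib-int} via the identification $T_1 = H/H_0$.
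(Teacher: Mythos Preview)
Your proposal is correct and follows essentially the same route as the paper: extend $\psi$ by identifying $T_1^*$ with $\bar k[Y]^\times/\bar k^\times$ inside $\bar k[H]^\times/\bar k^\times$ via (\ref{uy}), check $H$-equivariance by density of $H\times_k H$ in $H\times_k Y$, then invoke Proposition \ref{fib-int} with $T_1=H/H_0$. The only refinement the paper adds is to handle your anticipated obstacle (the descent to $k$) by normalizing units at $1_H$: setting $B_Y=\{b\in\bar k[Y]^\times: b(1_H)=1\}$ and $B_H$ similarly gives canonical $\Gamma_k$-equivariant splittings $\bar k[Y]^\times=\bar k^\times\oplus B_Y$, $\bar k[H]^\times=\bar k^\times\oplus B_H$ with $B_Y\subset B_H$, so that $\psi^*$ lands in $B_Y$ on the nose and the factorization $\bar k[T_1]=\bar k[B_Y]\to\bar k[Y]$ is visibly Galois-equivariant.
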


\begin{proof} Let
$$ B_Y=\{ b\in \bar k[Y]^\times : \ b(1_H)=1 \} \ \ \ \text{and} \ \ \  B_H=\{ b \in \bar k[H]^\times : b(1_H)=1 \} $$ be two $\Gamma_k$-modules. Then
$ \bar k[Y]^\times = \bar k^\times \oplus B_Y$ and $ \bar k[H]^\times = \bar k^\times \oplus B_H $ with $B_Y\subset B_H$ by $H\subset Y$ and
 $$ T_1^*  = \Div_{X^c_{\bar k}\setminus X_{\bar k}}(X^c_{\bar k}) \cong \bar k[Y]^\times/\bar k^\times \cong B_Y $$ as $\Gamma_k$-module by (\ref{uy}).

Since $\psi$ induces the injective homomorphism
$$ \psi^*: \ \bar k[T_1]^\times/\bar k^\times \rightarrow \bar k[H]^\times /\bar k^\times  $$ of $\Gamma_k$-module which is compatible with $B_Y\subset B_H$, one concludes that the homomorphism of $\bar k$-algebras $\psi^*: \bar k[T_1] \rightarrow \bar k[H]$ factors through
$$ \bar k[T_1]= \bar k[B_Y] \rightarrow \bar k [Y] \subset \bar k[H] $$
which is also $\Gamma_k$-equivariant.

Since $\psi$ is a homomorphism from $H$ to $T_1$, one has the following commutative diagram
\[ \begin{CD} H\times_k Y  @>{a_Y}>> Y \\
 @V{\psi\times \psi}VV @VV{\psi}V \\
 T_1\times_k T_1 @>>{m_{T_1}}> T_1
 \end{CD} \]
by Lemma \ref{a_Y}. This implies that $\psi$ is an $H$-morphism. By Proposition \ref{fib-int}, one concludes that $\psi$ is smooth with geometrically integral fibers.
\end{proof}

\begin{prop}\label{y_0} Let $$Y_0=\psi^{-1}(1_{T_1}) \subset Y$$ be the fiber of $1_{T_1}$ in $Y\xrightarrow{\psi} T_1$.  Then $Y_0$ is a smooth $H_0$-groupic variety, the map given by divisors of functions
 $$ \bar{k}[H_0]^\times/\bar k^\times  \xrightarrow{div} \Div_{(Y_0)_{\bar k} \setminus (H_0)_{\bar k} } ((Y_0)_{\bar k})$$ is an isomorphism,
and
$$ \bar k[Y_0]^\times =\bar k^\times, \ \ \ \Pic((Y_0)_{\bar k})=0 \ \ \ \text{and} \ \ \ \Br_a(Y_0)=0 . $$
\end{prop}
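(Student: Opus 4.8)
The plan is to realize $Y_0$ as a pullback of the torsor $Z$ over a smooth compactification that is well-adapted to the factorization $H^{tor}=T_0\times_k T_1$, and then read off the invariants of $Y_0$ from those of $Z$ exactly as in Section 3. First I would observe that $Y_0=\psi^{-1}(1_{T_1})$ is $H_0$-stable since $\psi$ is an $H$-morphism by Lemma \ref{ext}, and that $Y_0$ is smooth of the expected dimension because $\psi$ is smooth with geometrically integral fibers (again Lemma \ref{ext}); geometric integrality of $Y_0$ follows from that of the fiber. Next, $H_0=\ker(\psi)\subset H$ sits inside $Y_0$ as an open subvariety, and since $a_Y|_{H\times_k H}=m_H$ the restricted action $a_{Y_0}\colon H_0\times_k Y_0\to Y_0$ extends $m_{H_0}$; hence $Y_0$ is an $H_0$-groupic variety. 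This gives the first assertion.

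For the computation of the units and Picard group, the key point is the birational/divisorial bookkeeping of the complement $(Y_0)_{\bar k}\setminus (H_0)_{\bar k}$. I would argue that $(Y_0)_{\bar k}$ sits inside a smooth compactification obtained as the fiber over $1_{T_1}$ of a suitable morphism from the total space $Z$ (or from a smooth compactification of $Y$), using the torus decomposition so that $T_1^*=\Div_{X^c_{\bar k}\setminus X_{\bar k}}(X^c_{\bar k})$ is precisely the part of $\bar k[H]^\times/\bar k^\times$ killed by passing to $H_0$ (this is the exact sequence $1\to T_1^*\xrightarrow{\psi^*}\bar k[H]^\times/\bar k^\times\to \bar k[H_0]^\times/\bar k^\times\to 1$ used in Lemma \ref{constant}, together with $\chi^*\colon T_0^*\xrightarrow{\cong}\bar k[H_0]^\times/\bar k^\times$ from Lemma \ref{h_0}). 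Then $\bar k[Y_0]^\times/\bar k^\times$ identifies with $\Div_{X^c_{\bar k}\setminus X_{\bar k}}(X^c_{\bar k})/T_1^*=0$, i.e. $\bar k[Y_0]^\times=\bar k^\times$; the isomorphism $\bar k[H_0]^\times/\bar k^\times\xrightarrow{div}\Div_{(Y_0)_{\bar k}\setminus(H_0)_{\bar k}}((Y_0)_{\bar k})$ then follows from Theorem 1.6.1 in \cite{CTS87} and Lemma B.1 in \cite{CT08}, exactly as for (\ref{uh}) and (\ref{uy}), now that the ambient smooth compactification has trivial geometric units and (the relevant) trivial geometric Picard group. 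For $\Pic((Y_0)_{\bar k})=0$, I would use that $Y_0\to Y$ is the pullback of $T_1\xrightarrow{1}\{1_{T_1}\}$ along a smooth morphism with a rational point, so the exact sequence relating $\Pic$ of $Y$, of the fiber, and $T_1^*$ (of the form $\bar k[Y]^\times/\bar k^\times\to T_1^*\to \Pic((Y_0)_{\bar k})\to \Pic(Y_{\bar k})$, cf. Lemma 6.1 and (6.11.4) in \cite{Sansuc}) forces $\Pic((Y_0)_{\bar k})=0$ since $\Pic(Y_{\bar k})=0$ and $\bar k[Y]^\times/\bar k^\times\to T_1^*$ is the identity on $B_Y\cong T_1^*$.

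Finally, $\Br_a(Y_0)=0$ follows formally once $\bar k[Y_0]^\times=\bar k^\times$ and $\Pic((Y_0)_{\bar k})=0$ are known: by the Hochschild--Serre spectral sequence (or (6.1.3) of Lemma 6.1 in \cite{Sansuc}) $\Br_a(Y_0)$ injects into $H^1(k,\Pic((Y_0)_{\bar k}))=0$, using $H^1(\Gamma_k,\bar k^\times)=0$ and the vanishing of the geometric Picard group; alternatively one quotes Lemma 6.1 of \cite{Sansuc} directly for $Y_0$, which is quasi-trivial. The main obstacle is the middle step: constructing (or invoking) a smooth compactification of $Y_0$ compatible with all the identifications so that the divisor-class bookkeeping goes through, i.e. making precise that taking the fiber $\psi^{-1}(1_{T_1})$ commutes with the passage to smooth compactifications in the way needed to apply the CTS/CT08 machinery. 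Once that compatibility is in place, everything else is a diagram chase with the exact sequences already established in Lemmas \ref{h_0}, \ref{constant} and \ref{ext}.
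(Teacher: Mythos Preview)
Your treatment of the first assertion (that $Y_0$ is a smooth $H_0$-groupic variety) and of the final one ($\Br_a(Y_0)=0$ via Hochschild--Serre once $\bar k[Y_0]^\times=\bar k^\times$ and $\Pic((Y_0)_{\bar k})=0$ are known) is correct and matches the paper.

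The middle step, however, has a genuine gap. Neither of the two mechanisms you propose actually applies. First, the morphism $\psi$ is constructed from the identification $T_1^*\cong B_Y\subset \bar k[Y]^\times$; there is no reason for it to extend to the compactification $Z$, so the ``fiber over $1_{T_1}$ of a suitable morphism from $Z$'' does not exist in general, and you yourself flag this as the main obstacle without resolving it. Second, the exact sequence you invoke, Sansuc (6.11.4), is a statement about short exact sequences of connected linear algebraic groups; it does not give a sequence $\bar k[Y]^\times/\bar k^\times\to T_1^*\to \Pic((Y_0)_{\bar k})\to \Pic(Y_{\bar k})$ for the fiber of an arbitrary smooth morphism of varieties $Y\to T_1$, so the deduction of $\Pic((Y_0)_{\bar k})=0$ is unjustified. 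Once that fails, your derivation of $\bar k[Y_0]^\times=\bar k^\times$ and of the $\mathrm{div}$ isomorphism collapses as well, since those rely on the same bookkeeping.

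The paper avoids compactifications entirely. Its key step is to compare the boundary divisor groups directly: using the cartesian square with $H_0\hookrightarrow Y_0$ and $H\hookrightarrow Y$, one gets a restriction map
\[
\phi:\ \Div_{Y_{\bar k}\setminus H_{\bar k}}(Y_{\bar k})\longrightarrow \Div_{(Y_0)_{\bar k}\setminus (H_0)_{\bar k}}((Y_0)_{\bar k}),
\]
and the substance of the proof is to show $\phi$ is an \emph{isomorphism} by a geometric argument using the $H$-action. Injectivity comes from the fact that every irreducible boundary component $D\subset Y_{\bar k}\setminus H_{\bar k}$ is $H$-stable and meets $Y_0$ (translate any point of $D$ into $Y_0$ via an $h\in H$ with $\psi(h)=\psi(x)$). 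Surjectivity is the harder direction: for an irreducible boundary component $D_0\subset (Y_0)_{\bar k}\setminus (H_0)_{\bar k}$ one takes $D=\overline{H\cdot D_0}$ and applies Proposition~\ref{fib-int} to the $H$-morphism $D\to T_1$ to see that $D\cap Y_0$ is geometrically integral, hence equals $D_0$, and that $D$ has codimension~$1$ in $Y$. With $\phi$ an isomorphism, a diagram chase using $\Pic(Y_{\bar k})=0$, $\Pic((H_0)_{\bar k})=0$, and a rank count (via Lemma~\ref{h_0}) yields all three conclusions at once. This use of Proposition~\ref{fib-int} on the boundary components is the idea missing from your proposal.
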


\begin{proof} By Lemma \ref{ext}, $Y_0$ is an $H_0$-groupic variety. Since $\psi$ is smooth, one concludes that $Y_0$ is smooth.

By the cartesian diagram
 \[ \begin{CD} H_0  @>>> Y_0 \\
 @VVV @VVV \\
 H @>>> Y
 \end{CD} \]
where the horizontal maps are open immersions and the vertical maps are closed immersions, one obtains the commutative diagram of exact sequences
\[ \begin{CD}
1 @>>> \bar{k}[Y]^\times @>>> \bar{k}[H]^\times  @>>> \Div_{Y_{\bar k}\setminus H_{\bar k}}(Y_{\bar k}) @>>> \Pic(Y_{\bar k}) @>>> \Pic(H_{\bar k})  \\
@.  @VVV @VVV @VV{\phi}V @VVV @VVV \\
1 @>>> \bar{k}[Y_0]^\times @>>> \bar{k}[H_0]^\times   @>>> \Div_{(Y_0)_{\bar k} \setminus (H_0)_{\bar k} } ((Y_0)_{\bar k}) @>>> \Pic((Y_0)_{\bar k}) @>>> \Pic((H_0)_{\bar k})
 \end{CD} \]
by Theorem 1.6.1 in \cite{CTS87}, where $\phi$ is the pull-back of Cartier divisors (see Section 2.3 in \cite{Ful}), which are the same as Weil divisors by smoothness.

Let $D$ be an irreducible component of $Y_{\bar k}\setminus H_{\bar k}$. Since $D(\bar k)$ is stable under the action of $H(\bar k)$, one has $$H(\bar k)\cdot (D(\bar k)\cap Y_0(\bar k))\subseteq D(\bar k). $$ For any $x\in D(\bar k)$, there is $h\in H(\bar k)$ such that $\psi(h)=\psi(x)$. Therefore  $$h^{-1}x\in Y_0(\bar k)\cap D(\bar k) \ \ \ \text{and} \ \ \ H(\bar k)\cdot (D(\bar k)\cap Y_0(\bar k))= D(\bar k). $$ This implies that $\phi$ is injective.

On the other hand, for an irreducible component $D_0$ of $(Y_0)_{\bar k} \setminus (H_0)_{\bar k}$, one has $$D_0(\bar k )\cap H(\bar k)=\emptyset $$ and the Zariski closure $\overline{H\cdot D_0}$ of $H \cdot D_0$ is an irreducible closed subset in $Y_{\bar k}\setminus H_{\bar k}$.  Let $D=\overline{H\cdot D_0}$. Then $ D_0 \subseteq  D \cap Y_0$
and $D$ is $H$-invariant. Applying Proposition \ref{fib-int} for morphism $D \xrightarrow{\psi_D} T_1$ of $H$-varieties, one obtains that $\psi_D^{-1}(1_{T_1})=D\cap Y_0$ is geometrically integral. By the maximality of $D_0$, one has $D_0 = D \cap Y_0$. By the action of $H$, one has
$$ \codim(D,Y)=\codim(D_{\eta_{T_1}},Y_{\eta_{T_1}})=\codim(D\cap Y_0, Y_0)=\codim(D_0,Y_0)=1  $$ and $D$ is a divisor of $Y$.  This implies $\phi$ is an isomorphism.

Since $\Pic(Y_{\bar k})=0$, one obtains that the map $$ \bar{k}[H_0]^\times/\bar k^\times  \rightarrow \Div_{(Y_0)_{\bar k} \setminus (H_0)_{\bar k} } ((Y_0)_{\bar k})$$ induced in the above commutative diagram is surjective. This implies that $$\Pic((Y_0)_{\bar k})=0$$ by Lemma \ref{h_0}. Since both $\bar{k}[H_0]^\times/\bar k^\times$ and $\bar{k}[Y_0]^\times/\bar k^\times$ are free abelian groups of finite rank and
$$ \rank(\bar{k}[H_0]^\times/\bar k^\times)=\rank(T_0^*)=\rank( \Div_{Y_{\bar k}\setminus H_{\bar k}}(Y_{\bar k})) = \rank(\Div_{(Y_0)_{\bar k} \setminus (H_0)_{\bar k} } ((Y_0)_{\bar k}))$$ by Lemma \ref{h_0}, (\ref{uh}) and (\ref{uy}), one concludes that the above induced map is an isomorphism. Therefore $ \bar k[Y_0]^\times =\bar k^\times$. Applying the Hochschild-Serre spectral sequence (see Lemma 2.1 in \cite{CTX}), one has $\Br_a(Y_0)=0$. \end{proof}

\begin{definition} \label{st-tv} Let $K$ be a finite \'etale algebra over $k$. The unique minimal toric subvariety $V$ of $(Res_{K/k}(\Bbb G_m)\hookrightarrow Res_{K/k}({\Bbb A}^1))$ over $k$ with respect to $Res_{K/k}(\Bbb G_m)$ such that $$\codim( Res_{K/k}({\Bbb A}^1)\setminus V, Res_{K/k}({\Bbb A}^1))\geq 2$$ is called the standard toric variety of $K/k$.
\end{definition}

Such a standard toric variety always exists by Proposition 2.10 in \cite{CX}.  By Lemma \ref{h_0}, one has $H_0^{tor} \cong T_0$ over $k$.

\begin{prop}\label{chi} Let $T_0$, $H_0$ and $Y_0$ be as above. There exists a standard toric variety $(T_0\hookrightarrow V)$ and an open $H_0$-subvariety $U\subset Y_0$ such that $H_0\subset U$,
$\codim(Y_0\setminus U,Y_0)\geq 2$ and the canonical quotient map $H_0\xrightarrow{\varsigma_{H_0}}T_0$ can be extended a morphism $U\xrightarrow{\varsigma_U} V$ of groupic varieties. Moreover, the morphism $\varsigma_{U}$  is smooth with nonempty and geometrically integral fibres.
\end{prop}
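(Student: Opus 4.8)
The plan is to build $V$ and $U$ in parallel with the construction that already produced $H_0$ and $Y_0$, using $\varsigma_{H_0}$ and the description of $T_0^*$ as the torsion-free $\Gamma_k$-module $\Div_{X_{\bar k}\setminus G_{\bar k}}(X_{\bar k})\cong\Div_{(Y_0)_{\bar k}\setminus(H_0)_{\bar k}}((Y_0)_{\bar k})$ furnished by Proposition \ref{y_0}. First I would define $V$ to be the standard toric variety of the \'etale algebra $K/k$ for which $T_0=\Res_{K/k}(\G_m)$; this exists and is unique by Proposition 2.10 in \cite{CX} (Definition \ref{st-tv}), and by construction $T_0\hookrightarrow V\subset\Res_{K/k}(\A^1)$ with $\codim(\Res_{K/k}(\A^1)\setminus V,\Res_{K/k}(\A^1))\ge 2$, and the boundary $V_{\bar k}\setminus (T_0)_{\bar k}$ is a disjoint union of smooth integral $T_{0,\bar k}$-divisors whose classes freely generate $T_0^*$. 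The point of choosing the \emph{standard} toric variety is that its boundary divisors give exactly a permutation basis of $T_0^*=\bar k[H_0]^\times/\bar k^\times$, matching the boundary of $Y_0$ under the isomorphism $\mathrm{div}\colon\bar k[H_0]^\times/\bar k^\times\xrightarrow{\cong}\Div_{(Y_0)_{\bar k}\setminus(H_0)_{\bar k}}((Y_0)_{\bar k})$ of Proposition \ref{y_0}.

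Next I would construct $U$. Apply Proposition \ref{Uu} to the smooth geometrically integral $H_0$-variety $Y_0$ with its open $H_0$-subvariety $H_0$ itself: this produces an open $H_0$-subvariety $U\subset Y_0$ with $H_0\subset U$, $\codim(Y_0\setminus U,Y_0)\ge 2$, and $(U\setminus H_0)_{\bar k}\cong\coprod_i D_i$ with each $D_i$ a smooth integral $H_{0,\bar k}$-divisor. Since removing a codimension $\ge 2$ closed set changes neither units nor Picard group, Proposition \ref{y_0} still gives $\bar k[U]^\times=\bar k^\times$, $\Pic(U_{\bar k})=0$, and $\mathrm{div}\colon\bar k[H_0]^\times/\bar k^\times\xrightarrow{\cong}\Div_{U_{\bar k}\setminus(H_0)_{\bar k}}(U_{\bar k})$. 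Now extend $\varsigma_{H_0}\colon H_0\to T_0$ to $U$: the inclusion of character groups $T_0^*=\bar k[T_0]^\times/\bar k^\times\hookrightarrow\bar k[H_0]^\times/\bar k^\times$ is the isomorphism just described, and the toric coordinate functions on $V$ are, up to units, precisely the boundary equations of $V$; their pullbacks along $\varsigma_{H_0}$ extend to regular functions on $U$ because the corresponding divisors on $U_{\bar k}$ (the $D_i$, matched up via $\mathrm{div}$) are effective. Concretely, each defining section of a boundary divisor of $V$ pulls back to a rational function on $U$ whose divisor is effective on $U_{\bar k}$ after removing a codimension $\ge 2$ locus, so it lies in $\bar k[U]$; by Galois descent these assemble into a $k$-morphism $\varsigma_U\colon U\to V$ extending $\varsigma_{H_0}$. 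Since $\varsigma_U$ extends a group homomorphism on the open dense subgroup $H_0$ and both $U$ and $V$ are $H_0$-groupic (with $H_0$ acting on $V$ through $\varsigma_{H_0}$), $\varsigma_U$ is a morphism of groupic varieties, hence $H_0$-equivariant.

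Finally, for the smoothness and fibre statement: $V$ is a smooth toric variety and $T_0=H_0^{tor}$ is a quotient of $H_0$, so $V=T_0/(\text{trivial})$ is of the form required by Proposition \ref{fib-int}; apply Proposition \ref{fib-int} to the $H_0$-morphism $\varsigma_U\colon U\to V$ with $V$ affine? — here is the one subtlety, since $V$ need not be affine. To handle this I would cover $V$ by the standard affine toric opens $V_\sigma$, each of which is of the form $T_0\hookrightarrow V_\sigma$ with $V_\sigma$ an affine toric variety, so each $V_\sigma$ is an affine smooth $T_0$-variety with a single open dense orbit; restricting, $\varsigma_U^{-1}(V_\sigma)\to V_\sigma$ is an $H_0$-morphism to an affine variety of the shape $B=T_0/T_0'$ is not literally met, so instead I invoke Proposition \ref{fib-int} in the form that applies when the target is a smooth affine $G$-variety with a dense orbit under a connected group, using that $V_\sigma$ is such a space and that $\varsigma_U^{-1}(V_\sigma)$ is smooth. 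The generic fibre over $V$ sits over the open orbit $T_0\subset V$, where $\varsigma_U$ restricts to $\varsigma_{H_0}\colon H_0\to T_0$, whose fibres are cosets of $\ker(\varsigma_{H_0})$, which is connected geometrically integral by the column-exact diagram computing $\ker(\varsigma_{H_0})$ together with Lemma \ref{h_0}. Hence the generic fibre of $\varsigma_U$ is geometrically integral, all fibres are translates of it by the $H_0$-action so all are nonempty and geometrically integral, and generic smoothness (Corollary 10.7, Chapter III of \cite{Hartshorne}) upgrades this to smoothness of $\varsigma_U$.

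The main obstacle I anticipate is the last step: making Proposition \ref{fib-int} applicable when the target $V$ is not affine and not literally a homogeneous space $G/G_1$. The clean fix is to do everything fibrewise over the affine toric charts of $V$ and over the open orbit, reducing to the homogeneous-space case $T_0$ itself for the generic fibre and using generic smoothness plus $H_0$-translation to propagate; I would spell out that $\ker(\varsigma_{H_0})$ is connected and geometrically integral (already essentially in Lemma \ref{h_0} and the diagram following Lemma \ref{a_Y}) so that Proposition \ref{fib-int} genuinely applies on the open orbit. The verification that the pulled-back toric coordinates are regular on $U$ — i.e. that the extension $\varsigma_U$ exists at all — is the other place needing care, but it is forced by the isomorphism $\mathrm{div}\colon\bar k[H_0]^\times/\bar k^\times\xrightarrow{\cong}\Div_{U_{\bar k}\setminus(H_0)_{\bar k}}(U_{\bar k})$ of Proposition \ref{y_0} matched against the analogous isomorphism for the standard toric variety $V$.
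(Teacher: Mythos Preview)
Your construction of $U$ (via Proposition~\ref{Uu}) and of $V$ (as the standard toric variety for the permutation module $T_0^*$), together with the extension of $\varsigma_{H_0}$ to $\varsigma_U$ by matching boundary divisors through the isomorphism $\mathrm{div}\colon \bar k[H_0]^\times/\bar k^\times \xrightarrow{\cong} \Div_{U_{\bar k}\setminus (H_0)_{\bar k}}(U_{\bar k})$, is essentially the same as what the paper does. The paper is slightly more explicit: it writes $V=\bigcup_i S_i$ with $S_i=\Spec(k[x_1^{\pm 1},\dots,x_i,\dots,x_s^{\pm 1}])$, sets $U_i=U_{\bar k}\setminus\bigcup_{j\neq i}D_j$, and notes that $\mathrm{div}_U(x_i)=D_i$ forces $x_i$ to be regular on $U_i$, so $\varsigma_{H_0}$ extends to $U_i\to S_i$ and one glues. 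Your Galois-descent formulation of the same step is fine.

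The genuine gap is in your smoothness argument. You propose to establish smoothness and geometric integrality of fibres by observing that the generic fibre (over $T_0$) is a coset of $\ker(\varsigma_{H_0})$, hence geometrically integral, and then asserting that ``all fibres are translates of it by the $H_0$-action''. This is false: the $H_0$-action on $V$ factors through $T_0$, and $T_0$ does \emph{not} act transitively on $V$---the boundary divisors $\mathrm{div}_V(x_i)$ are separate orbits. So translation by $H_0$ never carries the generic fibre to a fibre over a boundary point, and generic smoothness alone says nothing about $\varsigma_U$ over $V\setminus T_0$. Your fallback of covering $V$ by the affine toric charts $V_\sigma$ does not help either, since (as you yourself note) these $V_\sigma\cong \Bbb G_m^{s-1}\times\Bbb A^1$ are not homogeneous spaces $G/G_1$, so Proposition~\ref{fib-int} does not apply to them.

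The paper's fix is to apply Proposition~\ref{fib-int} not to $V$ or to its affine charts, but to each \emph{boundary stratum} separately: the restriction $D_i\to \mathrm{div}_V(x_i)$ is an $H_0$-morphism, and $\mathrm{div}_V(x_i)\cong T_0/\Bbb G_m\cong H_0/\varsigma_{H_0}^{-1}(\Bbb G_m)$ \emph{is} an affine smooth homogeneous space with connected stabilizer, so Proposition~\ref{fib-int} gives that $D_i\to\mathrm{div}_V(x_i)$ is smooth with nonempty geometrically integral fibres. Combined with the (trivial) smoothness of $H_0\to T_0$, one sees that every fibre of $\varsigma_U$ has dimension $\dim(\ker\varsigma_{H_0})$; since $\codim(D_i,U)=\codim(\mathrm{div}_V(x_i),V)=1$, miracle flatness (both $U$ and $V$ being smooth) gives that $\varsigma_U$ is flat, and then smoothness follows because all fibres are smooth.
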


\begin{proof}
By Proposition \ref{Uu},  there exists an open $H_0$-subvariety $U\subset Y_0$, such that $H_0\subset U$, $\codim(Y_0\setminus U, Y_0)\geq 2$, $(U\setminus H_0)_{\bar k}\cong \coprod_iD_i$ and each $D_i$ is a smooth integral $(H_0)_{\bar{k}}$-variety with $\dim(D_i)=\dim(Y_0)-1$.
One notes
$$U_i=(U)_{\bar k}\setminus (\bigcup_{j\neq i} D_j).$$

Since the extension of the morphism if it exists is unique, one can assume that $k=\bar k$. Since
$$ T_0^*\cong \bar k[H_0]/\bar k^\times  \cong \Div_{Y_0 \setminus H_0} (Y_0)\cong \Div_{U \setminus H_0} (U)$$
by Lemma \ref{h_0}, Proposition \ref{y_0} and $\codim(Y_0\setminus U, Y_0)\geq 2$, one has
$$ x_i \in \bar k[H_0]/\bar k^\times  = T_0^* \ \ \ \text{ such that } \ \ \ div_{U}(x_i)=D_i $$ for $1\leq i\leq s$. Then $\{x_1, \cdots, x_s \}$ is a $\Bbb Z$-basis of $T_0^*$ and $$V=\bigcup_{i=1}^s S_i \subset Spec(k[x_1, \cdots, x_s]) $$ with $$S_i=Spec(k[x_1, x_1^{-1}, \cdots, x_{i-1}, x_{i-1}^{-1}, x_i, x_{i+1}, x_{i+1}^{-1}, \cdots, x_s, x_s^{-1}] $$ for $1\leq i\leq s$ by the structure of standard toric varieties (see Lemma 2.11 in \cite{CX}).

Since $div_{U}(x_i)=D_i$, one obtains that $\varsigma_{H_0}$ can be extended to $U_i\xrightarrow{{\varsigma_{H_0}}_i} S_i$ for $1\leq i\leq s$.  By gluing ${\varsigma_{H_0}}_i$ for $1\leq i\leq s$ together, one can extend $\varsigma_{H_0}$ to a morphism $U\rightarrow V$.

Applying Proposition \ref{fib-int} to the morphism of $H_0$-varieties
$$D_i \xrightarrow{{\varsigma_{H_0}}_i} div_V(x_i) \ \ \ \text{with} \ \ \ div_V(x_i) \cong T_0/\Bbb G_m \cong H_0/\varsigma_{H_0}^{-1}(\Bbb G_m) ,
 $$
 one concludes that each $\varsigma_{{H_0}_{i}}^{-1}$ is smooth with nonempty and geometrically integral fibres for $1\leq i\leq s$.
 Since $\codim(D_i,U)=\codim(div_V(x_i),V)=1$, one can conclude further that $\varsigma_{H_0}$ is flat. Thus  $\varsigma_{H_0}$ is smooth with nonempty and geometrically integral fibres.
\end{proof}

 \section{Proof of main Theorem \ref{main}}\label{proof}

We keep the same notation as that in the previous sections and assume $k$ is a number field in this section. We give a proof of Theorem \ref{main} by applying the results in previous section.  In particular,  $X$ is a smooth $G$-groupic variety over $k$ and $ Y\xrightarrow{\rho_X} X $ is a pull-back of a universal torsor of smooth compactification $X^c$ of $X$ under the torus $T$ over $k$ with $T^*=Pic(X_{\bar k}^c)$. Then $Y$ is an $H$-groupic variety where $H=\rho_{X}^{-1}(G)$ is a quasi-trivial linear algebraic group over $k$ by Lemma \ref{a_Y}. Moreover, one has
$$ H^{tor}  \cong T_0\times_k T_1 \ \ \ \text{with} \ \ \ T_0^*=\Div_{X_{\bar k} \setminus G_{\bar k}}(X_{\bar k}) \ \ \text{and} \ \ T_1^*=\Div_{X^c_{\bar k}\setminus X_{\bar k}}(X^c_{\bar k}). $$

Let $H\xrightarrow{\psi} T_1$ be the surjective homomorphism obtained by composing $\varsigma_H$ with the projection on $T_1$ and $H_0=ker(\psi)$. Then $\psi$ can be extended to a smooth morphism $Y\xrightarrow{\psi} T_1$ by Lemma \ref{ext}. Let $Y_0=\psi^{-1}(1_{T_1})$. It is a closed subscheme of $Y$.

\begin{lem} \label{pt} $X(k_v)=G(k_v) \cdot \rho_X(Y_0(k_v))$ for any $v\in \Omega_k$.
\end{lem}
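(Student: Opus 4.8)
The claim is that $X(k_v) = G(k_v) \cdot \rho_X(Y_0(k_v))$ for each place $v$. The inclusion $\supseteq$ is immediate: $\rho_X(Y_0(k_v)) \subseteq X(k_v)$ and $X(k_v)$ is stable under the $G(k_v)$-action. So the work is in $\subseteq$. The plan is to fix $x \in X(k_v)$ and produce a point of $Y_0(k_v)$ whose image under $\rho_X$ lies in the $G(k_v)$-orbit of $x$. The natural idea is to use the $H$-action $a_Y$ from Lemma \ref{a_Y} together with the smooth fibration $\psi: Y \to T_1$ from Lemma \ref{ext} to move an arbitrary $k_v$-point of the torsor into the fiber $Y_0 = \psi^{-1}(1_{T_1})$.

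More precisely, first I would reduce to a statement purely about $Y$: it suffices to show that for any $y \in Y(k_v)$ there exists $h \in H(k_v)$ with $a_Y(h,y) \in Y_0(k_v)$, because then, given $x \in X(k_v)$, one lifts $x$ to some $y \in Y(k_v)$ — here one must check the torsor $Y \to X$ under $T$ has a $k_v$-point over $x$; this holds because $H^1(k_v, T)$ need not vanish, so instead I would argue that it is enough to find \emph{some} $y \in Y(k_v)$, namely use that $H \subset Y$ is open and $\rho_X(H(k_v)) = \rho_G(H(k_v))$ meets the orbit structure — actually the clean route is: the fibration $\psi: Y \to T_1$ is smooth surjective with a section-like point $1_H \mapsto 1_{T_1}$, and $T_1$ is quasi-trivial hence satisfies strong approximation / has the property $H^1(k_v,\,) $ controls fibers, but the cleanest is to use the $H$-action directly. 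Since $\psi$ is an $H$-morphism to the torus $T_1$ and $\psi|_H = \psi$ is surjective as a homomorphism of algebraic groups, the induced map $H(k_v) \to T_1(k_v)$ has image a finite-index (indeed, for a surjection of connected groups with quasi-trivial, hence $H^1$-trivial in a suitable sense, target) — here I would invoke that $T_1$ is quasi-trivial so $H^1(k_v, H_0) \to H^1(k_v, H)$ behaves well and $H(k_v) \to T_1(k_v)$ is surjective. Then for $y \in Y(k_v)$, set $t = \psi(y) \in T_1(k_v)$, pick $h \in H(k_v)$ with $\psi(h) = t$, and form $a_Y(h^{-1}, y)$; by the commutative diagram in Lemma \ref{ext} relating $a_Y$, $\psi \times \psi$, and $m_{T_1}$, this point lies in $\psi^{-1}(1_{T_1}) = Y_0(k_v)$.

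To finish, given $x \in X(k_v)$, I first need a $k_v$-point $y$ of $Y$ lying over \emph{something}: the point is that it need not lie over $x$. So the correct bookkeeping is: choose any $y \in Y(k_v)$ (nonempty since $H(k_v) \neq \emptyset$, e.g. $1_H$), move it by the above into $y' \in Y_0(k_v)$, then $\rho_X(y') \in X(k_v)$, and now relate $\rho_X(y')$ to $x$ via the $G$-action. This last step is the heart: one must show $x$ and $\rho_X(y')$ lie in the same $G(k_v)$-orbit. This is false in general for arbitrary $x$, so the statement really should be read as: every $x$ is obtained as $g \cdot \rho_X(y_0)$ for \emph{some} $y_0 \in Y_0(k_v)$ and $g \in G(k_v)$, and the way to get this is to first translate $x$ into $G$: since $G$ is open dense in $X$ and $X(k_v)$ is a $p$-adic (or real) analytic manifold, $G(k_v)$ is open and dense in $X(k_v)$... no — rather, lift $x$ to $\tilde x \in Y(k_v)$ using that $\rho_X: Y \to X$ is a torsor under $T$ with $Y$ having a rational point in each fiber over $k_v$-points when $H^1(k_v,T) $ vanishes; but $T = \mathrm{Res}$-type need not be quasi-trivial. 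The honest resolution, which I expect the authors use: the fiber $\rho_X^{-1}(x)$ is a torsor under $T$ over $k_v$, which has a $k_v$-point iff its class in $H^1(k_v, T)$ vanishes, and this can be arranged after translating by $G(k_v)$ because the class in $H^1(k_v,T)$ of $\rho_X^{-1}(g\cdot x)$ differs from that of $\rho_X^{-1}(x)$ by the image of $g$ under a connecting map $G(k_v) \to H^1(k_v,T)$ coming from $1 \to T \to H \to G \to 1$, and $H^1(k_v,H) = *$ (since $H$ is quasi-trivial, $H^1(k_v, H)$ is trivial) forces this connecting map $G(k_v) \to H^1(k_v,T)$ to be surjective; hence one can choose $g \in G(k_v)$ with $\rho_X^{-1}(g^{-1}x)$ having a $k_v$-point $y$. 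Then apply the fiber-moving argument to get $y' \in Y_0(k_v)$ with $a_Y(h^{-1},y) = y'$; projecting, $\rho_X(y') = \rho_G(h^{-1}) \cdot g^{-1} x$ by the compatibility diagram in Lemma \ref{a_Y}, so $x = g\,\rho_G(h) \cdot \rho_X(y') \in G(k_v)\cdot\rho_X(Y_0(k_v))$.

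I expect the main obstacle to be exactly this surjectivity of the connecting map $G(k_v) \to H^1(k_v, T)$, i.e. the use of $H^1(k_v, H) = 1$ for the quasi-trivial group $H$ (Hilbert 90 for the quasi-trivial factors plus vanishing of $H^1$ for simply connected over local fields, via the structure $H$ has flasque/quasi-trivial resolution), together with the care needed in verifying that the fiber-moving step lands in $Y_0$ and commutes correctly with $\rho_X$ and the $G$-action — all the diagrams are in place (Lemmas \ref{a_Y} and \ref{ext}), but assembling them with the $H^1$-twisting argument cleanly is the delicate point. The topology of $k_v$ plays no role here; this is a purely orbit-theoretic statement place by place.
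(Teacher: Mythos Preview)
Your two-step plan --- first translate $x \in X(k_v)$ by some $g \in G(k_v)$ so that $g^{-1}x$ lifts to $Y(k_v)$, then move any $y \in Y(k_v)$ into $Y_0(k_v)$ via the $H(k_v)$-action using $\psi$ --- matches the paper's structure exactly. The gap is in how you justify each step.

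For step~1 you claim $H^1(k_v, H) = 1$ because $H$ is quasi-trivial, so that the connecting map $G(k_v) \to H^1(k_v, T)$ is surjective; for step~2 you similarly want $H(k_v) \to T_1(k_v)$ surjective via $H^1(k_v, H_0) = 1$. But quasi-triviality of a connected linear group only forces $H^1(k_v, \cdot) = 1$ at \emph{non-real} places (this is precisely Theorem~\ref{sha}(1) in the paper); at a real place $H^1(k_v, H)$ and $H^1(k_v, H_0)$ can be nontrivial, since the semisimple part is $G^{sc}$ and for instance $H^1(\mathbb{R}, \mathrm{Spin}(p,q))$ is nontrivial for suitable signatures. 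So neither surjectivity holds in general, and your argument fails at real places.

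The paper's fix is topological, so your closing remark that ``the topology of $k_v$ plays no role'' is exactly backward. For step~1: the boundary map $\partial_X : X(k_v) \to H^1(k_v, T)$ is locally constant because $H^1(k_v, T)$ is finite, and $G(k_v)$ is dense in $X(k_v)$ since $X$ is smooth and $G \subset X$ is open dense; hence $\partial_X(X(k_v)) = \partial_X(G(k_v)) = \partial_G(G(k_v))$, which is all you need --- no surjectivity onto the whole of $H^1(k_v, T)$. For step~2: $\psi(H(k_v))$ is an open subgroup of $T_1(k_v)$ (image of a homomorphism of $v$-adic Lie groups with smooth underlying morphism), hence closed, and density of $H(k_v)$ in $Y(k_v)$ gives $\psi(Y(k_v)) \subseteq \overline{\psi(H(k_v))} = \psi(H(k_v))$; again no surjectivity onto $T_1(k_v)$ is required. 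With these two density arguments in place of your $H^1$-vanishing claims, the remainder of your diagram-chase (picking $h$ with $\psi(h) = \psi(y)$, forming $h^{-1}y \in Y_0(k_v)$, and reading off $x = g\,\rho_G(h)\cdot \rho_X(h^{-1}y)$) goes through as you describe.
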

\begin{proof} Since $Y\xrightarrow{\rho_X} X$ is a torsor under $T$, one has the following commutative diagram
$$ \begin{CD}
H(k_v) @>>> Y(k_v) \\
@V{\rho_G}VV  @VV{\rho_X}V \\
G(k_v) @>>> X(k_v) \\
@V{\partial_G}VV  @VV{\partial_X}V \\
H^1(k_v, T) @>{=}>> H^1(k_v,T)
\end{CD} $$
with the exact columns. Since $H^1(k_v, T)$ is finite by Theorem 6.14 in Chapter 6 in \cite{PR}, one obtains that both $\partial_X$ and $\partial_G$ are locally constant. Since $G(k_v)$ is dense in $X(k_v)$, one concludes that $ \partial_X(G(k_v))=\partial_X(X(k_v))$. This implies that for any $x_v\in X(k_v)$,  there exists $g_v\in G(k_v)$ such that $\partial_G(g_v)=\partial_X(x_v)$.
 For any $x_v\in X(k_v)$, $g_v\in G(k_v)$, one has $\partial(g_vx_v)=\partial(g_v)\partial(x_v)$, since this holds if $x_v\in G(k_v)$ and $G(k_v)$ is dense in $X(k_v)$.
 Therefore there is $y_v\in Y(k_v)$ such that $\rho_X(y_v)=g_v^{-1}x_v$. Thus $X(k_v)=G(k_v) \cdot \rho_X(Y(k_v))$.

Since $H(k_v)$ is dense in $Y(k_v)$, one has that $\psi(H(k_v))$ is dense in $\psi(Y(k_v))$. At the same time, $\psi(H(k_v))$ is an open subgroup of $T_1(k_v)$ by Proposition 3.3 in Chapter 3 in \cite{PR}. One concludes that $\psi(H(k_v))$ is closed and $\psi(H(k_v))=\psi(Y(k_v))$. For any $y\in Y(k_v)$, there is $h\in H(k_v)$ such that $\psi(y)=\psi(h)$. This implies that $h^{-1}y\in Y_0(k_v)$. Thus $Y(k_v)=H(k_v)\cdot Y_0(k_v)$. The result follows. \end{proof}

Let us now extend the statement of Proposition 4.2 in \cite{CX} on local approximation property for toric varieties to $G$-groupic varieties.

\begin{prop} \label{local} For any $x\in X(k_v)\setminus G(k_v)$, there is $y\in G(k_v)$ such that $y$ is as close to $x$ as required and $$\inv_v(\xi(x))=\inv_v(\xi(y))$$ for all $\xi\in \Br_1(X)$. \end{prop}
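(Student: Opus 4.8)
The plan is to lift the problem to the quasi-trivial torsor $Y$ and then descend the toric case handled in \cite{CX}. Fix $x\in X(k_v)\setminus G(k_v)$. By Lemma \ref{pt}, write $x=g_v\cdot\rho_X(y_v)$ with $g_v\in G(k_v)$ and $y_v\in Y_0(k_v)$; replacing $x$ by $g_v^{-1}x$ (which translates the invariants $\inv_v(\xi(\cdot))$ for $\xi\in\Br_1(X)$ by a fixed constant, and preserves closeness up to the continuous $G(k_v)$-action) we may assume $x=\rho_X(y_v)$ with $y_v\in Y_0(k_v)$. Thus it suffices to produce $y\in H_0(k_v)$, arbitrarily close to $y_v$ in $Y_0(k_v)$, with $\inv_v(\xi(\rho_X(y)))=\inv_v(\xi(\rho_X(y_v)))$ for all $\xi\in\Br_1(X)$; since $\rho_X$ is continuous and open on the relevant loci, and $\rho_X(H_0(k_v))\subseteq G(k_v)$, the point $\rho_X(y)$ will then be the desired $y\in G(k_v)$ after undoing the translation.

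Next I would reduce the Brauer condition on $Y_0$ to a toric one. By Proposition \ref{chi} there is a standard toric variety $(T_0\hookrightarrow V)$, an open $H_0$-subvariety $U\subset Y_0$ containing $H_0$ with $\codim(Y_0\setminus U,Y_0)\geq 2$, and a smooth $H_0$-morphism $\varsigma_U\colon U\to V$ with geometrically integral fibres extending $\varsigma_{H_0}\colon H_0\to T_0$. Since $Y_0$ is smooth and $Y_0\setminus U$ has codimension $\geq 2$, we have $Y_0(k_v)=U(k_v)$ up to the needed approximation (any $k_v$-point can be approximated by points of $U$, by smoothness and the implicit function theorem), so we may work inside $U$ and assume $y_v\in U(k_v)$. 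Because $\varsigma_U$ has geometrically integral fibres and is smooth, pullback along $\varsigma_U$ identifies $\Br_1(V)$ with a subgroup controlling $\Br_1(U)$ modulo classes coming from $\varsigma_U^{-1}$ of the generic fibre; more precisely, using $\bar k[Y_0]^\times=\bar k^\times$, $\Pic((Y_0)_{\bar k})=0$ and $\Br_a(Y_0)=0$ from Proposition \ref{y_0} together with $\Br_a(H_0)=0$ (Lemma \ref{h_0} gives $H_0$ quasi-trivial), one sees that every $\xi\in\Br_1(X)$, pulled back to $Y$ and restricted to $Y_0$, lies in $\Br(k_v)+\varsigma_U^*\Br_1(V)$ after restricting to $U$. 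Hence the invariants $\inv_v(\xi(\rho_X(\cdot)))$ on $U(k_v)$ depend only on the image in $V(k_v)$ via $\varsigma_U$, up to a constant.

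With this in hand, the endgame is: apply Proposition 4.2 of \cite{CX} (the local approximation statement for the standard toric variety $(T_0\hookrightarrow V)$) to the point $\varsigma_U(y_v)\in V(k_v)$, obtaining $t_v\in T_0(k_v)$ close to $\varsigma_U(y_v)$ with matching local invariants for all of $\Br_1(V)$. Then, since $\varsigma_U$ is smooth and surjective on $k_v$-points on a neighbourhood of the fibre (its fibres being geometrically integral $H_0$-homogeneous-type varieties, so they have $k_v$-points near $y_v$ by Hensel/implicit function theorem once $\varsigma_U(y_v)$ is perturbed to $t_v$), lift $t_v$ to a point $y\in \varsigma_U^{-1}(t_v)(k_v)$ close to $y_v$; translating by the $H_0(k_v)$-action we can further move $y$ into $H_0(k_v)$ itself, staying close. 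By the previous paragraph the $\Br_1(X)$-invariants at $\rho_X(y)$ agree with those at $\rho_X(y_v)=x$ (the toric matching handles the $\varsigma_U^*\Br_1(V)$ part, constants match trivially), which is what we want.

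The main obstacle I expect is the bookkeeping in the second paragraph: showing cleanly that the pullback to $U(k_v)$ of an arbitrary $\xi\in\Br_1(X)$ has local invariant factoring through $\varsigma_U$ up to a constant. This requires combining the exact sequence of Proposition \ref{br}, the vanishing $\Br_a(Y_0)=0$ and $\Br_a(H_0)=0$, the codimension-$2$ excision $\Br_1(Y_0)\cong\Br_1(U)$, and the fact that $\varsigma_U$ has geometrically integral fibres (so $\varsigma_U^*$ is injective on $\Br_1$ and the complementary part is "vertical"); keeping track of which classes genuinely come from $V$ versus which are constant over $k_v$ is the delicate point. The lifting of $k_v$-points along the smooth morphism $\varsigma_U$, and the reduction to $U$ across a codimension-$\geq 2$ locus, are routine once smoothness and geometric integrality of fibres (Propositions \ref{fib-int}, \ref{chi}) are invoked.
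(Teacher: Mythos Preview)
Your proposal vastly overcomplicates the argument and in doing so misses the one-line reason the proposition holds. You correctly invoke Lemma~\ref{pt} to write $x=g\cdot\rho_X(y_0)$ with $g\in G(k_v)$ and $y_0\in Y_0(k_v)$, and you correctly cite $\Br_a(Y_0)=0$ from Proposition~\ref{y_0}. But that vanishing is already the whole proof: it says every element of $\Br_1(Y_0)$ comes from $\Br(k)$, hence is \emph{constant} on $Y_0(k_v)$. So for any $\xi\in\Br_1(X)$, the class $\rho_X^*(g^*\xi)|_{Y_0}\in\Br_1(Y_0)$ takes the same local invariant at $y_0$ and at any other point of $Y_0(k_v)$. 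Since $H_0(k_v)$ is dense in $Y_0(k_v)$, choose $h_0\in H_0(k_v)$ in $\rho_X^{-1}(g^{-1}M)$ for a given neighbourhood $M$ of $x$, and set $y=g\cdot\rho_X(h_0)\in G(k_v)\cap M$; the invariants match automatically. That is exactly the paper's proof.

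Your detour through Proposition~\ref{chi}, the standard toric variety $(T_0\hookrightarrow V)$, and Proposition~4.2 of \cite{CX} is entirely unnecessary: you are working hard to show the pullback of $\xi$ to $U$ factors through $\varsigma_U^*\Br_1(V)$ up to constants, when in fact it is already constant on all of $Y_0$. Along the way you assert $\Br_a(H_0)=0$ ``since $H_0$ is quasi-trivial''; this is false. Quasi-triviality of $H_0$ gives $\Pic((H_0)_{\bar k})=0$ and that $\bar k[H_0]^\times/\bar k^\times\cong T_0^*$ is a permutation module, whence $H^1(k,T_0^*)=0$; but $\Br_a(H_0)\cong H^2(k,T_0^*)$, and $H^2$ of a permutation module need not vanish (indeed Proposition~\ref{br} uses $\Br_a(H_0)$ as a nontrivial target). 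Fortunately this error is irrelevant once you notice that $\Br_a(Y_0)=0$ does all the work by itself.
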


\begin{proof} By Lemma \ref{pt}, there is $g\in G(k_v)$ and $y_0\in Y_0(k_v)$ such that $x=g\cdot \rho_X(y_0)$. Let $M$ be an open neighbourhood of $x$ in $X(k_v)$. Then $y_0 \in \rho_X^{-1}(g^{-1}M)\cap Y_0(k_v)$ is a non-empty open subset of $Y_0(k_v)$. Since $H_0(k_v)$ is dense in $Y_0(k_v)$, there is $h_0 \in H_0(k_v) \cap \rho_X^{-1}(g^{-1}M) $.

Let $$y=g\cdot \rho_X(h_0)\in G(k_v)\cap M . $$

For any $\xi \in Br_1(X)$, one has
$$ \inv_v(\xi(y)) = \inv_v(\xi(g\cdot \rho_X(h_0)))=\inv_v (g^*(\xi)(\rho_X(h_0)))= \inv_v (\rho_X^*(g^*(\xi))(h_0))$$
and
$$ \inv_v(\xi(x)) = \inv_v(\xi(g\cdot \rho_X(y_0)))=\inv_v (g^*(\xi)(\rho_X(y_0)))= \inv_v (\rho_X^*(g^*(\xi))(y_0)) . $$  Since $\Br_1(Y_0)$ is constant by Proposition \ref{y_0}, one obtains the desired result. \end{proof}

Let $S$ be a finite subset of $\Omega_k$ containing all archimedean places such that the following conditions hold:

i) The open immersion $i_G:  G\hookrightarrow X$ extends to an open immersion $i_{\mathcal{G}}: \mathcal{G} \hookrightarrow \mathcal{X}$ where $\mathcal{G}$ is a smooth group scheme over $O_{k,S}$ with $\mathcal{G}\times_{O_{k,S}}k=G$ and $\mathcal{X}$ is a smooth scheme over $O_{k,S}$ with $\mathcal{X}\times_{O_{k,S}}k=X$. Moreover, the action $G\times_k X\xrightarrow{a_X} X$ extends to an action $\mathcal{G}\times_{O_{k,S}}\mathcal{X}\xrightarrow{a_{\mathcal{X}}} \mathcal{X}$ which is compatible with the multiplication of $\mathcal{G}$.

ii) The torsor $Y\xrightarrow{\rho_X} X$ under $T$ extends to a torsor $\mathcal{Y} \xrightarrow{\rho_{\mathcal{X}}} \mathcal{X}$ under a smooth connected commutative group scheme $\mathcal{T}$ over $O_{k,S}$, where $\mathcal{Y}$ and $\mathcal{T}$ are smooth over $O_{k,S}$ such that $\mathcal{Y}\times_{O_{k,S}}k=Y$ and $\mathcal{T}\times_{O_{k,S}}k=T$. Moreover, the surjective homomorphism $H\xrightarrow{\rho_G} G$ extends to a surjective smooth homomorphism of smooth group schemes $\mathcal{H} \xrightarrow{\rho_{\mathcal{G}}} \mathcal{G}$ over $O_{k,S}$ with $\mathcal{H}\times_{O_{k,S}}k=H$.

iii) The surjective homomorphism $H\xrightarrow{\psi} T_1$ extends to a smooth surjective homomorphism of smooth group schemes $\mathcal{H}\xrightarrow{\psi} \mathcal{T}_1$ over $O_{k,S}$ such that $\mathcal{H}_0=\psi^{-1}(1_{\mathcal{T}_1})$ is a connected smooth group scheme over $O_{k,S}$ by Lemma \ref{h_0} with $\mathcal{T}_1\times_{O_{k,S}}k=T_1$ and $\mathcal{H}_0\times_{O_{k,S}}k=H_0$. The extension of $\psi$ in Lemma \ref{ext} extends to a smooth morphism $\mathcal{Y}\xrightarrow{\psi} \mathcal{T}_1$ such that $\mathcal{Y}_0=\psi^{-1}(1_{\mathcal{T}_1})$ with $\mathcal{Y}_0\times_{O_{k,S}}k=Y_0$.

iv) The open immersion $i_H:  H\hookrightarrow Y$ extends to an open immersion $i_{\mathcal{H}}: \mathcal{H} \hookrightarrow \mathcal{Y}$ such that the action $H\times_k Y\xrightarrow{a_Y} Y$ in Lemma \ref{a_Y} extends to an action $\mathcal{H}\times_{O_{k,S}}\mathcal{Y}\xrightarrow{a_{\mathcal{Y}}} \mathcal{Y}$ which is compatible with the multiplication of $\mathcal{H}$.

\begin{lem}\label{loc-int} For any $v\not\in S$, one has
$$ G(k_v) \cap \mathcal{X}(O_v) = \mathcal{G}(O_v) \cdot \rho_{\mathcal{X}}(H_0(k_v)\cap \mathcal{Y}_0(O_v)) $$
\end{lem}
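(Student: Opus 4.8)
The plan is to prove the two inclusions separately; the inclusion ``$\supseteq$'' is formal, while the inclusion ``$\subseteq$'' rests on Lang's theorem together with Hensel's lemma to produce integral points in the relevant torsors over $O_v$. For ``$\supseteq$'', take $g\in\mathcal{G}(O_v)$ and $h_0\in H_0(k_v)\cap\mathcal{Y}_0(O_v)$. Then $\rho_{\mathcal{X}}(h_0)\in\mathcal{X}(O_v)$ because $\rho_{\mathcal{X}}$ is defined over $O_{k,S}$, and $\rho_X(h_0)\in G(k_v)$ because $h_0\in H_0(k_v)\subseteq H(k_v)$ and $\rho_X|_H=\rho_G$ takes values in $G$. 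By condition (i) the action $a_{\mathcal{X}}$ is defined over $O_{k,S}$ and restricts to $m_{\mathcal{G}}$, while $a_X|_{G\times_k G}=m_G$; hence $g\cdot\rho_{\mathcal{X}}(h_0)=a_{\mathcal{X}}(g,\rho_{\mathcal{X}}(h_0))$ lies in $\mathcal{X}(O_v)$ and equals $m_G(g,\rho_X(h_0))\in G(k_v)$, giving the claim.

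For the reverse inclusion, fix $x\in G(k_v)\cap\mathcal{X}(O_v)$. First I would lift $x$ to $\mathcal{Y}(O_v)$: the pullback $\rho_{\mathcal{X}}^{-1}(x)\to\Spec O_v$ is a torsor under $\mathcal{T}$ by condition (ii), so it is smooth over $\Spec O_v$, and since $v\notin S$ its special fibre is a torsor under a connected smooth group over the finite residue field of $O_v$, hence trivial by Lang's theorem; the resulting rational point of the special fibre lifts by smoothness and Hensel's lemma, so there is $y\in\mathcal{Y}(O_v)$ with $\rho_{\mathcal{X}}(y)=x$. Next I would move $y$ into $\mathcal{Y}_0$. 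The same Lang--Hensel argument applied to the connected smooth group scheme $\mathcal{H}_0$ of condition (iii) shows that every $\mathcal{H}_0$-torsor over $O_v$ is trivial, hence that $\mathcal{H}(O_v)\to\mathcal{T}_1(O_v)$ is surjective; choose $h\in\mathcal{H}(O_v)$ with $\psi(h)=\psi(y)$ and put $y_0:=a_{\mathcal{Y}}(h^{-1},y)\in\mathcal{Y}(O_v)$. Since $\mathcal{T}_1$, $\mathcal{X}$ and $\mathcal{Y}$ are separated over $O_{k,S}$, their $O_v$-points inject into their $k_v$-points, so the generic-fibre identities $\psi(a_Y(h^{-1},y))=\psi(h)^{-1}\psi(y)$ of Lemma~\ref{ext} and $\rho_X(a_Y(h^{-1},y))=a_X(\rho_G(h)^{-1},\rho_X(y))$ of Lemma~\ref{a_Y} propagate to the $O_v$-points at hand; this yields $\psi(y_0)=1_{\mathcal{T}_1}$, i.e.\ $y_0\in\mathcal{Y}_0(O_v)$, and $x=a_{\mathcal{X}}(\rho_{\mathcal{G}}(h),\rho_{\mathcal{X}}(y_0))$ with $\rho_{\mathcal{G}}(h)\in\mathcal{G}(O_v)$.

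It then remains to check that $y_0$ lies in $H_0(k_v)$, not merely in $\mathcal{Y}_0(O_v)$. Indeed $\rho_X(y_0)=m_G(\rho_G(h)^{-1},x)\in G(k_v)$, because $x\in G(k_v)$, $\rho_G(h)\in\mathcal{G}(O_v)\subseteq G(k_v)$ and $a_X|_{G\times_k G}=m_G$; since $H=Y\times_X G=\rho_X^{-1}(G)$ and $G\hookrightarrow X$ is a monomorphism, the $k_v$-point $y_0$ of $Y$ factors through $H$, and then $\psi(y_0)=1$ forces $y_0\in H_0(k_v)$. Hence $y_0\in H_0(k_v)\cap\mathcal{Y}_0(O_v)$ and $x=\rho_{\mathcal{G}}(h)\cdot\rho_{\mathcal{X}}(y_0)\in\mathcal{G}(O_v)\cdot\rho_{\mathcal{X}}(H_0(k_v)\cap\mathcal{Y}_0(O_v))$, which completes the argument.

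I expect the genuine obstacle to be the two lifting steps in the second inclusion: one needs the vanishing of $H^1(O_v,-)$ for the connected smooth group schemes $\mathcal{T}$ and $\mathcal{H}_0$, which is exactly where the hypothesis $v\notin S$ (ensuring the existence of these integral models with connected special fibres and the finiteness of the residue field) and Lang's theorem enter. The remaining points are bookkeeping: transporting the compatibility square of Lemma~\ref{a_Y} and the $H$-equivariance of $\psi$ from the generic fibre to $O_v$-points via separatedness, and recognising $y_0$ as a point of $H_0$ rather than just of $\mathcal{Y}_0$.
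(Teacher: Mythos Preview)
Your argument is correct and follows essentially the same route as the paper's proof: both hinge on the vanishing of $H^1_{\et}(O_v,\mathcal{T})$ and $H^1_{\et}(O_v,\mathcal{H}_0)$ to produce first a lift $y\in\mathcal{Y}(O_v)$ of $x$ and then an element $h\in\mathcal{H}(O_v)$ with $\psi(h)=\psi(y)$, after which $h^{-1}y$ does the job. The paper simply states these cohomological vanishings, while you spell out the underlying Lang--Hensel mechanism and also write out the formal inclusion ``$\supseteq$'' and the verification $y_0\in H_0(k_v)$ that the paper leaves implicit (noting instead that $y\in H(k_v)$ from the outset, since $H=\rho_X^{-1}(G)$ and $\rho_X(y)=x\in G(k_v)$).
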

\begin{proof} Since $H^1_{et}(O_v, \mathcal{T})=0$, one has $\rho_{\mathcal{X}}(\mathcal{Y}(O_v))=\mathcal{X}(O_v)$. For any $x\in G(k_v) \cap \mathcal{X}(O_v)$, there is $y\in H(k_v)\cap \mathcal{Y}(O_v)$ such that $x=\rho_{\mathcal{X}}(y)$ because $H$ is the pull-back of $Y$ over $G$. Since $H^1_{et}(O_v, \mathcal{H}_0)=0$, there is $h\in \mathcal{H}(O_v)$ such that $\psi(h)=\psi(y)$. This implies that $h^{-1}y\in H_0(k_v)\cap \mathcal{Y}_0(O_v)$. Therefore
$$ x = \rho_{\mathcal{X}}(h) \cdot \rho_{\mathcal{X}}(h^{-1} y) \in \mathcal{G}(O_v) \cdot \rho_{\mathcal{X}}(H_0(k_v)\cap \mathcal{Y}_0(O_v)) $$ as required.
\end{proof}

For a connected linear algebraic group $G$, one defines the set
$$ \Sha^1(k, G)= \ker (H^1(k, G) \rightarrow \prod_{v\in \Omega_k} H^1(k_v, G)) . $$

The statement below gathers classical theorems on the Hasse Principle.

\begin{thm} \label{sha} Let $G_1$ be a connected quasi-trivial linear algebraic group over a number field $k$. Then

(1) One has $H^1(k_v, G_1)=\{1\}$ for any no real prime $v$ of $k$;

(2) One has that $G_1$ satisfies weak approximation; 

(3) One has $H^1(k, G_1)\cong \prod_{v \ \text{real}} H^1(k_v, G_1)$, and then $\Sha^1(k, G_1)=\{1\} $.  \end{thm}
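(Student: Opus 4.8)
The plan is to peel $G_1$ into its elementary layers and reduce each of the three assertions to a classical local or global fact, propagated along two short exact sequences. Quasi-triviality of $G_1$ yields
$$ 1 \longrightarrow G_1^{lin} \longrightarrow G_1 \xrightarrow{\varsigma_{G_1}} G_1^{tor} \longrightarrow 1, $$
in which $G_1^{tor}$ is a quasi-trivial torus, say $G_1^{tor}\cong\prod_i R_{K_i/k}(\mathbb{G}_m)$ for finite field extensions $K_i/k$, and $G_1^{lin}=\ker(\varsigma_{G_1})$ is connected with $R_u(G_1^{lin})=R_u(G_1)$ and $(G_1^{lin})^{red}=G_1^{ss}=G_1^{sc}$ semisimple simply connected (this uses the commutative diagram displayed just before Proposition~\ref{br} together with $\Pic(G_{1,\bar k})=0$, cf.\ \cite{CT08}). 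Since $k$ has characteristic $0$, the Levi--Mostow decomposition splits the middle extension, so $G_1^{lin}\cong R_u(G_1)\rtimes G_1^{sc}$ over $k$, and $R_u(G_1)$ is $k$-split unipotent, hence $k$-rational.

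For (1), fix a non-real place $v$. Then $H^1(k_v,R_u(G_1))=0$ (a split unipotent group is an iterated extension of $\mathbb{G}_a$'s and $H^1(k_v,\mathbb{G}_a)=0$), $H^1(k_v,G_1^{sc})=0$ (Kneser when $v$ is finite, trivially when $v$ is complex; see \cite{PR}), and $H^1(k_v,G_1^{tor})=\prod_i H^1((K_i)_w,\mathbb{G}_m)=0$ (Shapiro and Hilbert~90). In the exact sequence of pointed sets $H^1(k_v,G_1^{lin})\to H^1(k_v,G_1)\to H^1(k_v,G_1^{tor})$ the last term vanishes, so $H^1(k_v,G_1)$ is the image of $H^1(k_v,G_1^{lin})$; in the same way $H^1(k_v,G_1^{lin})$ is the image of $H^1(k_v,R_u(G_1))=0$ since $H^1(k_v,G_1^{sc})=0$. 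Hence $H^1(k_v,G_1)=\{1\}$, and because the intermediate terms already vanish no twisted forms ever enter.

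For (2), each layer satisfies weak approximation: $R_u(G_1)$ and $G_1^{tor}=\prod_i R_{K_i/k}(\mathbb{G}_m)$ because they are $k$-rational, and $G_1^{sc}$ by Platonov's theorem (Ch.~7 of \cite{PR}). The splitting $G_1^{lin}\cong R_u(G_1)\rtimes G_1^{sc}$ makes weak approximation descend at once to $G_1^{lin}$, and it then passes to $G_1$ through $1\to G_1^{lin}\to G_1\to G_1^{tor}\to 1$, using that $G_1^{tor}$ has weak approximation, $H^1(k,G_1^{tor})=0$, and $\Sha^1(k,G_1^{lin})=\{1\}$. For (3), by (1) the localization map becomes $H^1(k,G_1)\to\prod_{v\,\mathrm{real}}H^1(k_v,G_1)$; its injectivity is precisely $\Sha^1(k,G_1)=\{1\}$, and together with surjectivity it is the asserted bijection. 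Both statements reduce, along the same two dévissages — using $H^1(-,G_1^{tor})=0$ over $k$ and over each $k_v$, and $H^1(-,G_1^{lin})\cong H^1(-,G_1^{sc})$ which follows from the split extension and $H^1(-,R_u(G_1))=0$ — to the classical Hasse principle $H^1(k,G_1^{sc})\xrightarrow{\sim}\prod_{v\,\mathrm{real}}H^1(k_v,G_1^{sc})$ for simply connected semisimple groups (Kneser, Harder, Chernousov; see \cite{PR}). If $k$ is totally imaginary this simply reads $H^1(k,G_1)=\{1\}$.

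The only delicate point will be the bookkeeping at the real places in parts (2) and (3): there $G_1^{tor}$ and $G_1^{sc}$ do carry nontrivial cohomology, and weak approximation and the vanishing of $\Sha^1$ are really two facets of the density of $G_1(k)$ in the adèles away from $\infty_k$, so the cleanest route is to invoke Sansuc's arithmetic results for quasi-trivial groups (and \cite{CT08}) rather than to spell out each twisted-form argument by hand. Away from the real places all the pointed-set sequences collapse because the relevant $H^1$'s vanish, and there is nothing to twist.
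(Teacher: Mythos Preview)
Your argument is correct, but it takes a different d\'evissage from the paper's. The paper's proof of (1) and (3) is a one-line citation: apply Proposition~9.2 of \cite{CT08} to $G_1^{red}$ together with Lemma~1.13 of \cite{Sansuc} (or Lemma~7.3 of \cite{GM}). For (2), the paper peels off the unipotent radical first, via
\[
1\longrightarrow R_u(G_1)\longrightarrow G_1\longrightarrow G_1^{red}\longrightarrow 1,
\]
and uses $H^1(k,R_u(G_1))=H^1(k_v,R_u(G_1))=0$ to get exactness on points, then invokes weak approximation for $R_u(G_1)$ (rational) and for $G_1^{red}$ (again Prop.~9.2 of \cite{CT08}) in a simple diagram chase.

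You instead peel off the toric quotient first, via $1\to G_1^{lin}\to G_1\to G_1^{tor}\to 1$, and push the reduction all the way down to the classical input for $G_1^{sc}$ (Kneser's vanishing theorem, the Kneser--Harder--Chernousov Hasse principle, and weak approximation for simply connected groups --- the latter is due to Kneser--Harder rather than Platonov, though it also follows from strong approximation). This is more self-contained, since it unpacks exactly what \cite{CT08} Prop.~9.2 proves, but it buys you the twisting bookkeeping you flag in your last paragraph: the hypothesis $H^1(-,G_1^{tor})=0$ only gives \emph{surjectivity} of $H^1(-,G_1^{lin})\to H^1(-,G_1)$, not a bijection, so the reduction of (3) to the bijection for $G_1^{sc}$ is not quite as direct as your penultimate paragraph suggests --- one must still kill the $G_1^{tor}(k)$-orbits via weak approximation and the local constancy of the connecting maps, exactly the ``twisted-form argument'' you defer. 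The paper's choice of d\'evissage avoids this entirely, because every twist of a unipotent group in characteristic~$0$ still has trivial $H^1$, so $H^1(-,G_1)\xrightarrow{\sim}H^1(-,G_1^{red})$ is an honest bijection and the remaining reductive case is handed off to \cite{CT08}.
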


\begin{proof} The results of (1) and (3) follow from applying Proposition 9.2 in \cite{CT08} to $G_1^{red}$ and Lemma 1.13 in \cite{Sansuc} (see also Lemma 7.3 in \cite{GM}). 

For (2), one has the following digram of short exact sequences
\[ \begin{CD}
 R_u(G_1)(k) @>>> G_1(k) @>>> G_1^{red}(k) @>>>  H^1(k, R_u(G_1))=1  \\
@VVV @VVV @VVV @VVV \\
\prod_{v} R_u(G_1)(k_v) @>>> \prod_{v} G_1(k_v) @>>> \prod_{v} G_1^{red}(k_v) @>>> \prod_{v}H^1(k_v, R_u(G_1))=1.
 \end{CD} \]
Since $G_1^{red}$ satisfies weak approximation by Proposition 9.2 in \cite{CT08}, the result follows from weak approximation of $R_u(G_1)$ and tracing the above digram.
\end{proof}

In \cite{CX} Proposition 3.4, using Harari's work in \cite{Ha08}, we proved a relative strong approximation for tori. Using Demarche's  work in \cite{D} and the above construction, we now give a relative strong approximation theorem for arbitrary connected linear algebraic groups.

\begin{prop} \label{rel-sa} Let $G_1\xrightarrow{f} G_2$ be a homomorphism of connected linear algebraic groups over $k$ and $ \Br_a(G_2)\xrightarrow{f^*} \Br_a(G_1)$ the induced map. Suppose $G_i'(k_\infty)$ is not compact for any non-trivial simple factor $G_i'$ of $G_i^{sc}$ for $i=1,2$. If $\Sha^1(k,G_1)=\{1\}$,
then for any open subset $W$ of $G_2({\mathbf A}_k)_{\bullet}$ with $$W\cap G_2({\mathbf A}_k)_{\bullet}^{\ker f^*} \neq \emptyset ,$$ there are $x\in G_2(k)$ and $y\in G_1({\mathbf A}_k)_{\bullet}$ such that $x f(y) \in W$.
\end{prop}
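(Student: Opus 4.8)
The plan is to reduce the relative strong approximation statement for $G_1 \xrightarrow{f} G_2$ to the known strong approximation with Brauer–Manin obstruction for connected linear algebraic groups, i.e. Demarche's theorem in \cite{D}. First I would factor $f$ through its image: write $G_3 = f(G_1) \subseteq G_2$, a connected linear algebraic subgroup, so that $f = j \circ f'$ with $f' : G_1 \twoheadrightarrow G_3$ surjective and $j : G_3 \hookrightarrow G_2$ a closed immersion. The point of the hypothesis $\Sha^1(k, G_1) = \{1\}$ is precisely to control the fibers of $f'$ on rational and adelic points: the kernel $N = \ker f'$ is a connected linear algebraic group, and from the exact sequence $1 \to N \to G_1 \to G_3 \to 1$ together with $\Sha^1(k,G_1)=\{1\}$ one extracts that the natural map $G_1(k) \to G_3(k)$, and more importantly the adelic statement, behaves well. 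Concretely, one checks that $f'(G_1(\mathbf{A}_k)_\bullet)$ is an open (hence closed) subgroup of $G_3(\mathbf{A}_k)_\bullet$ and that $f'(G_1(k))$ maps onto $G_3(k) \cap f'(G_1(\mathbf{A}_k)_\bullet)$; here one uses that $H^1(k_v, N) \to $ is trivial at non-real places and the vanishing of $\Sha^1(k,G_1)$ to kill the obstruction classes globally.

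Next I would set up the Brauer–Manin bookkeeping. The induced map $f^* : \Br_a(G_2) \to \Br_a(G_1)$ factors as $\Br_a(G_2) \xrightarrow{j^*} \Br_a(G_3) \xrightarrow{f'^*} \Br_a(G_1)$, and the key algebraic fact to establish is that $\ker f'^* $ is controlled: since $f'$ is a surjection with connected kernel, one should have $\Br_a(G_3) \hookrightarrow \Br_a(G_1)$ up to the contribution of $\Br_a$ of the kernel, or at least that the pullback of $\ker f^*$ from $G_2$ to $G_3$ is contained in (a subgroup closely related to) $\ker j^*$-complement that is seen by $G_3$'s own strong approximation. The cleanest route: take the open set $W \subseteq G_2(\mathbf{A}_k)_\bullet$ with $W \cap G_2(\mathbf{A}_k)_\bullet^{\ker f^*} \neq \emptyset$, pick an adelic point $(z_v)$ in this intersection, and observe that since $(z_v)$ is orthogonal to $\ker f^*$ and $j^* \Br_a(G_2) \subseteq$ everything relevant, one can (after translating by a rational point of $G_3$ if necessary, using weak approximation properties) arrange that the relevant component lies in $G_3(\mathbf{A}_k)_\bullet$ and is orthogonal to $\Br_a(G_3)$, i.e. in $G_3(\mathbf{A}_k)_\bullet^{\Br_a(G_3)}$.

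Then I would apply Demarche's strong approximation theorem in \cite{D} to $G_3$: under the hypothesis that no non-trivial simple factor of $G_3^{sc}$ (which is a quotient factor of $G_1^{sc}$, so the non-compactness hypothesis is inherited) has compact group of real points, $G_3(k)$ is dense in $G_3(\mathbf{A}_k)_\bullet^{\Br_a(G_3)}$. This produces $x_3 \in G_3(k)$ lying in the open set $j^{-1}(W) \cap G_3(\mathbf{A}_k)_\bullet$ we arranged; then lift $x_3$ to $x \in G_2(k)$ via $j$ (so $x = j(x_3)$), and lift $x_3 \in G_3(k) \cap f'(G_1(\mathbf{A}_k)_\bullet)$ to $y \in G_1(\mathbf{A}_k)_\bullet$ with $f'(y) = x_3$, using the surjectivity-on-adeles statement from the first paragraph. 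Adjusting $y$ by an element of $N(\mathbf{A}_k)_\bullet$ (which is harmless since $N$ is connected linear, hence $N(\mathbf{A}_k)_\bullet$ surjects appropriately) so that $x f(y) = j(x_3) \cdot j(f'(y)^{-1} x_3)^{\pm} $ lands in $W$ — more precisely, since $f(y) = j(f'(y)) = j(x_3) = x$ when $f'(y) = x_3$ and $x = j(x_3)$, one gets $x^{-1} f(y) = 1$, so I should instead choose $x = 1 \in G_2(k)$ when convenient or absorb the discrepancy: the honest statement is to find $x \in G_2(k)$ and $y$ with $x f(y) \in W$, and since we have produced $f'(y) = x_3$ and $j(x_3) \in W$, taking $x = e$ gives $f(y) = j(x_3) \in W$ directly — but $x_3$ was chosen in $j^{-1}(W)$ up to a rational translation, so $x$ is that translation.

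The main obstacle I anticipate is the second paragraph: making precise the relationship between $\ker f^*$ on $G_2$ and the Brauer–Manin condition on $G_3$ that Demarche's theorem requires, including tracking the real-place components $\pi_0$ carefully (the $(-)_\bullet$ decoration) and ensuring the translation by a rational point of $G_3$ does not destroy openness of $j^{-1}(W)$ or membership in the Brauer set — this uses weak approximation for $G_3^{tor}$-type quotients and the fact that $\Br_a$ classes are locally constant. The reduction in the first paragraph (surjectivity on adelic points modulo $\Sha^1$) is standard cohomology of exact sequences but needs the hypothesis $\Sha^1(k,G_1)=\{1\}$ used exactly once, to guarantee a global lift of the adelic family once local lifts exist everywhere.
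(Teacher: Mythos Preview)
Your factorization through the image $G_3 = f(G_1)$ sends the argument in the wrong direction, and the gap you yourself flag in the last paragraph is real and not repairable along these lines. The open set $W$ lives in $G_2(\mathbf{A}_k)_\bullet$ and the point $(z_v)\in W$ orthogonal to $\ker f^*$ has no reason to lie in, or even near, the closed subset $G_3(\mathbf{A}_k)_\bullet$; in particular $j^{-1}(W)$ may be empty. Translating by a rational point of $G_3$ cannot move $(z_v)$ into $G_3$, and applying Demarche's theorem to $G_3$ would at best produce $G_3(k)$-points inside $j^{-1}(W)$, which is not what is asked. The conclusion $x f(y)\in W$ allows $x$ to range over all of $G_2(k)$, so the relevant object is the union of cosets $G_2(k)\cdot f(G_1(\mathbf{A}_k)_\bullet)$ inside $G_2(\mathbf{A}_k)_\bullet$, not the subgroup $G_3(\mathbf{A}_k)_\bullet$.

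The paper's argument is much shorter and uses Demarche's result in a different way: not as strong approximation for an auxiliary group, but via the exact sequence it provides. Corollary 3.20 of \cite{D} (for $S_0=\infty_k$) gives, for each $i$, an exact sequence
\[
1 \longrightarrow \overline{G_i(k)} \longrightarrow G_i(\mathbf{A}_k)_\bullet \longrightarrow \Br_a(G_i)^D \longrightarrow \Sha^1(k,G_i)\longrightarrow 1,
\]
and these are compatible via $f$ and $(f^*)^D$. Saying $\alpha\in G_2(\mathbf{A}_k)_\bullet^{\ker f^*}$ means precisely that the image of $\alpha$ in $\Br_a(G_2)^D$ vanishes on $\ker f^*$, hence lies in the image of $(f^*)^D:\Br_a(G_1)^D\to\Br_a(G_2)^D$. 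The hypothesis $\Sha^1(k,G_1)=\{1\}$ is used here, and only here, to make $G_1(\mathbf{A}_k)_\bullet\to\Br_a(G_1)^D$ surjective; one then picks $y\in G_1(\mathbf{A}_k)_\bullet$ whose image in $\Br_a(G_1)^D$ maps to the image of $\alpha$. Then $\alpha f(y)^{-1}$ has trivial image in $\Br_a(G_2)^D$, so it lies in $\overline{G_2(k)}$, and since $Wf(y)^{-1}$ is an open neighbourhood of it one finds $x\in G_2(k)$ with $xf(y)\in W$. No factorization through $G_3$, no lifting of rational points, no separate analysis of $\ker f'$ is needed.
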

\begin{proof} Let $R_u(G_i)$ be the unipotent radical of $G_i$ and $G_i^{red}$ be the reductive part of $G_i$ for $i=1,2$. Since $H^1(\Bbb R, R_u(G_i))=1$, one has
$$ 1\rightarrow R_u(G_i)(\Bbb R) \rightarrow G_i(\Bbb R) \rightarrow G_i^{red}(\Bbb R) \rightarrow 1  $$ for $i=1,2$. This induces an isomorphism
$$\pi_0(G_i^{red}(\Bbb R))\cong \pi_0 (G_i(\Bbb R))$$ by connectedness of $R_u(G_i)(\Bbb R)$ for $i=1,2$.

Let $G_i^{sc}$ be a semi-simple simply connected covering of the semi-simple part $G_i^{ss}=[G_i^{red}, G_i^{red}]$ of $G_i$ for $i=1,2$. Then $G_i^{sc}(\Bbb R)$ is connected for $i=1,2$ by Proposition 7.6 of Chapter 7 in \cite{PR}. This implies that $G_i^{scu}(\Bbb R)$ in \cite{D} Corollary 3.20 is connected.

By Demarche's work \cite{D}  Corollary 3.20 for $S_0=\infty_k$, there are compatible exact sequences:
\[ \begin{CD}
1 @>>> \overline{G_1(k)} @>>> G_1({\mathbf A}_k)_{\bullet} @>>> \Br_a(G_1)^D @>>> \Sha^1(k,G_1)=\{1\}  \\
@.  @VVV @VV{f}V @VV{(f^*)^D}V @VVV  \\
1 @>>> \overline{G_2(k)} @>>> G_2({\mathbf A}_k)_{\bullet} @>>> \Br_a(G_2)^D @>>> \Sha^1(k,G_2) @>>> 1
 \end{CD} \]
 where $(-)^D:=\hom(-,\Bbb Q/\Bbb Z)$.
 There exists $y\in G_1({\mathbf A}_k)_{\bullet}$ such that $$\alpha f(y)^{-1} \in \overline{G_2(k)} \ \ \text{ for} \ \  \alpha \in  W\cap G_2({\mathbf A}_k)_{\bullet}^{\ker f^*} $$ by the above diagram. Since $W f(y)^{-1}$ is an open subset containing $\alpha f(y)^{-1}$, one concludes that there is $x\in G_2(k)$ such that $xf(y)\in W$ as required. \end{proof}

The following proposition benefits from discussion with J.-L. Colliot-Th\'el\`ene and Dasheng Wei which refines Proposition 3.6 in \cite{CX}. 

\begin{prop} \label{sign} Suppose $U$ is an open subset of $n$-dimensional affine space $\Bbb A^n$ over $k$ such that  $\codim(\Bbb A^n\setminus U, \Bbb A^n)\geq 2$ and $v_0$ is a real prime of $k$. Let
$$ U_{v_0}^+=U(k_{v_0})\cap \{ (x_1, \cdots, x_n)\in k_{v_0}^n : \ x_i >0 \ \text{in} \ k_{v_0} \ \text{with} \ 1\leq i\leq n\} $$
where $k_{v_0}^n=\Bbb A^n(k_{v_0})$ by fixing the coordinates over $k$. If $W$ is a non-empty open subset of $U({\mathbf A}_k^{v_0})$ where ${\mathbf A}_k^{v_0}$ is the adeles of $k$ without $v_0$-component,  then
$$ U(k) \cap (U_{v_0}^+ \times  W) \neq \emptyset .  $$
\end{prop}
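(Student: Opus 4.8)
The plan is to reduce the statement to strong approximation for a suitable torus, exploiting the hypothesis $\codim(\Bbb A^n\setminus U,\Bbb A^n)\geq 2$ to move freely between $U$ and $\Bbb A^n$. First I would observe that, since $\codim(\Bbb A^n\setminus U,\Bbb A^n)\geq 2$, every invertible function on $U_{\bar k}$ is constant, so $\Br_a(U)$ injects into (in fact equals) $\Br_a(\Bbb A^n)=0$; hence $U$ has no Brauer--Manin obstruction to strong approximation, and it suffices to prove that $U(k)$ is dense in the relevant adelic constraint set. The real prime $v_0$ plays the role of the place off which we approximate: we want a $k$-point lying in $W$ at all places $v\neq v_0$ and in the prescribed positive cone $U_{v_0}^+$ at $v_0$.

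The key device is to realize $U$ as an $(\Bbb G_m^n)$-groupic variety — indeed $\Bbb G_m^n\subset U\subset \Bbb A^n$ with the obvious translation-multiplication action, after possibly shrinking so that $\Bbb G_m^n\subset U$ (which one may do since removing more codimension-$\geq 2$ stuff does not affect $k$-points density arguments, or one argues directly that $U\cap\Bbb G_m^n$ is dense). Then I would invoke strong approximation with Brauer--Manin obstruction for the open toric variety $U$, which is exactly the main result of \cite{CX} (and reproved here), together with the vanishing $\Br_a(U)=0$ just noted and the non-compactness hypothesis being vacuous for a torus (whose $G^{sc}$ is trivial). This gives that $U(k)$ is dense in $U(\mathbf A_k)_\bullet$. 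The point about the positive cone at $v_0$: the set $U_{v_0}^+$ is a nonempty open subset of $U(k_{v_0})$ — nonempty because $(1,\dots,1)\in\Bbb G_m^n(k_{v_0})\cap U_{v_0}^+$ after translating $W$ appropriately, or more carefully because $U_{v_0}^+$ contains a translate of a small box and $\Bbb A^n\setminus U$ has codimension $\geq 2$ so cannot contain the positive orthant. Thus $U_{v_0}^+\times W$ is a nonempty open subset of $U(\mathbf A_k)_\bullet$, and density of $U(k)$ produces the desired rational point.

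The main obstacle I anticipate is the handling of the archimedean component and the $\pi_0$ subtlety in the definition of $X(\mathbf A_k)_\bullet$: strong approximation off $\infty_k$ controls $k$-points only up to connected components at the real places, whereas here we must land in a specific connected component (the positive orthant) at $v_0$. The resolution is that the positivity condition $x_i>0$ for all $i$ cuts out precisely one connected component of $\Bbb G_m^n(k_{v_0})$ (hence an open-and-closed piece of $U(k_{v_0})$ meeting that component), so demanding membership in $U_{v_0}^+$ is exactly a condition on $\pi_0(U(k_{v_0}))$ plus an open condition — which is what $X(\mathbf A_k)_\bullet$ is built to accommodate. One must check that the component of $U(k_{v_0})$ singled out by positivity is nonempty and that it is realized by a rational point once we also impose $W$; this follows from strong approximation applied with $B=\Br_1(U)=\Br(k)$, under which the Brauer--Manin pairing is trivial. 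A secondary point requiring care is justifying $\Br_a(U)=0$: one uses that $\Pic(\Bbb A^n_{\bar k})=0$ and $\bar k[\Bbb A^n]^\times=\bar k^\times$, the codimension hypothesis to transfer these to $U$, and then the Hochschild--Serre computation of $\Br_a$ for a variety with trivial geometric Picard group and trivial geometric units, exactly as in \cite{CTX} Lemma 2.1.
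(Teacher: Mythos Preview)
Your approach has a genuine gap, and it lies precisely in the ``main obstacle'' you flagged but did not actually resolve.

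First, $U$ need not be a $\Bbb G_m^n$-groupic variety: the hypothesis is only that $Z=\Bbb A^n\setminus U$ has codimension $\geq 2$, and $Z$ may well meet $\Bbb G_m^n$ (e.g.\ $Z$ a generic line in $\Bbb A^3$). You cannot ``shrink so that $\Bbb G_m^n\subset U$''; shrinking $U$ can only enlarge the complement. So the appeal to \cite{CX} for toric varieties is not available for $U$ itself.

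Second, and more seriously, even granting $\Br_a(U)=0$ and that $U$ satisfies strong approximation off $\infty_k$ in the sense of Definition~\ref{sa}, this gives you \emph{no} control at $v_0$. Indeed, since $Z(\Bbb R)$ has real dimension $\leq n-2$, the set $U(k_{v_0})=\Bbb R^n\setminus Z(\Bbb R)$ is \emph{connected}; thus $\pi_0(U(k_{v_0}))$ is a single point and the positive orthant $U_{v_0}^+$ is a proper open subset of the unique connected component, not a $\pi_0$-condition. Your claimed resolution --- that positivity cuts out a component of $\Bbb G_m^n(k_{v_0})$ --- is true for the torus but irrelevant once you are working with $U(\mathbf A_k)_\bullet$. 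Moreover, when $k$ has several archimedean places, $W$ imposes genuine open conditions at each $v\in\infty_k\setminus\{v_0\}$, which the $\bullet$-version again does not capture. Finally, invoking any result of this paper off a single place would be circular: Corollary~\ref{str-ref} and Proposition~\ref{refine-add} both rest on the present proposition.

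The paper's proof is entirely different and elementary: it inducts on $n$ via the coordinate projection $p:\Bbb A^n\to\Bbb A^1$, and the heart of the argument is the $n=1$ step --- producing an element of $k$ that is positive at $v_0$ and lies in prescribed open sets at all other places. This is strictly stronger than strong approximation for $\Bbb A^1$ off $\{v_0\}$ and is achieved by a direct construction (Dirichlet for $k=\Bbb Q$; for general $k$, perturbing a strong-approximation solution by a large power of a unit $\epsilon\in O_k^\times$ with $\epsilon>1$ at $v_0$ and $|\epsilon|_v<1$ at the other infinite places). One then chooses such a $y\in k$ with $\codim(p^{-1}(y)\cap Z,\,p^{-1}(y))\geq 2$ and applies the induction hypothesis to $U\cap p^{-1}(y)\subset\Bbb A^{n-1}$.
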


\begin{proof} 
Without loss of generality, one can assume that $W=\prod_{v\neq v_0} W_v $.  By using the fixed coordinates, we consider the projection to the first coordinate
$$p:  \Bbb A^n \rightarrow \Bbb A^1; \ \ \ (x_1, \cdots, x_n) \mapsto x_1. $$ It is clear that $p^{-1}(x) \cong \Bbb A^{n-1}$ over $k$ for any $x\in k$. 
Since $p^{-1}(x)(k_v)$ is Zariski dense in $p^{-1}(x)$ (see Theorem 2.2 in Chapter 2 of \cite{PR}) and $\dim(p^{-1}(x))> \dim(p^{-1}(x)\cap Z)$ for any $x\in \Bbb A^1(k_v)$,  one has
  $$ (p^{-1}(x)\cap U)(k_v) = p^{-1}(x)(k_v) \setminus (p^{-1}(x)\cap Z)(k_v) \neq \emptyset $$
  for any $v\in \Omega_k$. Since $p$ is smooth, one concludes that $p(U_{v_0}^+)\times \prod_{v\neq v_0} p(W_v)$ is an non-empty open subset of ${\bf A}_k$ and $p(U_{v_0}^+)=\Bbb R^+$ where $\Bbb R^+$ is the set of all positive real numbers.
  
When $k=\Bbb Q$, one has $$\Bbb Q\cap [\Bbb R^+\times  \prod_{v<\infty} p(W_v) ]\neq \emptyset$$ by Dirichlet's prime number theorem. 

Otherwise,  there is $\epsilon \in O_{k}^\times$ such that 
$ \epsilon >1$ at $v_0$ and $ |\epsilon|_{v}<1 $ for all $v\in \infty_k\setminus \{v_0\}$ by 33:8 in \cite{OM}. Let $\Sigma$ be a finite subset of $\Omega_k$ containing $\infty_k$ such that $p(W_v)=O_v$ for all $v \not \in \Sigma$ and  $\beta_v\in O_k$ such that $ord_v(\beta_v)>0$ and $ord_{w}(\beta_v)=0$ for $w\neq v$ and $w<\infty_k$ by finiteness of class number of $O_k$ for each $v< \infty_k$.  By strong approximation of ${\Bbb A}^1$, there is $a\in k$ such that  $a\in k_{v_0}\times \prod_{v\neq v_0} p(W_v)$. Let $l_v$ be a sufficiently large integer such that 
$ a +  \beta_v^{l_{v}}O_v \in p(W_v) $ for each $v\in \Sigma\setminus \infty_k$. Let $N$ be a sufficiently large integer such that
$$b=a+ \epsilon^{N} \prod_{v\in \Sigma \setminus \infty_k} \beta_v^{l_v} \in p(W_v) $$ for all $v\in \infty_k\setminus \{v_0\}$ and  is positive at $v_0$. Therefore 
$$ b\in k\cap [\Bbb R^+\times  \prod_{v\neq v_0} p(W_v)] \neq \emptyset . $$

If $\dim( p^{-1}(x)\cap Z)= \dim(Z)$, then there is a generic point $y$ of irreducible components of $Z$ such that $p(y)=x$. Since the irreducible components of $Z$ is finite, one has 
  $$ \{ x\in k:  \ \dim( p^{-1}(x)\cap Z)= \dim(Z) \} $$ is finite. There is $$y\in k \cap [\Bbb R^+\times  \prod_{v\neq v_0} p(W_v)] \ \ \text{ such that} \ \  \codim(p^{-1}(y)\cap Z, p^{-1}(y))\geq 2 . $$
  By induction on $U\cap p^{-1}(y)$, one gets 
$$ z\in (U\cap p^{-1}(y))(k) \cap [ (p^{-1}(y)(k_{v_0})\cap  U_{v_0}^+) \times \prod_{v\neq v_0} (p^{-1}(y)(k_v)\cap W_v)].$$  
Combing $z$ and $y$, one concludes $U(k) \cap (U_{v_0}^+ \times  W) \neq \emptyset$ .  
\end{proof}

\begin{cor}\label{str-ref} Let $(T_0\hookrightarrow V)$ be a standard toric variety over a number field $k$, $v_0\in \infty_k$ and ${\bf A}_k^{v_0}$ be the adeles of $k$ without $v_0$-component. If $W$ is a non-empty open subset of $V({\mathbf A}_k^{v_0})$ and $W_{v_0}$ is a connected component of $T_0(k_{v_0})$, then
$$ V(k) \cap (W_{v_0}\times  W) \neq \emptyset .  $$
\end{cor}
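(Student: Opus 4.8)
The plan is to deduce Corollary \ref{str-ref} from Proposition \ref{sign} by using the explicit structure of a standard toric variety $(T_0 \hookrightarrow V)$. Recall from the discussion before Proposition \ref{chi} (and Lemma 2.11 in \cite{CX}) that, after fixing a $\Z$-basis $x_1, \dots, x_s$ of $T_0^*$, one has $V = \bigcup_{i=1}^s S_i \subset \Spec(k[x_1,\dots,x_s]) \cong \Bbb A^s$ with $S_i$ obtained by inverting all coordinates except $x_i$. In particular $V$ is an open subvariety of $\Bbb A^s$ with $\codim(\Bbb A^s \setminus V, \Bbb A^s) \geq 2$, which is exactly the hypothesis on $U$ in Proposition \ref{sign}. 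Thus Proposition \ref{sign} applies with $U = V$, $n = s$ and $v_0 \in \infty_k$ the chosen real (or more generally archimedean) prime: it produces a $k$-point of $V$ lying in $V_{v_0}^+ \times W$ for any non-empty open $W \subset V({\mathbf A}_k^{v_0})$, where $V_{v_0}^+$ is the set of $v_0$-points with all coordinates positive.

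The remaining point is to match the connected component $W_{v_0}$ of $T_0(k_{v_0})$ against $V_{v_0}^+$. First I would note that $T_0(k_{v_0}) = \{(x_1,\dots,x_s) : x_i \in k_{v_0}^\times\}$ sits inside $V(k_{v_0})$, and $V_{v_0}^+ \subset T_0(k_{v_0})$ since positivity forces all coordinates to be nonzero. If $v_0$ is complex, then $T_0(k_{v_0}) \cong (\Bbb C^\times)^s$ is connected, so $W_{v_0} = T_0(k_{v_0})$ and $V_{v_0}^+ \subset W_{v_0}$ trivially, giving the claim directly from Proposition \ref{sign}. If $v_0$ is real, the connected components of $T_0(k_{v_0}) \cong (\Bbb R^\times)^s$ are indexed by the sign vectors $(\epsilon_1,\dots,\epsilon_s) \in \{\pm 1\}^s$, and $V_{v_0}^+$ is precisely the component with all signs positive. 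A general component $W_{v_0}$ is the translate of the positive component by a point $t \in T_0(k_{v_0})$ with sign vector equal to that of $W_{v_0}$; multiplication by such $t$ is an automorphism of $(T_0 \hookrightarrow V)$ over $k_{v_0}$ carrying $V_{v_0}^+$ onto a neighbourhood inside $W_{v_0}$, but since we want a $k$-rational conclusion we cannot translate by $t \notin T_0(k)$.

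To handle a non-positive component without sacrificing rationality, I would change coordinates on $T_0^*$: replace the basis $x_1,\dots,x_s$ by $x_1^{\pm 1},\dots,x_s^{\pm 1}$, i.e. invert those characters $x_i$ for which $W_{v_0}$ requires $x_i < 0$. This is again a $\Z$-basis of $T_0^*$, and the resulting standard toric variety built from this basis is canonically isomorphic to $V$ over $k$ (the standard toric variety of $K/k$ is intrinsic, by Proposition 2.10 in \cite{CX} and the uniqueness in Definition \ref{st-tv}; concretely, $x_i \mapsto x_i^{-1}$ on $\Bbb G_m$ extends to an automorphism of $\Bbb A^1$ only after inverting, so one works with the complement-codimension-$\geq 2$ model directly). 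With respect to the new coordinates, the component $W_{v_0}$ becomes the "all positive" component, so applying Proposition \ref{sign} to this model yields $z \in V(k)$ with $z \in W_{v_0} \times W$. The main obstacle is this last bookkeeping step: one must check carefully that inverting a subset of the basis characters does not change $V$ as a $k$-variety and maps the relevant component of $T_0(k_{v_0})$ to the positive orthant — equivalently, that the sign group $(\Bbb R^\times/\Bbb R^\times_{>0})^s$ acts on the set of standard-toric models compatibly with the coordinate inversions; once that is in hand, Proposition \ref{sign} does all the work.
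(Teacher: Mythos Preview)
There is a genuine gap. Your argument implicitly treats $T_0$ as a \emph{split} torus: the $\Z$-basis $x_1,\dots,x_s$ of $T_0^*$ you use (borrowed from the proof of Proposition~\ref{chi}) is constructed there only after passing to $\bar k$; over $k$ the torus $T_0$ is merely quasi-trivial, $T_0=\prod_i \Res_{K_i/k}(\Bbb G_m)$, and the characters $x_i$ need not be $k$-rational. In particular your description $T_0(k_{v_0})\cong (\Bbb R^\times)^s$ with components indexed by sign vectors is wrong when some $K_i$ has a complex place above $v_0$, and the $k$-rational affine coordinates on $\Res_{K/k}(\Bbb A^1)\cong\Bbb A^d$ are not the $x_i$ at all but the coefficients in a $k$-basis $1,\theta_i,\dots,\theta_i^{d_i-1}$ of $K_i$. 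Matching the positive orthant in \emph{those} coordinates against the identity component of $T_0(k_{v_0})$ is not automatic; it requires choosing each $\theta_i$ with prescribed positivity at the places above $v_0$, which is exactly what the paper does.

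Even in the split case your ``invert the offending coordinates'' trick fails: the map $x_i\mapsto x_i^{-1}$ is an automorphism of $T_0$ but does \emph{not} extend to $V$, since the divisor $\{x_i=0\}$ lies in $V$ while $\{x_i=\infty\}$ does not; you yourself note this and then assert without justification that the standard toric variety is unchanged, which is false. The idea you dismiss is in fact the right one: translate by a point $t\in T_0(k)$ with $t\cdot W_{v_0}$ equal to the identity component. Such a $t$ exists because $T_0$ is quasi-trivial, hence $T_0(k)$ is dense in $T_0(k_\infty)$ (weak approximation), so one can realise any prescribed component at $v_0$ by a $k$-rational translate. This translation \emph{does} extend to an automorphism of the $T_0$-variety $V$, reducing the general component to the identity-component case handled via Proposition~\ref{sign}. (A minor further point: Proposition~\ref{sign} is stated only for real $v_0$; when $v_0$ is complex, $T_0(k_{v_0})$ is connected and one can simply invoke strong approximation for $V$ off $\{v_0\}$ as in Corollary~3.7 of \cite{CX}.)
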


\begin{proof} We first prove the result is true if $W_{v_0}$ is the connected component of identity of $T_0(k_{v_0})$ as an $\Bbb R$-Lie group. If $v_0$ is complex, then $W_{v_0}=T_0(k_{v_0})$. Since $T_0(k_{v_0})$ is dense $V(k_{v_0})$, one has 
$ V(k) \cap (W_{v_0}\times  W) \neq \emptyset $ by Lemma 2.11 in \cite{CX} and Corollary 3.7 for $S=\{ v_0 \}$ in \cite{CX}. 

Otherwise, $v_0$ is real. Since $$T_0=\Res_{K_1/k}(\Bbb G_m) \times \cdots \times \Res_{K_s/k}(\Bbb G_m)$$ where $K_i=k(\theta_i)$ is a finite extension of $k$ with $d_i=[K_i : k]$ for $1\leq i\leq s$, one can choose $\theta_i$ such that $\theta_i$ is positive in $(K_i)_w$  for all real primes $w$'s of $K_i$ above $v_0$ and the real part $\Re(\theta_i^j)$ of $\theta_i^j$ in $(K_i)_w$ is positive for $1\leq j\leq d_i-1$ for all complex primes $w$'s of $K_i$ above $v_0$ for $1\leq i\leq s$. Fixing an isomorphism $\Res_{K_i/k}(\Bbb A^1) \rightarrow \Bbb A_k^{d_i}$ such that 
$$ \Res_{K_i/k}(\Bbb A^1)(A)=\sum_{j=0}^{d_i-1} A\theta_i^j \rightarrow A^{d_i}; \ \ \ \ \sum_{j=0}^{d_i-1} a_j \theta_i^j \mapsto (a_0, \cdots, a_{d_i-1}) $$ with $a_0, \cdots, a_{d_i-1}\in A$ for any $k$-algebra $A$ and $1\leq i\leq s$. By choosing such coordinates, one has  $T_0\subset V \subset \Bbb A^d$ over $k$ with $d=\sum_{i=1}^s d_i$ and $$T_0(k_{v_0})^+\supset \{(x_1,\cdots, x_d)\in k_{v_0}^d: \ x_i>0 \ \text{in $k_{v_0}$ for $1\leq i\leq d$} \} $$  where $T_0(k_{v_0})^+$ is the connected component of identity. One concludes $ V(k) \cap (W_\infty\times  W) \neq \emptyset $ 
by Proposition \ref{sign}. 

In general, since $T_0(k)$ is dense in $T_0(k_\infty)$, there is $t\in T_0(k)$ such that $t\cdot W_\infty =T_0(k_\infty)^+$. The result follows from applying the above result to open set $T_0(k_\infty)^+ \times t\cdot W$.  \end{proof}

We can prove strong approximation for $Y_0$ by using strong approximation with Brauer-Manin obstruction for $H_0$.

\begin{prop}\label{add} Under the assumptions of Theorem \ref{main}, the variety $Y_0$ satisfies strong approximation off $\infty_k$. \end{prop}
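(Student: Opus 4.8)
The plan is to deduce strong approximation for $Y_0$ from the relative strong approximation statement (Proposition~\ref{rel-sa}) applied to the homomorphism $\varsigma_{H_0}: H_0 \to T_0$, combined with the fibration $\varsigma_U: U \to V$ over the standard toric variety of Proposition~\ref{chi} and the sharpened approximation on toric varieties from Corollary~\ref{str-ref}. The key structural input is that $H_0$ is connected and quasi-trivial (Lemma~\ref{h_0}), so by Theorem~\ref{sha} one has $\Sha^1(k,H_0)=\{1\}$ and $H_0$ satisfies weak approximation; moreover $H_0^{sc}\cong G^{sc}$, so the non-compactness hypothesis of Theorem~\ref{main} is exactly what is needed to invoke Proposition~\ref{rel-sa} for $H_0 \to T_0$ (here $\ker \varsigma_{H_0}^*$ on Brauer groups is all of $\Br_a(T_0)$ since $\Br_a(H_0)$ is computed by $T_0^*$ via Lemma~\ref{h_0}, so the Brauer--Manin condition on the $T_0$-side is automatically satisfied).

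First I would fix a non-empty open set $\Xi \subset Y_0(\mathbf{A}_k)_{\bullet}$ that we wish to meet with $Y_0(k)$; since $\bar k[Y_0]^\times=\bar k^\times$ and $\Br_a(Y_0)=0$ by Proposition~\ref{y_0}, there is no Brauer obstruction to account for. Using Proposition~\ref{chi}, replace $Y_0$ by the open $H_0$-subvariety $U$ with $\codim(Y_0\setminus U, Y_0)\geq 2$; since codimension-$\geq 2$ removal does not affect adelic density off $\infty_k$ (the adelic points of $Y_0$ not lying in $U$ form a set of measure zero at each place and $U(k_v)$ is dense in $Y_0(k_v)$), it suffices to approximate in $U$. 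Then push forward along the smooth surjective morphism with geometrically integral fibres $\varsigma_U: U \to V$: the image $\varsigma_U(\Xi)$ (after shrinking) contains a non-empty open $W \subset V(\mathbf{A}_k)_{\bullet}$, and by Corollary~\ref{str-ref} applied with a single archimedean place $v_0$ we can find $v \in V(k)$ lying in $W$ while controlling its $v_0$-component to sit in the prescribed connected component of $T_0(k_{v_0})$. The fibre $U_v := \varsigma_U^{-1}(v)$ is then a non-empty smooth geometrically integral variety, and $U_v$ is a torsor (a principal homogeneous space) under $H_0^{\,v}:=\varsigma_{H_0}^{-1}(1)$ translated appropriately — more precisely $\varsigma_{H_0}^{-1}(v)$ is an $\ker(\varsigma_{H_0})$-torsor over the point $v$, and since $\ker(\varsigma_{H_0})$ is connected and quasi-trivial (Lemma~\ref{h_0}), it too satisfies the hypotheses for strong approximation off $\infty_k$ via Demarche's theorem (no Brauer obstruction, by quasi-triviality and Theorem~\ref{sha}).

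The main obstacle, and the step requiring care, is the descent along the fibration: one must show that an adelic point of $U$ whose image in $V$ has been corrected to a rational point $v$ can be further corrected, within the fibre $U_v$, to a rational point of $U_v$ lying in the prescribed adelic neighbourhood. This is precisely a relative strong approximation statement: knowing strong approximation (off $\infty_k$, with no Brauer obstruction) for the fibres $U_v$ — which are inner forms / torsors under the connected quasi-trivial group $\ker(\varsigma_{H_0})$ — together with strong approximation for $V$ off a single place, one glues via a standard fibration argument (as in the proof of Proposition~4.2 and Theorem~1.1 of \cite{CX}, or the toric case therein): pick the rational point on the base by Corollary~\ref{str-ref} so that at the auxiliary place $v_0$ the fibre still has the right local points, then invoke strong approximation in the fibre off $\infty_k$. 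Here the delicate point is the real-place bookkeeping: $\varsigma_{H_0}$ need not be surjective on $k_{v_0}$-points onto all of $T_0(k_{v_0})$, so one must arrange the rational point $v$ to land in the connected component hit by $\varsigma_{H_0}(U(k_{v_0}))$ — which is exactly why Corollary~\ref{str-ref} is phrased with a prescribed connected component of $T_0(k_{v_0})$. Once the base point is chosen compatibly, applying strong approximation for the quasi-trivial fibre group (Demarche's theorem, with trivial $\Sha^1$ and trivial $\Br_a$) completes the argument, and pulling the resulting rational point of $U_v \subset U \subset Y_0$ back gives the desired point of $Y_0(k)$ in $\Xi$.
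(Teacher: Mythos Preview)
Your overall strategy works, but it takes a different (and longer) route than the paper. Both arguments begin the same way: pass to the open $H_0$-subvariety $U$ of Proposition~\ref{chi}, push the given open set forward along the smooth surjection $\varsigma_U:U\to V$, and use Corollary~\ref{str-ref} to produce a rational point $x\in T_0(k)$ together with an adelic point of $W\cap \varsigma_{H_0}^{-1}(x)$. At this point the paper does something simpler than your fibre-by-fibre descent: since $\Pic((T_0)_{\bar k})=\Pic((H_0)_{\bar k})=0$ and $\chi^*:T_0^*\xrightarrow{\cong}\bar k[H_0]^\times/\bar k^\times$ by Lemma~\ref{h_0}, the pull-back $\varsigma_{H_0}^*:\Br_1(T_0)\to\Br_1(H_0)$ is an \emph{isomorphism}. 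Every class in $\Br_1(H_0)$ is therefore constant along each rational fibre of $\varsigma_{H_0}$, so the adelic point just found already lies in $H_0(\mathbf A_k)_\bullet^{\Br_1(H_0)}$, and one applies Demarche's theorem directly to $H_0$ to obtain $H_0(k)\cap W\neq\emptyset$. This bypasses entirely the need to trivialize the $\ker(\varsigma_{H_0})$-torsor $\varsigma_{H_0}^{-1}(x)$ and the real-place bookkeeping you flag as delicate. Your fibration approach is essentially the one the paper later uses for Proposition~\ref{refine-add} (arbitrary $S$), where the shortcut via $\Br_1(H_0)\cong\Br_1(T_0)$ is no longer sufficient.

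Two side remarks. First, you assert $\ker(\varsigma_{H_0}^*)=\Br_a(T_0)$; in fact $\varsigma_{H_0}^*$ is an isomorphism, so the kernel is trivial. Your conclusion that the Brauer--Manin condition in Proposition~\ref{rel-sa} is vacuous is nonetheless correct, but for the opposite reason: $\ker f^*=0$ gives $T_0(\mathbf A_k)_\bullet^{\ker f^*}=T_0(\mathbf A_k)_\bullet$. Second, the invocation of Proposition~\ref{rel-sa} for $H_0\to T_0$ in your opening paragraph is a red herring: that proposition produces approximations in the \emph{target} $T_0$, not in $H_0$ or $Y_0$, so it does not by itself yield strong approximation for $Y_0$; the argument you actually carry out is the fibration one.
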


\begin{proof} By Proposition \ref{chi}, there exists a standard toric variety $(T_0\hookrightarrow V)$ and an open $H_0$-subvariety $U\subset Y_0$ such that $H_0\subset U$, $\codim(Y_0\setminus U,Y_0)\geq 2$ and the morphism $H_0\xrightarrow{\varsigma_{H_0}}T_0$ can be extended to a smooth morphism $U\xrightarrow{\varsigma_U} V$  with nonempty and geometrically integral fibres. Then one only needs to show that $U$ satisfies strong approximation off $\infty_k$.

Let $W$ be a non-empty open subset of $U(\mathbf A_k)_{\bullet}$. Since $\varsigma_U$ is smooth and all fibers of $\varsigma_U$ are not empty and geometrically integral, one obtains that $\varsigma_U(W)$ is a non-empty open subset of $V(\Bbb A_k)_{\bullet}$. Applying Corollary \ref{str-ref}, one gets $x\in T_0(k)$ such that  $$\varsigma_{U}^{-1}(x) \cap W \neq \emptyset  . $$ Since $\varsigma_{U}^{-1}(x)$ is smooth and $\varsigma_{H_0}^{-1}(x) = \varsigma_{U}^{-1}(x) \cap H$ is an open dense subset of $\varsigma_{U}^{-1}(x)$, one further has
$$\varsigma_{H_0}^{-1}(x) \cap W \neq \emptyset  . $$

Since $Pic((T_0)_{\bar k})=Pic((H_0)_{\bar k})=0$, the quotient map $H_0\xrightarrow{\varsigma_{H_0}} T_0$ induces an isomorphism $$\Br_1(T_0)\cong\Br_1(H_0)$$ by Lemma \ref{h_0} above and Lemma 2.1 in \cite{CTX}.
Therefore
$$ (W\cap H_0({\bf A}_k)_{\bullet})^{\Br_1(H_0)} \supseteq (W\cap \varsigma_{H_0}^{-1}(x))^{\Br_1(H_0)} = W\cap \varsigma_{H_0}^{-1}(x) \neq \emptyset $$ by the functoriality of Brauer-Manin pairing.

By Lemma \ref{h_0}, one has $H_0^{sc}\xrightarrow{\cong}G^{sc}$.
Therefore one can apply Corollary 3.20 in Demarche \cite{D} to $H_0$ and obtains $H_0(k)\cap W\neq \emptyset .$ \end{proof}

\begin{proof} (\emph{Proof of Theorem \ref{main}.})

Let $W=\prod_{v\in \Omega_k} W_v$ be an open subset of $X(\mathbf A_k)_{\bullet}$ such that there exist
 $$  (x_v)_{v\in \Omega_k} \in W\cap X(\mathbf A_k)_{\bullet}^{\Br_1 X}  $$
and a sufficiently large finite subset $S_1$ of $\Omega_k$ containing $S$ with $W_v={\mathcal X}(O_v)$ for all $v\not\in S_1$. By Proposition \ref{local}, one can assume that $x_v\in G(k_v)$ for all $v\in \Omega_k$. Then  $$x_v\in W_v \cap G(k_v)={\mathcal X}(O_v)\cap G(k_v) ={\mathcal G}(O_v)\cdot \rho_{\mathcal{X}}(H_0(k_v)\cap \mathcal{Y}_0(O_v))$$ for $v\not\in S_1$ by Lemma \ref{loc-int}. Let $$g_v\in {\mathcal G}(O_v) \ \ \ \text{and} \ \ \ \beta_v\in {\mathcal Y}_0(O_v)\cap H_0(k_v)$$ such that $x_v=g_v \cdot \rho_{\mathcal X}(\beta_v)$ for all $v\not\in S_1$ and $g_v=x_v$ for $v\in S_1$. Then $(g_v)_{v\in \Omega_k}\in G(\mathbf A_k)$.

By Proposition \ref{y_0}, one has
$$ \inv_v(\xi(x_v))= \inv_v(\xi(g_v\cdot \rho_X(\beta_v)))=\inv_v((\rho_X^* g_v^* \xi)(\beta_v))=\inv_v((\rho_X^* g_v^* \xi)(1_{H_0}))= \inv_v(\xi(g_v))$$  for all $\xi\in \Br_1(X)$ and $v\in \Omega_k$.
Since $H_0$ is quasi-trivial by Lemma \ref{h_0}, the set $\Sha^1(k, H_0)=\{1\}$ by Theorem \ref{sha}.
By Lemma \ref{h_0},  for any non-trivial simple factor $H'$ of $H_0^{sc}$, $H'(k_\infty)$ is not compact.

Applying Proposition \ref{rel-sa} to $H_0\rightarrow G$, and using Proposition \ref{br},  there exist $g\in G(k)$ and $y_\mathbf A\in H_0(\mathbf A_k)_{\bullet}$ such that $$g\rho_X (y_\mathbf A)\in (\prod_{v\in \infty_k} i_{G}^{-1}(W_v) \times \prod_{v\in S_1\setminus \infty_k} (W_v\cap G(k_v)) \times \prod_{v\not\in S_1} {\mathcal G}(O_v)) $$ where the open immersion $i_G: G\hookrightarrow X$ induces $i_G: \pi_0(G(k_v)) \rightarrow \pi_0(X(k_v))$ for $v\in \infty_k$. This implies that $ y_{\mathbf A}$ is in the open subset $\rho_{X}^{-1}(g^{-1} \cdot W)$ of $Y_0(\mathbf A_k)_{\bullet}$.
By Proposition \ref{add}, one concludes that there is $$y\in Y_0(k) \cap \rho_{X}^{-1}(g^{-1} \cdot W)$$ and this is equivalent to that $g \cdot \rho_{X} (y) \in  W $ as desired. \end{proof}

\section{Approximation at archimedean primes}\label{ss}

For classical strong approximation, there is no difference between the archimedean primes and the non-archimedean primes. In this section, we discuss strong approximation off $any$ finite non-empty subset $S$ of $\Omega_k$.  We keep the same notation as that in the previous sections and assume $k$ is a number field in this section.

First one needs to modify Definition \ref{sa} as follows.  

\begin{definition} \label{msa} Let $S$ be a non-empty finite subset of $\Omega_k$ for a number field $k$ and $X$ be a variety over $k$ and $pr^S$ be the projection $X({\bf A}_k)\rightarrow X({\bf A}_k^S)$ where ${\bf A}_k^S$ is the adeles of $k$ without $S$-components.

(1) If $X(k)$ is dense in $pr^S(X({\mathbf A}_k))$, we say $X$ satisfies strong approximation off $S$.

(2) If $X(k)$ is dense in $ pr^S(X({\mathbf A}_k)^{B})$ for some subset $B$ of $\Br(X)$, we say $X$ satisfies strong approximation with respect to $B$ off $S$.
\end{definition}

One can refine Proposition \ref{add} to adapt for any finite subset $S$ by applying the fibration method in \cite{CTX1}.

\begin{prop}\label{refine-add} Let $Y_0$ be a variety given by Proposition \ref{y_0}  over a number field $k$ and $S$ be a non-empty finite subset of $\Omega_k$.  If $\prod_{v\in S} G'(k_v)$ is not compact for any non-trivial simple factor $G'$ of $G^{sc}$, then $Y_0$ satisfies strong approximation off $S$. 
\end{prop}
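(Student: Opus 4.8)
The plan is to mimic the structure of the proof of Proposition \ref{add}, but to replace the single-place archimedean approximation input (Corollary \ref{str-ref}) and the appeal to Demarche's theorem by fibration-method arguments that work off an arbitrary non-empty finite set $S$. As in Proposition \ref{add}, by Proposition \ref{chi} it suffices to prove strong approximation off $S$ for the open $H_0$-subvariety $U \subset Y_0$ carrying the smooth morphism $\varsigma_U : U \to V$ onto a standard toric variety $(T_0 \hookrightarrow V)$, whose fibres are non-empty and geometrically integral, and which restricts over $T_0$ to the quotient $\varsigma_{H_0} : H_0 \to T_0$. So fix a non-empty open $W = \prod_{v} W_v \subset U(\mathbf A_k)_{\bullet}$; I must produce a $k$-point of $U$ in the projection of $W$ away from $S$.

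First I would push $W$ down: since $\varsigma_U$ is smooth with non-empty geometrically integral fibres, $\varsigma_U(W)$ is a non-empty open subset of $V(\mathbf A_k)_{\bullet}$. The key base step is a version of Corollary \ref{str-ref} valid off $S$: I need $V(k) \cap (\prod_{v \in S} V(k_v) \times W')$ to be non-empty for any non-empty open $W' \subset V(\mathbf A_k^S)$, i.e.\ strong approximation for the standard toric variety $(T_0 \hookrightarrow V)$ off $S$. This follows from strong approximation off $S$ for $\mathbf A^1$ (Dirichlet / units, exactly as in the proof of Proposition \ref{sign}, but now one simply throws all of $S$ into the excluded set instead of a single real place $v_0$) together with the combinatorial description of standard toric varieties from Lemma 2.11 in \cite{CX}; alternatively one can invoke Proposition 3.6/3.7 of \cite{CX} directly. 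This gives $x \in T_0(k)$ with $\varsigma_U^{-1}(x) \cap W \neq \emptyset$, hence, since $\varsigma_{H_0}^{-1}(x) = \varsigma_U^{-1}(x)\cap H_0$ is open dense in the smooth fibre $\varsigma_U^{-1}(x)$, also $\varsigma_{H_0}^{-1}(x) \cap W \neq \emptyset$. The fibre $\varsigma_{H_0}^{-1}(x)$ is a torsor under $\ker(\varsigma_{H_0})$, which is connected (Lemma \ref{h_0}); up to translating by a rational point of $T_0$ one is reduced to approximation in $\ker(\varsigma_{H_0})$ itself, or more directly one shows that the whole fibre $\varsigma_{H_0}^{-1}(x)$, being a principal homogeneous space under a connected quasi-trivial group (quasi-triviality of $\ker(\varsigma_{H_0})$ comes from $\Pic(H_{0,\bar k}) = 0$ via (6.11.4) of \cite{Sansuc}, as in Lemma \ref{h_0}), has a $k$-point and is $k$-isomorphic to that connected group.

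Thus the problem reduces to: the connected group $\ker(\varsigma_{H_0})$ satisfies strong approximation off $S$. Here I would invoke the structure $1 \to R_u \to \ker(\varsigma_{H_0}) \to G^{sc} \to 1$ (semisimple part $G^{sc}$ by the isomorphisms of Lemma \ref{h_0}, with $R_u$ unipotent); unipotent groups satisfy strong approximation off any non-empty $S$, and $G^{sc}$ satisfies classical strong approximation off $S$ precisely under the non-compactness hypothesis $\prod_{v\in S} G'(k_v)$ not compact for every simple factor $G'$ (Kneser--Platonov). One then transfers strong approximation off $S$ along the extension by tracing the usual diagram of exact sequences of adelic and rational points, using $H^1$ vanishing for the unipotent radical; this is the same bookkeeping as in the proof of Theorem \ref{sha}(2). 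Assembling: starting from a point of $\varsigma_{H_0}^{-1}(x) \cap W$, strong approximation off $S$ for the fibre yields a $k$-point of $\varsigma_{H_0}^{-1}(x) \subset U$ in the projection of $W$ off $S$, as required.

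The main obstacle I anticipate is not any single deep theorem but the careful orchestration of the fibration: one must be sure that "strong approximation off $S$" is genuinely transferred through (i) the smooth surjection $\varsigma_U : U \to V$ with geometrically integral fibres, (ii) the choice of a rational fibre via the base case for $V$, and (iii) the extension $R_u \to \ker(\varsigma_{H_0}) \to G^{sc}$, while keeping track that the Brauer--Manin obstruction has in fact disappeared (as in Proposition \ref{add}, one checks $\Br_1$ of the relevant fibre is constant, so no condition on $\Br(Y_0)$ survives and one obtains unconditional strong approximation). The delicate point compared with Proposition \ref{add} is that $S$ may be purely non-archimedean, so the unit-based argument in Proposition \ref{sign} and the connectedness-of-real-points simplifications used in Demarche's corollary are no longer available; one must instead quote Kneser--Platonov strong approximation for $G^{sc}$ off $S$ directly and handle the toric base by bare strong approximation for $\mathbf A^1$ off $S$.
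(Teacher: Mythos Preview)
Your overall strategy coincides with the paper's: both reduce to $U$ via Proposition~\ref{chi} and run the fibration along $\varsigma_U:U\to V$, using strong approximation off $S$ for the standard toric base $V$ and for $\ker(\varsigma_{H_0})$ (the latter via the extension $1\to R_u\to\ker(\varsigma_{H_0})\to G^{sc}\to 1$ together with Kneser--Platonov). The paper packages this as a verification of the hypotheses of Proposition~3.1 in \cite{CTX1}; you do the same thing by hand.

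There is, however, a genuine gap at the step where you assert that the fibre $\varsigma_{H_0}^{-1}(x)$ has a $k$-point. Neither justification you offer is valid. ``Translating by a rational point of $T_0$'' to reach the identity fibre would require a lift of $x\in T_0(k)$ to $H_0(k)$, which is precisely the surjectivity in question. And ``torsor under a connected quasi-trivial group, hence has a $k$-point'' overstates Theorem~\ref{sha}: quasi-triviality of $\ker(\varsigma_{H_0})$ yields only $H^1(k,\ker(\varsigma_{H_0}))\cong\prod_{v\text{ real}}H^1(k_v,\ker(\varsigma_{H_0}))$, so one still needs $\varsigma_{H_0}^{-1}(x)(k_v)\neq\emptyset$ at every real place. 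Your construction secures this for $v\notin S$ (from $W$) and for non-real $v\in S$ (where $H^1(k_v,\ker(\varsigma_{H_0}))=1$), but if $S$ contains a real place $v_0$ nothing in your argument controls $\varsigma_{H_0}^{-1}(x)(k_{v_0})$. The paper's remedy is exactly Corollary~\ref{str-ref}: one uses the \emph{refined} strong approximation for $V$ to choose $x\in T_0(k)$ lying, at each real $v_0\in S$, in the identity component $T_0(k_{v_0})^+\subset\varsigma_{H_0}(H_0(k_{v_0}))$, which forces $\varsigma_{H_0}^{-1}(x)(k_{v_0})\neq\emptyset$ and hence (by $\Sha^1=1$) triviality of the torsor. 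Ironically, the case you flagged as delicate---$S$ purely non-archimedean---is the one where your argument already goes through; the missing work is needed precisely when $S$ meets $\infty_k$.
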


\begin{proof} 

By Proposition \ref{chi}, there exists a standard toric variety $(T_0\hookrightarrow V)$ and an open $H_0$-subvariety $U\subset Y_0$ such that $H_0\subset U$, $\codim(Y_0\setminus U,Y_0)\geq 2$ and the morphism $H_0\xrightarrow{\varsigma_{H_0}}T_0$ can be extended to a smooth morphism $U\xrightarrow{\varsigma_U} V$  with non-empty and geometrically integral fibres. Then one only needs to show that $U$ satisfies strong approximation off $S$. 

One can verify the condition (i), (ii) and (iii) of Proposition 3.1 in \cite{CTX1} for the fibration $U\xrightarrow{\varsigma_U} V$ with the open subset $T_0$ of $V$. 

For condition (i), we have $V$ satisfies strong approximation off $S$ by Corollary 3.7 in \cite{CX}.  If $S$ contains a real prime $v_0$, we will apply the stronger version of strong approximation off $S$ for $V$ by Corollary \ref{str-ref}. 

For condition (ii), we have $H_0^{ss}\cong G^{sc}$ which is semi-simple and simply connected by Lemma \ref{h_0}. Therefore $\prod_{v\in S} H'(k_v)$ is not compact for any non-trivial simple factor $H'$ of $H_0^{ss}$ by the assumption. Since $R_u(\ker(\varsigma_{H_0}))$ is an affine space,  we have that $H_0^{ss}$ and  $R_u(\ker(\varsigma_{H_0}))$ satisfy strong approximation off $S$ by Theorem 7.12 of Chapter 7 in \cite{PR}. Since $$H^1(k_v, R_u(\ker(\varsigma_{H_0}))=\{1\} $$ for each $v\in S$,   the quotient maps $\ker(\varsigma_{H_0})(k_v) \rightarrow H_0^{ss}(k_v)$ is surjective for each $v\in S$. By Proposition 3.1 in \cite{CTX1}, one concludes that $\ker(\varsigma_{H_0})$ satisfies strong approximation off $S$.

For any $t_0\in T_0(k)$, one knows that the fiber $\varsigma_{H_0}^{-1}(t_0)$ is a $\ker(\varsigma_{H_0})$-torsor over $k$. By Lemma \ref{h_0} and Theorem \ref{sha},  one concludes $\Sha^1(\ker(\varsigma_{H_0}))=\{1\}$.   If $\varsigma_{H_0}^{-1}(t_0)({\bf A}_k)\neq \emptyset, $  then $\ker(\varsigma_{H_0})\cong \varsigma_{H_0}^{-1}(t_0)$ over $k$. Therefore $\varsigma_{H_0}^{-1}(t_0)$ satisfies strong approximation off $S$.

For condition (iii), for any no real prime $v\in S$,  we have $H^1(k_v, \ker(\varsigma_{H_0}))=\{1\}$ by Lemma \ref{h_0} and Theorem \ref{sha}. Therefore $H_0(k_v)\xrightarrow{\varsigma_{H_0}} T_0(k_v)$ is surjective. We only need to consider $S$ contains a real prime $v_0$. Since $H^1(k_{v_0}, \ker(\varsigma_{H_0}))$ is finite by Theorem 6.14 in Chapter 6 of \cite{PR}, one has  $\varsigma_{H_0}(H(k_{v_0}))\supseteq T_0(k_{v_0})^+$ where $T_0(k_{v_0})^+$ is the connected component of identity. We apply the stronger version of strong approximation off $S$ for $V$ by Corollary \ref{str-ref} and obtain $t_0\in T_0(k)$ and $\varsigma_{H_0}^{-1}(t_0)(k_{v})\neq \emptyset$ for all $v\in \Omega_k$. The rest of argument follows from the same as those of Proposition 3.1 in \cite{CTX1}.  \end{proof}

The following lemma provides the computation of Brauer-Manin invariants of points with with group action for algebraic parts.  

\begin{lem} \label{inv+act} Let $G_1$ be a connected linear algebraic group over $k$ and $P$ be a smooth variety with an action
$$ a_P: G_1\times_k P \rightarrow  P$$ over $k$. Suppose $P(k)\neq \emptyset$ and fix $\nu\in P(k)$. Then one has 
$$ \sum_{v\in \Omega_k} \inv_v(\alpha((g_v)\cdot (x_v))) = \sum_{v\in \Omega_k} \inv_v (\iota_{G_1}^*(\alpha)(g_v)) + \sum_{v\in \Omega_k} \inv_v(\alpha (x_v)) $$ for any $(g_v)\in G_1 ({\bf A}_k)$ and $(x_v)\in P({\bf A}_k)$ and $\alpha\in \Br_1(P)$, where 
$$ \iota_{G_1}: G_1 \xrightarrow{id\times \nu} G_1\times_k P \xrightarrow{a_P} P  . $$
\end{lem}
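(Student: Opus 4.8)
The plan is to package the three sums into the value of a single Brauer class on $G_1\times_k P$ and then appeal to the reciprocity law. For $v\in\Omega_k$ and a point $(g_v,x_v)\in(G_1\times_k P)(k_v)$, functoriality of evaluation of Brauer classes gives, in $\Br(k_v)$,
\[ \alpha((g_v)\cdot(x_v))=(a_P^*\alpha)(g_v,x_v),\qquad \iota_{G_1}^*(\alpha)(g_v)=\bigl(pr_1^*\,\iota_{G_1}^*\alpha\bigr)(g_v,x_v),\qquad \alpha(x_v)=\bigl(pr_2^*\alpha\bigr)(g_v,x_v), \]
where $pr_1,pr_2$ are the projections of $G_1\times_k P$ onto $G_1$ and $P$. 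Hence, putting
\[ \beta:=a_P^*\alpha-pr_1^*\bigl(\iota_{G_1}^*\alpha\bigr)-pr_2^*\alpha\in\Br_1(G_1\times_k P), \]
the difference between the two sides of the asserted identity equals $\sum_{v\in\Omega_k}\inv_v(\beta(g_v,x_v))$. It therefore suffices to show that $\beta$ lies in $\Br(k)$: then $\beta(g_v,x_v)$ is the localisation of $\beta$ at $v$, independent of the chosen point, and $\sum_{v\in\Omega_k}\inv_v(\beta)=0$ by the reciprocity law of global class field theory.

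To prove $\beta$ is constant I would first reduce to $P$ geometrically integral ($P$ being smooth with a rational point, we may assume it connected, hence geometrically integral), and then observe that $\beta$ restricts to a constant on both coordinate slices. With $t\colon G_1\to G_1\times_k P$, $g\mapsto(g,\nu)$, one has $a_P\circ t=\iota_{G_1}$, $pr_1\circ t=\id$ and $pr_2\circ t$ the constant map $\nu$, so $t^*\beta=\iota_{G_1}^*\alpha-\iota_{G_1}^*\alpha-\alpha(\nu)\in\Br(k)$; with $s\colon P\to G_1\times_k P$, $p\mapsto(1_{G_1},p)$, one has $a_P\circ s=\id$, $pr_2\circ s=\id$ and $pr_1\circ s$ constant, so $s^*\beta\in\Br(k)$. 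Thus $\beta$ has trivial image in $\Br_a(G_1)$ under $t^*$ and in $\Br_a(P)$ under $s^*$, and the lemma is reduced to the injectivity of $(t^*,s^*)\colon\Br_a(G_1\times_k P)\to\Br_a(G_1)\times\Br_a(P)$.

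This last point is the step I expect to be the main obstacle. I would deduce it from the low‑degree exact sequence attached to the Hochschild–Serre spectral sequence of a smooth geometrically integral $k$‑variety $V$ with $V(k)\neq\emptyset$,
\[ H^2\bigl(k,\bar{k}[V]^\times/\bar{k}^\times\bigr)\longrightarrow\Br_a(V)\longrightarrow H^1\bigl(k,\Pic(V_{\bar{k}})\bigr) \]
(cf.\ Lemma 6.1 in \cite{Sansuc} or Lemma 2.1 in \cite{CTX}), applied to $V=G_1\times_k P$, which has the rational point $(1_{G_1},\nu)$, together with two product decompositions: Rosenlicht's theorem gives $\bar{k}[(G_1\times_k P)_{\bar{k}}]^\times/\bar{k}^\times\cong\bigl(\bar{k}[G_1]^\times/\bar{k}^\times\bigr)\oplus\bigl(\bar{k}[P]^\times/\bar{k}^\times\bigr)$ as $\Gamma_k$‑modules, while $\Pic\bigl((G_1\times_k P)_{\bar{k}}\bigr)\cong\Pic(G_{1,\bar{k}})\oplus\Pic(P_{\bar{k}})$ via $pr_1^*\oplus pr_2^*$. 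For the latter I would use that the connected linear group $G_{1,\bar{k}}$ is a rational variety — it contains an open subvariety isomorphic to a big Bruhat cell $\mathbb{G}_m^{\,r}\times\mathbb{A}^m$, an open subset of $\mathbb{A}^{r+m}$ with complement a union of hyperplanes — so that $\Pic\bigl((G_1\times_k P)_{\bar{k}}\bigr)$ is computed from $\Pic\bigl(\mathbb{A}^{r+m}\times_{\bar{k}}P_{\bar{k}}\bigr)=\Pic(P_{\bar{k}})$ by adjoining the classes of the complementary divisors, all pulled back from $G_{1,\bar{k}}$; this identifies the kernel of restriction to a slice $\{g_0\}\times P$ with $pr_1^*\Pic(G_{1,\bar{k}})$, which is the asserted direct sum. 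Both decompositions are compatible with $t$ and $s$, which induce the projections onto the two summands. Hence, given $\xi\in\Br_a(G_1\times_k P)$ with $t^*\xi=s^*\xi=0$: its image in $H^1(k,\Pic(V_{\bar{k}}))=H^1(k,\Pic(G_{1,\bar{k}}))\oplus H^1(k,\Pic(P_{\bar{k}}))$ is killed by each projection, hence zero; so by exactness $\xi=pr_1^*\eta_1+pr_2^*\eta_2$ with $\eta_1\in\Br_a(G_1)$, $\eta_2\in\Br_a(P)$ coming from $H^2(k,\bar{k}[G_1]^\times/\bar{k}^\times)$ and $H^2(k,\bar{k}[P]^\times/\bar{k}^\times)$ respectively; and then $0=t^*\xi=\eta_1$ and $0=s^*\xi=\eta_2$, so $\xi=0$. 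This gives $\beta\in\Br(k)$ and completes the proof.
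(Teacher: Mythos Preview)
Your argument is correct and follows essentially the same route as the paper: both reduce to the splitting $\Br_1(G_1\times_k P)=pr_1^*\Br_e(G_1)\oplus pr_2^*\Br_1(P)$ (equivalently, the injectivity of your $(t^*,s^*)$), after which the identification of the pieces of $a_P^*\alpha$ via the two slices and reciprocity finish the proof. The only difference is that the paper invokes this splitting as a black box---Sansuc's (6.10.3) in \cite{Sansuc}---whereas you rederive it from Rosenlicht's lemma, the Picard decomposition for a product with a rational variety, and the Hochschild--Serre sequence; your derivation is fine but not needed given the citation.
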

\begin{proof}
By the functoriality of Brauer-Manin pairing, one has 
$$\sum_{v\in \Omega_k} \inv_v(\alpha((g_v)\cdot (x_v))) =\sum_{v\in \Omega_k} \inv_v (a_P^*(\alpha)(g_v, x_v)) . $$ Since both $P$ and $G_1$ have rational points, one has 
$$ \Br_1(G_1\times_k P) =p_{G_1}^* (\Br_e(G_1) \oplus p_P^*(\Br_1(P)) $$ by Sansuc's exact sequence (see (6.10.3) of Proposition 6.10 in \cite{Sansuc}), where $p_{G_1}$ and $p_P$ are the projection of $G_1\times_k P$ to $G_1$ and $P$ respectively and $\Br_1(G_1)=\Br_e(G_1)\oplus \Br(k)$ by using the section $1_G$. The result follows from the functoriality of Brauer-Manin pairing. 
\end{proof}

In order to establish strong approximation off any finite non-empty subset $S$, we need the following descent result other than Proposition \ref{rel-sa}. 

\begin{prop}\label{ssd}
Let $G_1$ be a connected linear algebraic group with $\Sha^1(k, G_1)=\{1\}$ and $P=G_1/M$ where $M$ is a group of multiplicative type over $k$. Let $\pi: G_1\rightarrow P$ be the quotient map. Then 
$$ P({\bf A}_{k})^{\ker(\pi^*)} = \pi (G_1({\bf A}_k)) \cdot P(k) $$ where $\pi^*: \Br_1(P)\rightarrow \Br_a (G_1)$ induced by $\pi$. 
\end{prop}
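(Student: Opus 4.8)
The plan is to prove the two inclusions separately, the hard direction being $P({\mathbf A}_k)^{\ker(\pi^*)} \subseteq \pi(G_1({\mathbf A}_k))\cdot P(k)$. First I would record the easy inclusion: for $x\in G_1(k)$ and $(g_v)\in G_1({\mathbf A}_k)$, functoriality of the Brauer--Manin pairing gives $\sum_v\inv_v(\alpha(\pi(g_v)x)) = \sum_v\inv_v(\pi^*\alpha)((g_v))$ up to the constant contribution of $x$ (which vanishes by the reciprocity law since $\pi^*\alpha$ applied to a rational point sums to zero), so any element of $\pi(G_1({\mathbf A}_k))\cdot P(k)$ is orthogonal to $\ker(\pi^*)$. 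Concretely I would invoke Lemma~\ref{inv+act} with $P = G_1/M$, the action being translation, and $\nu = \pi(1_{G_1})$, so that $\iota_{G_1} = \pi$; this reduces the statement to $\sum_v\inv_v(\pi^*\alpha)((g_v)) = 0$ for $\alpha\in\ker(\pi^*)$, which is immediate.

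For the main inclusion, start with $(x_v)\in P({\mathbf A}_k)^{\ker(\pi^*)}$. Since $\Sha^1(k,G_1) = \{1\}$, the fibration $\pi: G_1\to P$ is surjective on $k$-points where it matters; more precisely, the fiber over any $k$-point of $P$ is an $M$-torsor, and the relevant point is to lift the adelic point $(x_v)$. I would use the exact sequence of pointed sets $G_1(k_v)\to P(k_v)\xrightarrow{\partial_v} H^1(k_v,M)$: each $\partial_v(x_v)$ lies in $H^1(k_v,M)$, and the obstruction to the adelic point $(x_v)$ coming from $\pi(G_1({\mathbf A}_k))\cdot P(k)$ is controlled by a Poitou--Tate type pairing between $\prod_v H^1(k_v,M)$ and $\Br_a(G_1)/\pi^*(\ldots)$, i.e.\ precisely by the condition $\sum_v\inv_v(\pi^*\alpha)(\text{lift of }x_v) = 0$ for all $\alpha$. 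The orthogonality hypothesis $(x_v)\perp\ker(\pi^*)$ together with Poitou--Tate duality for the finite (or multiplicative-type) group $M$ should force the class $(\partial_v(x_v))_v$ to come from a global class in $H^1(k,M)$, which by $\Sha^1(k,G_1)=\{1\}$ — applied to the exact sequence $H^1(k,M)\to H^1(k,G_1)$ — can be modified by a global point of $P$ so that the resulting adelic point lifts to $G_1({\mathbf A}_k)$.

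More carefully, I would argue as follows: choose any lift $g_v\in G_1(k_v)$ of $x_v$ where possible; the local lifting obstruction is $\partial_v(x_v)\in H^1(k_v,M)$. Dualizing, $H^1(k_v,M)$ pairs with $H^1(k_v,\hat M)$ (or the relevant dual), and the hypothesis that $(x_v)$ is orthogonal to $\ker(\pi^*)$ translates, via the compatibility of $\partial$ with the Brauer-group boundary map from $\Pic$ / character data, into the statement that $(\partial_v(x_v))_v$ is orthogonal to the image of the global dual group in $\prod_v H^1(k_v,\hat M)$. By the Poitou--Tate exact sequence for $M$, such an adelic cohomology class lies in the image of $H^1(k,M) \to \prod_v' H^1(k_v,M)$ modulo the localizations — i.e.\ there is a global class $\xi\in H^1(k,M)$ with $\xi_v = \partial_v(x_v)$ for all $v$ (here one uses that $P$ has $k$-points, namely $\pi(1_{G_1})$, so the relevant $\Sha$ group of $M$ injects appropriately). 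The class $\xi$ maps to $H^1(k,G_1)$, and since it is everywhere locally trivial (the $x_v$ lift to $G_1(k_v)$ after a translation by a global point, by construction $\xi_v$ kills the obstruction), its image lies in $\Sha^1(k,G_1) = \{1\}$. Hence $\xi$ comes from $P(k)$: there is $p\in P(k)$ with $\partial(p) = \xi$, so $\partial_v(p^{-1}x_v) = 0$ for all $v$, meaning $p^{-1}(x_v)$ lifts to $G_1({\mathbf A}_k)$. Thus $(x_v) = p\cdot\pi((g_v))$ for suitable $(g_v)\in G_1({\mathbf A}_k)$, as desired.

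The main obstacle is making the duality step rigorous: one must identify the pairing between local points of $P = G_1/M$ and $\ker(\pi^*)\subseteq\Br_a(G_1)$ with the Poitou--Tate pairing on $M$, and then invoke the exactness of the Poitou--Tate sequence to globalize the local obstruction classes. This requires a careful bookkeeping of how $\pi^*$ on Brauer groups relates to the boundary map $\partial$ on $H^1$ with coefficients in $M$ — essentially the content of Sansuc's analysis of $\Br_a$ of homogeneous spaces — but no genuinely new ingredient beyond $\Sha^1(k,G_1) = \{1\}$ and the finiteness/duality for groups of multiplicative type. I would also need to double-check that the everywhere-local triviality of $\xi$ as a $G_1$-torsor class follows from, rather than is assumed in, the construction; this is where the hypothesis $P(k)\neq\emptyset$ (giving a base point) and the connectedness of $G_1$ enter.
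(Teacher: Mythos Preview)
Your proposal is correct and follows essentially the same route as the paper: the easy inclusion via Lemma~\ref{inv+act}, and the hard inclusion by pushing $(x_v)$ through the boundary $\partial_v\colon P(k_v)\to H^1(k_v,M)$, invoking Poitou--Tate to globalize, and then using $\Sha^1(k,G_1)=\{1\}$ to descend to $P(k)$. The ``main obstacle'' you flag---identifying the pairing of $P({\mathbf A}_k)$ against $\ker(\pi^*)$ with the Poitou--Tate pairing for $M$---is exactly what the paper resolves, and it does so by introducing an explicit map $\theta(G_1)\colon H^1(k,M^*)\to\Br_1(P)$ given by cup product with the class $[G_1]\in H^1_\et(P,M)$, showing $\im\theta(G_1)\subseteq\ker(\pi^*)$, and then citing a ready-made compatibility diagram (Proposition~2.7 of \cite{CTX} together with the Poitou--Tate sequence for complexes of tori from \cite{D0}) rather than reproving it; the sandwich $P({\mathbf A}_k)^{\ker(\pi^*)}\subseteq P({\mathbf A}_k)^{\im\theta(G_1)}=\pi(G_1({\mathbf A}_k))\cdot P(k)$ then finishes.
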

\begin{proof} When $M$ is a group of multiplicative type, there is a map 
$$ \theta(G_1):  \ H^1(k, M^*) \rightarrow \Br_1(P)$$ obtained by restricting the cup product $H^1(k, M^*)\times H^1_{et}(P, M) \rightarrow \Br_1(P)$ to $G$ (see line -3 of P.316 in \cite{CTX}). Since the following digram of the cup product
\[ \begin{CD}
H^1(k, M^*)\times H^1_{et}(P, M)  @>>> \Br_1(P)  \\
 @V{id\times \pi^*}VV  @VV{\pi^*}V  \\
 H^1(k, M^*) \times H_{et}^1(G_1, M) @>>> \Br_1(G_1)
  \end{CD} \] commutes and $G_1$ as a torsor over $P$ under $M$ becomes a trivial torsor over $G_1$ under $M$, one concludes that $$\im(\theta(G_1))\subseteq \ker(\pi^*) . $$

By Proposition 2.7 in \cite{CTX}, one has the following commutative diagram
\[ \begin{CD}
G_1(k) @>>> P(k) @>>> H^1 (k, M) @>>> H^1(k,G_1)  \\
 @VVV  @VVV @VVV @VVV   \\
 G_1({\bf A}_k) @>>> P({\bf A}_k) @>>> \prod_{v}' H^1(k_v, M) @>>> \prod_v' H^1(k_v, G_1) \\
 @. @VVV  @VVV @. \\
@.  \Br_1(P)^D @>{\theta(G_1)^D}>> H^1(k, M^*)^D  @. 
  \end{CD} \]
where $(-)^D:=\hom(-,\Bbb Q/\Bbb Z)$ such that the rows are exact by Proposition 36 in \S 5.4, Chapter I  of \cite{Ser} and the third column is exact by Theorem 6.3 in \cite{D0}. Since $\Sha^1(k, G_1)=\{1\}$,  one concludes that 
$$ P({\bf A}_k)^{Im(\theta(G_1))} = \pi (G_1({\bf A}_k)) \cdot P(k) $$ by the above diagram and Corollary 1 of P.50 in \cite{Ser}.
This implies that 
$$  \pi (G_1({\bf A}_k)) \cdot P(k) \subseteq  P({\bf A}_k)^{\ker(\pi^*)}\subseteq P({\bf A}_k)^{Im(\theta(G_1))} = \pi (G_1({\bf A}_k)) \cdot P(k) $$  as required by Lemma \ref{inv+act}.   
\end{proof}

The main result of this section is the following theorem.

\begin{thm} \label{arch-main} Let $X$ be a smooth $G$-groupic variety over a number field $k$ and $S$ be a non-empty finite subset of $\Omega_k$ of $k$. If $\bar{k}[X]^\times =\bar k^\times$ and $\prod_{v\in S} G'(k_v)$ is not compact for any non-trivial simple factor $G'$ of $G^{sc}$, then $X$ satisfies strong approximation with respect to $\Br_1(X)$ off $S$.
\end{thm}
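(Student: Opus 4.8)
The plan is to follow the structure of the proof of Theorem \ref{main}, but to use the extra hypothesis $\bar k[X]^\times = \bar k^\times$ to eliminate the torus $T_1$ and to replace the appeal to Demarche's result on connected linear groups (Corollary 3.20 in \cite{D}) by the descent result Proposition \ref{ssd} together with strong approximation off $S$ for $H_0$, which now works at an arbitrary non-empty $S$ by Proposition \ref{refine-add}. Concretely, I would again form the pull-back $Y = Z\times_{X^c} X$ of a universal torsor of a smooth compactification $X^c$, so that $\rho_X\colon Y\to X$ is a torsor under $T$, $H = \rho_X^{-1}(G)$ is quasi-trivial, and $Y$ is an $H$-groupic variety by Lemma \ref{a_Y}. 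Under $\bar k[X]^\times = \bar k^\times$, Lemma \ref{constant} gives that $H_0\xrightarrow{\rho_G} G$ is already surjective with kernel $M$ a group of multiplicative type, i.e. $G = H_0/M$; this is the key simplification, and it lets me work directly with $H_0$ and $Y_0$ without carrying $T_1$ through the argument.

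The main steps, in order: (1) Choose a finite $S_1\supseteq S$ and integral models $\mathcal X, \mathcal G, \mathcal Y_0, \mathcal H_0$ as in conditions (i)--(iv) preceding Lemma \ref{loc-int}, so that Lemma \ref{pt}, Lemma \ref{loc-int} and the local approximation statement Proposition \ref{local} are all available (their proofs use only density of the group in the groupic variety, finiteness of $H^1(k_v,T)$, and $\Br_a(Y_0)=0$ from Proposition \ref{y_0}, none of which is affected by working off $S$ instead of off $\infty_k$). (2) Given an open $W = \prod_v W_v \subseteq X(\B A_k^S)$ and an adelic point $(x_v)\in W\cap X(\B A_k)^{\Br_1 X}$ (pulled back from $\B A_k^S$ and extended arbitrarily at $S$), use Proposition \ref{local} at the finitely many bad places to replace $x_v$ by a point of $G(k_v)$ with the same local invariants for all $\xi\in\Br_1(X)$, and use Lemma \ref{loc-int} at the good places to write $x_v = g_v\cdot\rho_{\mathcal X}(\beta_v)$ with $g_v\in\mathcal G(O_v)$, $\beta_v\in H_0(k_v)\cap\mathcal Y_0(O_v)$. (3) As in the proof of Theorem \ref{main}, Proposition \ref{y_0} ($\Br_a(Y_0)=0$) forces $\inv_v(\xi(x_v)) = \inv_v(\xi(g_v))$ for all $v$ and all $\xi\in\Br_1(X)$, so the adele $(g_v)\in G(\B A_k)$ is orthogonal to $\Br_1(X)$. (4) Using Proposition \ref{br}, which identifies $\Br_1(X)$ with $\ker(\Br_1(G)\to\Br_a(H_0))$, translate this into the statement that the image of $(g_v)$ in $P(\B A_k) = (H_0/M)(\B A_k) = G(\B A_k)$ lies in $P(\B A_k)^{\ker(\pi^*)}$, where $\pi\colon H_0\to G$ is the quotient map; then apply Proposition \ref{ssd} (using $\Sha^1(k,H_0)=\{1\}$, which holds because $H_0$ is quasi-trivial by Lemma \ref{h_0} and Theorem \ref{sha}) to find $g\in G(k)$ and an adele $y_{\B A}\in H_0(\B A_k)$ with $\pi(y_{\B A})\cdot g$ lying in the prescribed open set $g_0^{-1}\cdot W$-type neighbourhood; equivalently $y_{\B A}\in\rho_X^{-1}(g^{-1}\cdot W)$, an open subset of $Y_0(\B A_k)$. (5) Finally invoke Proposition \ref{refine-add}: since $\prod_{v\in S}G'(k_v)$ is non-compact for every non-trivial simple factor $G'$ of $G^{sc}\cong H_0^{sc}$ (Lemma \ref{h_0}), $Y_0$ satisfies strong approximation off $S$, so there is $y\in Y_0(k)\cap\rho_X^{-1}(g^{-1}\cdot W)$, and $g\cdot\rho_X(y)\in W\cap X(k)$ as required.

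The step I expect to be the main obstacle is step (4): matching the two ``descent'' inputs so that the invariants line up correctly. Specifically, one must check that the open set one feeds into Proposition \ref{ssd} is genuinely the image under $\pi=\rho_G$ of the relevant integral/adelic data, that the Brauer-orthogonality of $(g_v)$ with respect to $\Br_1(X)$ really does say orthogonality with respect to $\ker(\pi^*)\subseteq\Br_1(P)$ under the identification of Proposition \ref{br} (this is where one needs the compatibility of the map $\Br_1(G)\to\Br_a(H_0)$ in Proposition \ref{br} with the map $\theta(G_1)$ appearing in Proposition \ref{ssd}, via Lemma \ref{inv+act}), and that the places in $S$ are handled—at places of $S$ we impose no condition, and surjectivity of $\pi$ at such places (needed to start the argument) follows from finiteness of $H^1(k_v,M)$ together with the adjustment already used in Proposition \ref{refine-add}. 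Everything else is a direct transcription of the proof of Theorem \ref{main} with ``off $\infty_k$'' replaced by ``off $S$'', using Proposition \ref{refine-add} in place of Proposition \ref{add}.
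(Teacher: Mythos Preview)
Your proposal is correct and follows essentially the same approach as the paper's own proof: reduce to an adelic point $(g_v)\in G(\mathbf A_k)^{\ker(\rho_G^*)}$ via Proposition~\ref{local}, Lemma~\ref{loc-int}, Proposition~\ref{y_0} and Proposition~\ref{br}, then apply the descent result Proposition~\ref{ssd} (enabled by Lemma~\ref{constant}, Lemma~\ref{h_0} and Theorem~\ref{sha}) in place of Proposition~\ref{rel-sa}, and finish with Proposition~\ref{refine-add} in place of Proposition~\ref{add}. Your anticipated obstacle in step~(4) is exactly the content of the line ``$(g_v)_{v\in\Omega_k}\in W\cap G(\mathbf A_k)^{\ker(\rho_G^*)}$ by Proposition~\ref{br}'' in the paper, and no extra compatibility beyond Proposition~\ref{br} is needed there.
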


\begin{proof} Let $W=\prod_{v\in S}X(k_v) \times \prod_{v\not\in S} W_v$ be an open subset of $X(\mathbf A_k)$ such that  $$  (x_v)_{v\in \Omega_k} \in W\cap X(\mathbf A_k)^{\Br_1 X} . $$  Then there exists a sufficiently large finite subset $S_1$ of $\Omega_k$ containing $S\cup \infty_k$ such that i), ii), iii) and iv) before Lemma \ref{loc-int} holds.  By Proposition \ref{local}, one can assume that $x_v\in G(k_v)$ for all $v\in \Omega_k$. Then  $$x_v\in W_v \cap G(k_v)={\mathcal X}(O_v)\cap G(k_v) ={\mathcal G}(O_v)\cdot \rho_{\mathcal{X}}(H_0(k_v)\cap \mathcal{Y}_0(O_v))$$ for $v\not\in S_1$ by Lemma \ref{loc-int}. There are $$g_v\in {\mathcal G}(O_v) \ \ \ \text{and} \ \ \ \beta_v\in {\mathcal Y}_0(O_v)\cap H_0(k_v)$$ such that $x_v=g_v \cdot \rho_{\mathcal X}(\beta_v)$ for all $v\not\in S_1$ and $g_v=x_v$ for $v\in S_1$. Then $(g_v)_{v\in \Omega_k}\in G(\mathbf A_k)$. By Proposition \ref{y_0}, one has
$$ \inv_v(\xi(x_v))= \inv_v(\xi(g_v\cdot \rho_X(\beta_v)))=\inv_v((\rho_X^* g_v^* \xi)(\beta_v))=\inv_v((\rho_X^* g_v^* \xi)(1_{H_0}))= \inv_v(\xi(g_v))$$  for all $\xi\in \Br_1(X)$ and $v\in \Omega_k$. This implies that $$(g_v)_{v\in \Omega_k}\in W \cap G(\mathbf A_k)^{\ker(\rho_G^*)}$$ by Proposition \ref{br}.  

By Lemma \ref{constant}, we know $H_0\xrightarrow{\rho_G} G$ is surjective and its kernel is a group of multiplicative type. By Lemma \ref{h_0} and Theorem \ref{sha}, $\Sha^1(k, H_0)=\{1\}$. 
Applying Proposition \ref{ssd} to the quotient map $H_0\xrightarrow{\rho_G} G$, one has $g\in G(k)$ and $y_\mathbf A\in H_0(\mathbf A_k)$ such that $$g\rho_X (y_\mathbf A)=(g_v)_{v\in\Omega_k}  \in W \ \ \ \text{and} \ \ \ 
y_{\mathbf A} \in  [\prod_{v\in S} Y_0(k_v)  \times \prod_{v\not \in S}  \rho_X^{-1}( g^{-1} \cdot W_v)]\cap Y_0({\bf A}_k) . $$ 
Therefore one obtains $$y\in Y_0(k) \cap [\prod_{v\in S} Y_0(k_v) \times \prod_{v\not\in S} \rho_{X}^{-1}(g^{-1} \cdot W_v)]$$
by Proposition \ref{refine-add}. This implies $g \cdot \rho_{X} (y) \in  W $ as desired. \end{proof}

\section{Appendix}
When $X$ is a sub-variety of an affine space, then $X(k)$ is discrete in $X(\Bbb A_k)$ by the product formula.  Then non-compactness of $\prod_{v\in S}X(k_v)$ is a necessary condition for $X$ satisfying classical strong approximation off $S$. If $\Br(X)/\Br(k)$ is finite, such compactness is still a necessary condition for $X$ satisfying strong approximation with Brauer-Manin obstruction. However, this is no longer true when $\Br(X)/\Br(k)$ is not finite. For example, a torus $T$ is always satisfying strong approximation with Brauer-Manin obstruction off $\infty_k$ by Theorem 2 in \cite{Ha08} whenever $T(k_\infty)$ is compact or not. Semi-simple linear algebraic groups have quite different feature from tori for strong approximation with Brauer-Manin obstruction off $S$ even though $\Br(G)/\Br(k)$ is not finite either when $G$ is not simply connected.

\begin{prop} \label{sass} Let $G$ be a connected semi-simple linear algebraic group over $k$ and $S$ be a non-empty finite subset of $\Omega_k$. Then $G$ satisfies strong approximation with respect to $\Br_1(G)$ (or $\Br(G)$) off $S$ if and only if $\prod_{v\in S}G'(k_v)$ is not compact for any non-trivial simple factor $G'$ of $G^{sc}$.
\end{prop}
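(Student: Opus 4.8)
The plan is to get the ``if'' direction straight out of Theorem \ref{arch-main}, and to reduce the ``only if'' direction to the classical failure of strong approximation for the simply connected cover $G^{sc}$, where the Brauer--Manin obstruction is vacuous.

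For the ``if'' direction, suppose $\prod_{v\in S}G'(k_v)$ is not compact for every non-trivial simple factor $G'$ of $G^{sc}$. Since $G$ is semi-simple its character group vanishes, so $\bar k[G]^\times=\bar k^\times$, and $G$, acting on itself by left translation, is a smooth $G$-groupic variety. Theorem \ref{arch-main} then gives strong approximation for $G$ with respect to $\Br_1(G)$ off $S$; since $G(k)\subseteq G(\mathbf A_k)^{\Br(G)}\subseteq G(\mathbf A_k)^{\Br_1(G)}$, the same holds with respect to $\Br(G)$.

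For the ``only if'' direction I would argue by contraposition. Assume $\prod_{v\in S}G'(k_v)$ is compact for some non-trivial simple factor $G'$ of $G^{sc}$, and let $\pi\colon G^{sc}\to G$ be the universal covering, with kernel $\mu$, a finite central $k$-group of multiplicative type. Two facts move the problem onto $G^{sc}$. First, $G^{sc}$ being semi-simple and simply connected, $\Pic(G^{sc}_{\bar k})=0$, $\bar k[G^{sc}]^\times=\bar k^\times$ and $\Br(G^{sc}_{\bar k})=0$, so $\Br(G^{sc})=\Br(k)$; by functoriality of the Brauer--Manin pairing and the reciprocity law this forces $\pi(G^{sc}(\mathbf A_k))\subseteq G(\mathbf A_k)^{\Br(G)}$. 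Second, $\Sha^1(k,G^{sc})=\{1\}$ by the Hasse principle for simply connected groups, so Proposition \ref{ssd} applies to $\pi$ — here $\pi^*\colon\Br_1(G)\to\Br_a(G^{sc})=0$, hence $\ker(\pi^*)=\Br_1(G)$ — and yields $$G(\mathbf A_k)^{\Br_1(G)}=\pi(G^{sc}(\mathbf A_k))\cdot G(k).$$ Projecting off $S$, strong approximation for $G$ with respect to $\Br_1(G)$ off $S$ becomes density of $G(k)$ in $\pi(G^{sc}(\mathbf A_k^S))\cdot pr^S(G(k))$, which (after using that the relevant Shafarevich-type subgroup of $H^1(k,\mu)$ is finite, by Hermite--Minkowski, and that $\pi$ is finite étale hence proper on adelic points) reduces to density of $G^{sc}(k)\cdot\mu(\mathbf A_k^S)$ in $G^{sc}(\mathbf A_k^S)$ up to finitely many cosets.

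It then remains to rule out this density when $\prod_{v\in S}G'(k_v)$ is compact. Projecting to the factor $G'$, the density would force $G'(k)\cdot Z'(\mathbf A_k^S)$ — with $Z'$ the image of $\mu$ in the centre of $G'$ — to be dense in $G'(\mathbf A_k^S)$; but by Platonov's strong approximation theorem (Theorem 7.12 of Chapter 7 in \cite{PR}) $G'(k)$ is not dense in $G'(\mathbf A_k^S)$, and more precisely the number of double cosets $G'(k)Z'(\mathbf A_k^S)\backslash G'(\mathbf A_k^S)/K$ is unbounded as the open compact subgroup $K$ shrinks, because the class numbers of the $S$-anisotropic group $G'$ outgrow any power of the fixed finite group $\mu$. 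This quantitative comparison — showing that the finite central isogeny $\pi$ cannot absorb the classical obstruction for $G^{sc}$ — is the step I expect to be the main obstacle; the rest is bookkeeping with the exact sequences above. Once it is in place, a witnessing adelic point can be taken in $\pi(G^{sc}(\mathbf A_k))\subseteq G(\mathbf A_k)^{\Br(G)}\subseteq G(\mathbf A_k)^{\Br_1(G)}$, so $G$ satisfies strong approximation off $S$ with respect to neither $\Br_1(G)$ nor $\Br(G)$.
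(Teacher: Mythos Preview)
Your ($\Leftarrow$) direction is correct; invoking Theorem~\ref{arch-main} is heavier than needed, but it works. The paper instead uses the same descent relation
\[
G(\mathbf A_k)^{\Br(G)}=G(\mathbf A_k)^{\Br_1(G)}=G(k)\cdot\pi(G^{sc}(\mathbf A_k))
\]
(which you also derive via Proposition~\ref{ssd}) and then applies classical strong approximation for $G^{sc}$ directly.

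For ($\Rightarrow$) your reduction to $G^{sc}$ is sound up to the point where you must show that finitely many cosets of $G'(k)\cdot Z'(\mathbf A_k^S)$ cannot be dense in $G'(\mathbf A_k^S)$. Your proposed justification --- that ``class numbers of the $S$-anisotropic group $G'$ outgrow any power of the fixed finite group $\mu$'' --- is not a proof, and you flag it yourself as the main obstacle. It can be rescued, but not by class-number asymptotics: since $\prod_{v\in S}G'(k_v)$ is compact, the projection $G'(\mathbf A_k)\to G'(\mathbf A_k^S)$ is proper, so $G'(k)$ stays discrete in $G'(\mathbf A_k^S)$; intersecting with a compact open $K$ leaves only finitely many rational points, and a finite set times the measure-zero compact group $Z'(\mathbf A_k^S)$ cannot fill $K$. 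That said, this detour through $G^{sc}$ and the bookkeeping with $\mu$ and the $\Sha$-type finiteness is unnecessary.

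The paper's argument is both shorter and avoids your obstacle entirely by going \emph{down} to $G^{ad}$ rather than up to $G^{sc}$. From the descent relation one deduces $G^{ad}(\mathbf A_k)^{\Br(G^{ad})}=G^{ad}(k)\cdot\pi_G^d(G(\mathbf A_k)^{\Br(G)})$, so strong approximation for $G$ implies it for $G^{ad}$; since $G^{ad}$ is a product of its $k$-simple factors, one reduces to $G$ absolutely simple. Then the compactness hypothesis on $G^{sc}(k_S)$ transfers to $G(k_S)$ (finite cokernel), and one builds a compact open adelic neighbourhood contained in $\pi(G^{sc}(\mathbf A_k))\subseteq G(\mathbf A_k)^{\Br(G)}$; discreteness of $G(k)$ in $G(\mathbf A_k)$ forces only finitely many rational points in it, and deleting them at one non-archimedean place outside $S$ produces a Brauer-unobstructed open set with no rational point. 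This is the missing idea you should use: the discreteness/compactness contradiction lives already on $G$, with no need to control how much the centre can absorb.
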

 
\begin{proof} 

($\Leftarrow$) Since there is an isogeny 
$$ \pi_G^c: \ G^{sc} \rightarrow G$$ where $G^{sc}$ is a simply connected covering of $G$ over $k$, one has that $\prod_{v\in S}(G^{sc})'(k_v)$ is not compact for any  non-trivial simple factor $(G^{sc})'$ of $G^{sc}$ and the following descent relation 
\begin{equation} \label{d1}  G({\bf A}_k)^{\Br(G)}=G({\bf A}_k)^{\Br_1(G)}=G(k) \cdot \pi_G^c(G^{sc}({\bf A}_k))  \end{equation}
by Theorem 4.3 and Proposition 2.12 and Proposition 2.6 in \cite{CTX} and the functoriality of Braurer-Manin pairing. The result follows from strong approximation for semi-simple simply connected groups off $S$ (see Theorem 7.12 of \S 7.4 Chapter 7 in \cite{PR}).

($\Rightarrow$) Since there is another isogeny $$ \pi_G^d: \ G \rightarrow G^{ad} $$ where $G^{ad}$ is the adjoint group of $G$ over $k$, one obtains 
$$  G^{ad}({\bf A}_k)^{\Br(G^{ad})}=G^{ad}({\bf A}_k)^{\Br_1(G^{ad})}= G^{ad}(k)\cdot [ (\pi_G^{d}\circ \pi_G^{c})(G^{sc}({\bf A}_k))] $$
by (\ref{d1}). Therefore 
$$  G^{ad}({\bf A}_k)^{\Br(G^{ad})} \subseteq G^{ad}(k)\cdot (\pi_G^{d}[ G(k)\cdot \pi_G^{c} (G^{sc}({\bf A}_k))])=G^{ad}(k)\cdot \pi_G^d(G({\bf A}_k)^{\Br(G)}) $$ by (\ref{d1}). On the other hand, it is clear that the right hand side is contained in the left hand side by the functoriality of Brauer-Manin pairing. One obtains the third descent relation
\begin{equation} \label{d3}
G^{ad}({\bf A}_k)^{\Br(G^{ad})} =G^{ad}(k)\cdot \pi_G^d(G({\bf A}_k)^{\Br(G)}) . \end{equation}
 By the assumption that $G$ satisfies strong approximation with respect to $\Br(G)$, one gets that $G^{ad}$ satisfies strong approximation with respect to $\Br(G^{ad})$ by (\ref{d3}).  It is well-known that $G^{ad}$ is a product of simple subgroups of $G^{ad}$ over $k$  (see (1.4.10) Proposition in \cite{Margulis}). By Proposition 3.2 in \cite{LX} (or the same proof replacing $S$ by a non-empty finite subset of $\Omega_k$), one concludes that each simple factor of $G^{ad}$ satisfies strong approximation of the same type. Therefore one can further assume that $G$ is simple.

For any $v\in \Omega_k$, one has the long exact sequence 
$$ 1\rightarrow \ker(\pi_G^c)(k_v)\rightarrow G^{sc}(k_v) \rightarrow G(k_v) \rightarrow H^1(k_v, \ker(\pi_G^c))  $$ by Galois cohomology. 
Since $H^1(k_v, \ker(\pi_G^c))$ is finite by (7.2.6) Theorem in Chapter VII of \cite{NSW} and $\pi_G^c$ is proper, one obtains that $\pi_G^c(G^{sc}(k_v))$ is an open subgroup of $G(k_v)$ for any $v\in \Omega_k$.

Let $T$ be a finite subset of $\Omega_k$ containing $\infty_k$ and $S$ with $T\setminus (\infty_k\cup S)\neq \emptyset$ such that $\pi_G^c$ is extended to an isogeny 
$$\pi_G^c: \ {\bf G}^{sc}\rightarrow {\bf G}$$ of smooth group schemes of finite type over $O_{k,T}$ (see Definition 4 of  \S 7.3 Chapter 7 in \cite{BLR}). For any $v\in T\setminus S$, we choose a non-empty open subset $U_v$ of $G(k_v)$ such that the topological closure $\overline{U}_v$ of $U_v$ is compact and $\overline{U}_v \subset \pi_G^c(G^{sc}(k_v))$.  

If $\prod_{v\in S}G(k_v)$ is compact, then 
$$ \prod_{v\in S}G(k_v)\times \prod_{v\in T\setminus S} \overline{U}_v \times \prod_{v\not\in T} {\bf G}(O_v) $$ is compact in $G({\bf A}_k)$ with $$[\prod_{v\in S} G(k_v)\times \prod_{v\in T\setminus S} U_v \times \prod_{v\not\in T} {\bf G}(O_v) ] \cap G({\bf A}_k)^{\Br(G)}\neq \emptyset $$ by the functoriality of Brauer-Manin pairing.  This implies that $$G(k)\cap [\prod_{v\in S} G(k_v)\times \prod_{v\in T\setminus S} U_v \times \prod_{v\not\in T} {\bf G}(O_v) ] $$ is finite.  Let $x_1, \cdots, x_n$ be all elements in the above finite set. Choose $v_0\in T\setminus S$ and set $$W_{v_0}=U_{v_0}\setminus \{x_1, \cdots, x_n \}. $$ Then the smaller open subset
$$ C= \prod_{v\in S} G(k_v)\times  W_{v_0}\times \prod_{v\in T\setminus (S\cup \{v_0\})} U_v \times \prod_{v\not\in T} {\bf G}(O_v) $$ satisfies that $$C\cap  G(k)= \emptyset \ \ \ \text{but} \ \ \  C\cap G({\bf A}_k)^{\Br_1(G)}\neq \emptyset $$ by the functoriality of Brauer-Manin pairing. This contradicts that $G$ satisfies strong approximation with Brauer-Manin obstruction off $S$. \end{proof}

\bigskip

\noindent{\bf Acknowledgements.}
Special thanks are due to J.-L. Colliot-Th\'el\`ene who suggested significant improvement from the original version. We would like to thank Cyril Demarche, Qing Liu and Philippe Gille for useful suggestion. The first author acknowledges the support of the French Agence Nationale de la Recherche (ANR)
 under reference ANR-12-BL01-0005, and the second author acknowledges the support of  NSFC grant no.11471219.

\begin{bibdiv}

\begin{biblist}


 \bib{Bo} {article} {
    author={Borovoi, M.}
    title={The Brauer-Manin obstructions for homogeneous spaces with connected or abelian stabilizer},
    journal={J. reine angew. Math.},
    volume={473},
    date={1996},
    pages={181-194},
 }

\bib{BLR}{book}{
title={N\'eron models},
author={Bosch, S.},
author={L\"utkebohmert, W.},
author={Raynaud, M.},
publisher={Spring-Verlag},
series={Ergebnisse der Math. {\bf 3}},
volume={21},
date={1990}
}

 \bib{BD} {article} {
    author={Borovoi, M.},
    author={Demarche, C.},
    title={Manin obstruction to strong approximation for homogeneous spaces},
    journal={Comment. Math. Helv.},
    volume={88},
    date={2013},
    pages={1-54},
 }

\bib{BKG}{article}{
 author={Borovoi, M.}
author={Kunyavskii,B.}
author={Gille, P.}
title={Arithmetical birational invariants of linear algebraic groups over two-dimensional geometric fields}
journal={J. Algebra}
volume={276}
date={2004}
pages={291-339}
}

\bib{CX} {article} {
    author={Cao, Y.},
    author={Xu, F.}
    title={Strong approximation with Brauer-Manin obstruction for toric varieties},
    journal={arXiv:1311.7655},
    volume={},
    date={2014},
    Pages={},
}



\bib{CT08}{article}{
author={Colliot-Th\'el\`ene, J.-L.},
title={R\'esolutions flasques des groupes lin\'eaires connexes}
journal={ J. reine angew. Math.}
volume={618}
date={2008}
pages={77-133}
}

 \bib{CTH} {article} {
    author={Colliot-Th\'el\`ene, J.-L.},
    author={Harari, D.},
    title={Approximation forte en famille},
    journal={to appear in J. reine angew. Math.},
    volume={},
    date={},
    Pages={},
   }



\bib{CTS87} {article} {
    author={Colliot-Th\'el\`ene, J.-L.},
    author={Sansuc, J.-J.},
    title={La descente sur les vari\'et\'es rationnelles, II,},
    journal={Duke Math. J.},
    volume={54},
    date={1987},
    Pages={375-492},
}


 \bib{CTX} {article} {
    author={J.-L. Colliot-Th\'el\`ene},
    author={F. Xu},
    title={Brauer-Manin obstruction for integral points of homogeneous spaces and
         representations by integral quadratic forms},
    journal={Compositio Math.},
    volume={145},
    date={2009},
    Pages={309-363},
   }

\bib{CTX1} {article} {
    author={J.-L. Colliot-Th\'el\`ene},
    author={F. Xu},
    title={Strong approximation for the total space of certain quadric fibrations},
    journal={Acta Arithmetica},
    volume={157},
    date={2013},
    Pages={169-199},
 }

\bib{D0} {article} {
    author={Demarche, C.},
    title={Suites de Poitou-Tate pour les complexes de tores \`a  deux termes},
    journal={Int. Math. Res. Not.},
    volume={},
    date={2011},
    Pages={135-174},
}

\bib{D} {article} {
    author={Demarche, C.},
    title={Le d\'efaut d'approximation forte dans les groupes lin\'eaires connexes},
    journal={Proc.London Math.Soc.},
    volume={102},
    date={2011},
    Pages={563-597},
}

\bib{Eichler1} {article} {
    author={Eichler, M.},
    title={Allgemeine Kongruenzklassenteilungen der Ideal einfacher Algebren \"uber algebraischen Zahlk\"orpern und ihre L-Reihen},
    journal={J. reine und angew. Math.},
    volume={179},
    date={1938},
    Pages={227-251},
}

\bib{Eichler2}{book} {
author={Eichler, M.}
title={Quadratische Formen und orthogonale Gruppen}
series={}
publisher={Springer}
place={Berlin}
date={1952}
}

 \bib {Ful}{book}{
    author={Fulton, W.},
     title={Intersection Theory},
    publisher={Springer-Verlag},
    place={},
    journal={},
    series={},
   volume={},
    date={1984},
   number={ },
    pages={},
  }

\bib{GM}{article}{
author={Gille, P.}
author={Moret-Baily, L.}
title={Actions alg\'ebrique des groupes arithm\'etiques}
journal={ LMS Lecture Note }
volume={405}
date={ 2013}
pages={231-249}
}

\bib{G}{book} {
author={Grothendieck, A.}
title={Le groupe de Brauer, I,II,III}
series={Dix expos\'es sur la cohomologie des sch\'emas}
publisher={North-Holland}
date={1968}
}



  \bib {Ha08}{article} {
    author={D. Harari},
    title={Le d\'efaut d'approximation forte pour les groupes alg\'ebriques commutatifs},
    journal={Algebra and Number Theory},
    volume={2},
    date={2008},
    pages={595-611},
    }

\bib{HaSk}{article}{
author={Harari, D.}
author={Skorobogatov, A.N.}
title={Descent theory for open varieties}
journal={London Mathematical Society Lecture Note Series}
volume={405}
date={2013}
pages={250-279}
number={}
}

\bib{Hartshorne}{book}{
    author={Hartshorne, R.},
     title={Algebraic Geometry},
     publisher={Springer-Verlag},
     place={},
      journal={},
     series={GTM},
    volume={52},
    date={1977},
    number={ },
     pages={},
}

\bib{Kneser}{article}{
   author={Kneser, M.},
  title={Starke approximation in algebraischer Gruppen I},
  journal={J. reine und angew. Math.},
  volume={218},
  date={1965},
   pages={190-203},
    number={}
  }


\bib{LX}{article}{
   author={Liu, Q.},
   author={Xu, F.},
  title={Very strong approximation for certain algebraic varieties},
  journal={Math. Ann.},
  volume={363},
  date={2015},
   pages={701-731},
    number={}
  }

\bib {Margulis}{book}{
 author={G.A.Margulis},
 title={Discrete Subgroup of Semisimple Lie Groups},
 publisher={Springer-Verlag},
 place={},
 journal={},
 series={Ergebnisse der Mathematik und ihrer Grenzgebiete, 3. Folge. A series of modern surveys in mathematics},
 volume={17},
 date={1991},
 number={ },
 pages={},
 }

\bib{Milne80}{book}{
    author={Milne, J.S.},
     title={\'Etale cohomology},
       volume={ },
     publisher={Princeton Press},
     place={},
      date={1980},
}

\bib{Mincev} {article} {
    author={Min\v{c}hev, Kh.P.},
    title={Strong approximation for varieties over an algebraic number field},
    journal={Dokl. Akad. Nauk BSSR},
    volume={33},
    number={1}
    date={1989},
    pages={5-8},
}

\bib{NSW}{book}{
    author={ Neukirch, J.},
    author={Schmidt, A.},
    author={Wingberg, K.},
    title={Cohomology of Number Fields},
    volume={323},
    publisher={Springer},
    series={Grundlehren},
    edition={},
    date={2000},
}

\bib{OM}{book}{
    author={O'Meara, O.T.},
    title={Introduction to Quadratic Forms},
    volume={117},
    publisher={Springer},
    series={Grundlehren},
    edition={},
    date={1973},
}

\bib{Platonov1}{article} {
author={Platonov, V.P.}
title={Strong approximation in algebraic groups and the Kneser-Tits conjecture},
journal={Dokl. Akad. Nauk BSSR},
volume={13},
number={7},
date={1969},
pages={585-587},
}

\bib{Platonov2}{article} {
author={Platonov, V.P.}
title={The strong approximation problem and the Kneser-Tits conjecture for algebraic groups},
journal={Izv. Akad. Nauk SSSR, Ser.Mat.},
volume={33},
number={6},
date={1969},
pages={1211-1219},
}

 \bib {PR}{book}{
  author={V.P. Platonov},
  author={A.S. Rapinchuk}
  title={Algebraic groups and number theory},
  publisher={Academic Press},
  place={},
  journal={ },
  series={},
  volume={},
  date={1994},
  number={ },
  pages={},
  }

\bib{Prasad} {article} {
author={Prasad, G.},
title={Strong approximation for semi-simple groups over function fields},
journal={Ann. of Math.},
volume={105},
number={3},
date={1977},
pages={553-572},
}

\bib{Sansuc} {article} {
    author={Sansuc, J.-J.},
    title={Groupe de Brauer et arithm\'etique des groupes alg\'ebriques lin\'eaires sur un corps
de nombres},
    journal={J. reine angew. Math.},
    volume={327},
    date={1981},
    pages={12-80},
}

\bib {Ser}{book}{
    author={J. P. Serre},
     title={Cohomologie Galoisienne},
     publisher={Springer},
     place={Berlin},
      journal={ },
            series={Lecture Notes in Mathematics},
    volume={5},
    date={1965},
    number={ },
     pages={},
     }

\bib{Shimura} {article} {
    author={Shimura, G.},
    title={Arithmetic of unitary groups},
    journal={Ann. of Math.},
    volume={79},
    date={1964},
    pages={369-409},
}

\bib {Sko}{book}{
    author={A. N. Skorobogatov},
     title={Torsors and rational points},
     publisher={Cambridge University Press},
     place={},
      journal={ },
            series={Cambridge Tracts in Mathematics},
    volume={144},
    date={2001},
    number={ },
     pages={},
}

\bib {Su}{article} {
    author={H. Sumihiro},
    title={Equivariant completion I},
    journal={J. Math. Kyoto Univ.},
    volume={14},
    date={1974},
    pages={1-28},
    }


\bib {Weil}{article} {
    author={A. Weil},
    title={Algebras with involutions and the classical groups},
    journal={J. Indian Math. Soc.},
    volume={24},
    date={1961},
    pages={589-623},
    }

\bib{WX}{article} {
    author={Wei, Dasheng},
    author={Xu, Fei},
    title={Integral points for groups of multiplicative type},
    journal={Adv. in Math.},
    volume={232},
    date={2013},
    Pages={36-56},
}


\end{biblist}
\end{bibdiv}

\end{document}